\newcommand{\calU}{\mathcal{U}}
\newcommand{\tu}{{\calU}}
\newcommand{\tuz}{\tu_\Z}
\newcommand{\uno}{{1\!\!\!\!1}}
\newcommand{\const}{M}
\newcommand{\R}{{\mathbb{R}}}
\newcommand{\N}{{\mathbb{N}}}
\newcommand{\Z}{{\mathbb{Z}}}
\newcommand{\Q}{{\mathbb{Q}}}
\newcommand{\uz}{\calU_\Z}
\renewcommand{\u}{\calU}
\newcommand {\buz}{{\bar{\mathcal{U}}}_\Z}
\newcommand {\muz}{{{\check{\mathcal{U}}}}_{\Z}}
\newcommand {\bu}{\bar{{\mathcal{U}}}}
\newcommand {\h}{\mathfrak h}
\newtheorem{theorem}{Theorem}
\newtheorem{remark}[theorem]{Remark}
\newtheorem{notation}[theorem]{Notation}
\newtheorem{definition}[theorem]{Definition}
\newtheorem{proposition}[theorem]{Proposition}
\newtheorem{lemma}[theorem]{Lemma}
\newtheorem{corollary}[theorem]{Corollary}
\newtheorem*{corollary*}{Corollary}
\newtheorem*{theorem*}{Theorem}
\newtheorem*{proposition*}{Proposition}
\newtheorem*{conjecture*}{Conjecture}
\newtheorem*{lemma*}{Lemma}
\numberwithin{equation}{theorem}
\newlist{abbrv}{itemize}{1}
\setlist[abbrv,1]{label=,labelwidth=1in,align=parleft,itemsep=0.1\baselineskip,leftmargin=!}
\title{Divided Power Integral forms of Affine Algebras}
\author{Margherita Paolini}
\date{July 2024}
\begin{document}

\maketitle

\begin{abstract}
In this paper we shall prove that the $\Z$-subalgebra generated by the
divided powers of the Drinfeld generators $x^{\pm}_{i,r}\;(i,r \subseteq I\times \Z) $ of an affine KacMoody algebra is an integral form.
We compare this integral form with the analogous by mean the Chevalley generators studied by Mitzman’s and Garland's (see [M,G]).
We shall prove that the integral forms coincide outside type $A_{2n}^{(2)}$ and it is strictly smaller in the latter case. Moreover if $n>1$  a completely new fact emerge, that is  the subalgebra generated by the imaginary vectors is in fact not a polynomial algebra and we describe it's structure.
Moreover to get around this problem, we introduces two other integral forms in $A_2^{(2)}$, in order to obtain the desirable polynomial property.

In this paper we shall prove that the integral subalgebra generated by the
divided powers of the Drinfeld generators of an affine KacMoody algebra is an integral form.
We compare this integral form with the analogous by mean the Chevalley generators studied by Mitzman’s and Garland's.
We shall prove that the integral forms coincide outside the twisted A type and it is strictly smaller in the latter case. Moreover if the rank of the algebra is greater than one a completely new fact emerge, that is  the subalgebra generated by the imaginary vectors is in fact not a polynomial algebra and we describe it's structure.
Moreover to get around this problem, we introduces two other integral forms in low rank case, in order to obtain the desirable polynomial property.
\end{abstract}
\tableofcontents

\section{Introduction}
Let $X^{(k)}_{\tilde n}$ be  an affine Kac-Moody algebra (see Section \ref{AffineKM}) and $\u$ its universal enveloping algebra.
The aim of this work is to give a basis over $\Z$ of the $\Z$-subalgebra of $\u$  generated by the divided powers of the Drinfeld generators that we will denote by $\uz$ (see Definition \ref{KacMoody}).
The integral forms for finite dimensional semisimple Lie algebras were first introduced by Chevalley in \cite{Ch} for the study of the Chevalley groups and of their representation theory.
Kostant (see \cite{Ko}) constructed the “divided power”-$\Z$-form for universal enveloping algebra $\u$ of the simple finite dimensional Lie algebras $X_n$, namely the $\Z$ subalgebra of $\u$ generated by the divided powers of the Chevalley generators $\{e_{i},f_{i}\mid i=\{1,\dots,n\}\} $. 
This construction it has been generalized to the untwisted
affine Kac-Moody algebras by Garland in \cite{HG}, the same result has been proved for all the twisted affine Kac-Moody algebras by Mitzman in \cite{DM} (see Theorem \ref{KMintegral}),
the two authors study the $\Z$-subalagebra, denoted by $\uz^{K.M.}$, generated by the divided powers of the Kac Moody generators (see Definition). It is well known that any affine Kac-Moody algebras admits another presentation, that is, the loop presentation.
Comparing the Kac-Moody presentation of the affine Kac-Moody algebras with its presentation as current algebra, one can notice a difference between the case of $A_{2n}^{(2)}$ and the other cases, which is at the origin of our work.
In \cite{DP} we studied the $\Z$-subalgebra $\uz$ of $\u$ generated by the divided powers of the Drinfeld generators $(x_{i,r}^{\pm})^{(k)}$ in the case when $X^{(k)}_n=A_{1}^{(1)}$ and $X^{(k)}_n=A_{2}^{(2)}$, namely the affine algebras of rank equal to one.
In the present work we prove that the $\Z$-subalgebra generated by $$\{(x_{i,r}^+)^{(k)},(x_{i,r}^-)^{(k)}\mid r\in\Z,k\in\N, i\in I\}$$ is an integral form of the enveloping algebra, we exhibit a basis generalizing the one provided in \cite{HG} and in \cite{DM}.
In fact we show that $\uz^{K.M.}=\uz$  if and only if  $X_{\tilde n}^{(k)}\neq A_{2n}^{(2)}$. 
In the case $X_{\tilde n}^{(k)}= A_{2n}^{(2)}$ it is always true that $\uz\subseteq\uz^{K.M.}$, in general we get two different $\Z$-subalgebras of $\u$, more precisely $\uz\subsetneq\uz^{K.M.}$, that is when there exists a vertex $i$ 
whose corresponding rank 1 subalgebra is not a copy of $A_{1}^{(1)}$ but is a copy of $A_{2}^{(2)}$ (see Theorem \ref{KMintegral} and Remark \ref{embeddings}).
Thus in order to complete the description of $\uz$ we need to study the case of $A_{2n}^{(2)}$ for $n>1$.
\\
The main differences that emerge in the latter case are as follows:\\
1) the positive real roots part, $\uz^{re,+}$ =($\uz\cap \u^{re,+}$), is not longer generated by the divided powers of positive real roots vectors but it is strictly contained,\\
2) studying the rank 1 cases we prove in \cite{DP} that, both in the untwisted and in the twisted case, $\uz^{im,+}$ =($\uz\cap \u^{im,+}$), where $\u^{im,+}$ is the subalgebra of $\u$ generated by the positive imaginary root vectors, is an algebras of polynomials. In the higher rank the situation changes:  it is no longer true that  $\u_{\Z}^{im,+}$ is an algebra of polynomial if $n>1$ (see Proposition \ref{notpol}), this is the reason why we decided to introduce another integral form $\muz$ adding extra elements (see Definition \ref{bhuz} and Theorem \ref{muztheorem}), in order to have the desirable polynomial property. 

The paper is organized as follows:

In Section \ref{Commutative} we introduce different $\Z$-subalgebras of the commutative polynomial algebra  $\Q[h_r\mid r\in \Z_{>0}]$, that is
$\Z^{(sym)}[h_r\mid r\in \Z_{>0}]$, 
$\Z_\lambda[\hat h_r\mid r\in \Z_{>0}]$ and $\Z^{(mix)}[h_r\mid r\in \Z_{>0}]$ (see Definitions \ref{hcappucciof} and \ref{mixdef} and Theorem \ref{bun}).
The first introduced in \cite{DP}, the second introduced by Garland \cite{HG}, both of which have already been studied and proved to be two isomorphic integral forms (see \cite{DP}, Appendix B): we recall such results in Subsection \ref{symmetric} (see Theorem \ref{bun}). The last one emerges entirely new in the study of $A_{2n}^{(2)}$, when $A_{2}^{(2)}$ is seen has copy of the first node of the Dynkin Diagram of $A_{2n}^{(2)}$ (see Section \ref{theothers}).
Subsection \ref{mixedsection} is dedicated to the study of $\Z^{(mix)}[h_r\mid r\in \Z_{>0}]$, we proved that event thought is not an algebra of polynomials (see Proposition \ref{notpol}), it is an integral form providing two kind of basis (see Definition \ref{lambdabase} and Theorems \ref{lambdabaseth} and \ref{polbasis}).\\
Section \ref{AffineKM}
is devoted to introduce notations and recollect previous results on integral forms on Affine Kac-Moody algebras. In Subsection \ref{setup} we recall the Kac-Moody presentation and the loop presentation (see Definitions \ref{KacMoody} and \ref{KacMoody2}) and the isomorphism that connect them (see Remark \ref{isomorphism}). 
In Subsection \ref{MitzmaneGarland} we recall the results of Garland and Mitzman about integral forms, then we describe the connection between the $\uz^{K.M.}$ and $\uz$ outside the $A_{2n}^{(2)}$.\\
The other section addressed the case of $A_{2n}^{(2)}$.\\
In Section \ref{theothers} we present two other integral forms of $A_{2}^{(2)}$ that we denote by $\buz$ and $\muz$ in order to distinguish to $\tuz$. 
$\buz$ is generated by the divided powers of the Drinfeld generators $x^\pm_{r}$ and by the divide powers of the elements $\frac{1}{2}X^\pm_{2r+1}$, adapting certain straightening relations already studied in the case of $\tuz$ (see for example Lemma \ref{nuoveadd}, \ref{muzubuzubo} and Proposition \ref{zeropiubarra}) we automatically deduce the structure of $\buz\cap\tu^\pm$. The heart of the problem is thus reduced to describe $\buz\cap \tu^{0,+}$. Here we introduce new elements that is: $ \bar h_{2r}$ (see Definition \ref{barradef}), then thanks to Section \ref{mixedsection} (Theorems \ref{lambdabaseth} and \ref{polbasis}), we can prove that is an integral form but not longer an algebra of polynomials. For this reason we decided to study $\muz$, that is obtained by $\buz$ adding extra elements $\check h_{r}$ (see Definition \ref{barradef}) in order to have a polynomial structure in the imaginary components.\\ 
In Subsection \ref{chapteraqd} we present the case of $A_{2n}^{(2)}$. In the first part introduce general definitions (see Definition \ref{aquattrodef}), in particular we devote care to the description of the root system and the related group of automorphisms $T$ generated by the $\tau_i$ (see Notation \ref{rootsystemadueennedue}), also we highlight the presence of certain embeddings, namely a copy of $A_{2n-2}^{(2)}$ and $A_{n-1}^{(1)}$ (see Definition \ref{injectivemaps}). Section \ref{glue} is devoted to the case of $A_{4}^{(2)}$. In the first part we devote ourselves to the study of positive real roots from which we see that the restriction of the integral form at the first node of the diagram turns out to be a copy of $\buz$ while the restriction at the second turns out to be a copy of $\uz(A_1^{(1)})$. In Section \ref{ultimaspero} we show inductively that the study of $A_{4}^{(2)}$ leads immediately to the case of $A_{2n}^{(2)}$ with $n>2$.

\section{Commutative Integral form}\label{Commutative}
This section is devoted to the study of some commutative algebras that arise in the study of the integral forms. In Subsection \ref{Commutative} we recall some results already proved in \cite{DP}. 
In Subsection \ref{mixedsection} we  study a very particular structure, that  will play a crucial role in certain integral forms in the case of $A_{2}^{(2)}$ (see Section \ref{theothers}).

\subsection{Symmetric integral forms}\label{symmetric}
Let $\Q[h_r\mid r\in \Z_{>0}]$ be the free commutative $\Q$-algebra over the elements $\{h_r\mid r\in \Z_{>0}\}$.

\begin{definition}\label{hcappucciof}
Given $a:\Z_{>0}\to\Q$ let us define the following elements: $$\sum_{k\geq 0}\hat h^{\{ a \} }_ku^k=\hat h^{\{a\}}(u)=\exp\left(\sum_{r>0}(-1)^{r-1}\frac{a_rh_r}{r}u^r\right)\in \Q[h_r\mid r>0][[u]],$$ we denote the $\Z$-algebra generated by $\{\hat h^{\{ a \} }_k\mid k>0\}$  as follows:
\begin{align}\label{eq:zdfhp}
\Z[\hat h_k^{\{a\}}\mid k>0]=
\Z^{(sym,a)}[h_k\mid k>0]
\subseteq\Q[h_r\mid r>0].\end{align}
In the special case where $a=\uno\;$, meaning the constant sequence defined by \begin{align}
    a_r=\uno_r=1 \text{ for all } r\in\Z_{>0},
\end{align}  we omit the superscript $\uno$, thus 
$\hat h^{\{\uno\}}(u)$ is simply denoted by $\hat h(u)$ and 
$\Z^{(sym,\uno)}[h^{\{\uno\}}_r\mid r>0]$ is written as $\Z^{(sym)}[h_r\mid r>0]$ and $\Z[\hat h_r\mid r>0]$.
\end{definition}
\begin{remark}
$\Z^{(sym)}[h_r\mid r>0]$  is a polynomial algebra (see \cite{DP}, Appendix B).
\end{remark}

Here we recall the connection between the integral form $\Z^{(sym)}[h_r\mid r>0]$ of $\Q[h_r\mid r>0]$ and the homomorphisms $\lambda_m$'s for $m\in \Z_{>0}$, defined by $\lambda_m(h_r)=h_{mr}$ for all $r\in \Z_{>0}$, namely we give another $\Z$-basis of $\Z^{(sym)}[h_r\mid r>0]$, basis defined in terms of the elements $\lambda_m(\hat h_k)$'s and arising from Garland's and Mitzman's description of the integral form of the affine Kac-Moody algebras (see \cite{HG},\cite{DM} and \cite{DP}, Appendix B).

\begin{remark}
 Let's  fix $m>0$, let $\uno^{(m)}$ denote the function defined by \begin{align}
\uno^{(m)}_r=\begin{cases}m&{\rm{if}}\ m\mid r\\0&{\rm{otherwise}},\end{cases}
\end{align} thus $\hat h^{\{\uno^{(m)}\}}(-u)=\lambda_m(\hat h(-u^m))$.  
\end{remark}

\begin{theorem}\label{bun}
Let us define the following elements and subsets in $\Q[h_r\mid r>0]$:

\begin{enumerate}[label=\roman*.]
    \item $b_{{\bf{k}}}=\prod_{m>0}\lambda_m(\hat h_{k_m})$ where ${\bf{k}}:\Z_{>0}\to\N$ is finitely supported; 
\item\begin{align}\label{GarlandBase}
B_{\lambda}=\left\{b_{{\bf{k}}}\mid {\bf{k}}:\Z_{>0}\to\N\,\,{\rm{is\, finitely\, supported}}\right\};\,\,\,\,\,\,\,\,\,\,\,\,\,\,\,\,\,\,\,\,\,\,\,\,\,\,\,\,\,\,\,\,\,\,\,\,\,\,\,\,\,\,\,\,\,\,\,\,\,\,\,\,\,\,\,\,\,\,\,\,\,\end{align}
\item$\Z_{\lambda}[h_r\mid r>0]=\sum_{{\bf{k}}}\Z b_{{\bf{k}}}$ is the $\Z$-submodule of $\Q[h_r\mid r>0]$ generated by $B_{\lambda}$.
\end{enumerate} Then $\Z^{(sym)}[h_r\mid r>0]$ is a free $\Z$ module with basis $B_\lambda$. Equivalently:
\begin{enumerate}[label=\roman*.]
    \item $\Z^{(sym)}[h_r\mid r>0]=\Z_\lambda[h_r\mid r>0]$,
    \item $B_\lambda$ is linearly independent.
\end{enumerate}
 \end{theorem}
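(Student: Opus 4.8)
The plan is to realise the abstract algebra $\Q[h_r\mid r>0]$ as the ring of symmetric functions and to read the statement off from the behaviour of the operations $\lambda_m$ on the monomial symmetric function basis.

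First I would fix the $\Q$-algebra isomorphism $\Q[h_r\mid r>0]\xrightarrow{\ \sim\ }\Lambda_\Q$ sending $h_r$ to the $r$-th power sum $p_r$; this is legitimate since the $p_r$ are algebraically independent generators of $\Lambda_\Q$. The defining generating function in Definition \ref{hcappucciof} is exactly Newton's identity, so under this isomorphism $\hat h_k\mapsto e_k$, the $k$-th elementary symmetric function, and hence $\Z^{(sym)}[h_r\mid r>0]=\Z[\hat h_k\mid k>0]$ is carried onto $\Lambda_\Z=\Z[e_1,e_2,\dots]$, which has the monomial symmetric functions $\{m_\lambda\}$ as a distinguished $\Z$-basis. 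The homomorphism $\lambda_m$, being $p_r\mapsto p_{mr}$, is precisely the substitution $x_i\mapsto x_i^m$; therefore $\lambda_m(\hat h_k)=e_k(x_1^m,x_2^m,\dots)=m_{(m^k)}$, the monomial function of the rectangle with $k$ parts equal to $m$. In particular each $b_{\mathbf k}=\prod_{m>0}\lambda_m(\hat h_{k_m})$ already lies in $\Z^{(sym)}$, so the inclusion $\Z_\lambda[h_r\mid r>0]\subseteq\Z^{(sym)}[h_r\mid r>0]$ is immediate.

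Next I would use the grading. Each $b_{\mathbf k}$ is homogeneous of degree $\sum_m m k_m$, and in degree $n$ the elements of $B_\lambda$ are indexed by the partitions of $n$ via $\mathbf k\mapsto\kappa(\mathbf k)$, the partition in which the part $m$ occurs with multiplicity $k_m$; this is the same index set as $\{m_\lambda\mid\lambda\vdash n\}$. It therefore suffices to prove that, degree by degree, the matrix expressing $\{b_{\mathbf k}\}$ in the basis $\{m_\lambda\}$ is unimodular. Granting this, $B_\lambda$ is simultaneously a $\Z$-basis of $\Z^{(sym)}$ (so that $\Z^{(sym)}=\Z_\lambda$) and $\Q$-linearly independent, which are exactly the two asserted equivalent conclusions. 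The heart of the matter is the combinatorial claim
\[
b_{\mathbf k}=\prod_{m>0}m_{(m^{k_m})}=m_{\kappa(\mathbf k)}+\sum_{\lambda\rhd\kappa(\mathbf k)}c_\lambda\,m_\lambda,\qquad c_\lambda\in\Z_{\geq 0},
\]
where $\rhd$ denotes the dominance order. To verify it I would multiply the factors out directly: a monomial of the product arises by choosing, for each $m$, a $k_m$-element set $S_m$, the index $i$ then carrying the exponent $\sum_{m:\,i\in S_m}m$. The ``disjoint'' choice, in which the $S_m$ are pairwise disjoint, distributes the multiset $\{\,m\text{ with multiplicity }k_m\,\}$ onto distinct indices and so yields precisely the exponent partition $\kappa(\mathbf k)$; it is moreover the unique choice producing the sorted monomial $x^{\kappa(\mathbf k)}$, since for each $m$ the set $S_m$ is forced to be $\{i:\kappa(\mathbf k)_i=m\}$, giving leading coefficient $1$. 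Any other choice forces two contributions onto a common index, i.e.\ merges parts of $\kappa(\mathbf k)$ by addition, and since merging parts strictly raises dominance the remaining $m_\lambda$ all have $\lambda\rhd\kappa(\mathbf k)$ with nonnegative integer multiplicities.

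Ordering the partitions of $n$ by dominance then makes the transition matrix unitriangular, hence of determinant $\pm1$, which finishes the argument. The main obstacle is exactly this last step: confirming that the dominance-minimal coefficient is $1$ and that no lower terms appear. The delicate point is that a part of $\kappa(\mathbf k)$ may coincide with a sum of smaller parts, but this is harmless, because realising such a part by a genuine merge strictly decreases the multiplicities of the merged values and creates a larger part, so the exponent multiset can no longer equal $\kappa(\mathbf k)$ and necessarily lands strictly higher in dominance. Everything outside this combinatorial claim is formal bookkeeping about the isomorphism with $\Lambda_\Z$ and about the grading.
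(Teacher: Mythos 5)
Your proof is correct, and it is worth noting up front that this paper does not actually prove Theorem \ref{bun}: the result is recalled from \cite{DP}, Appendix B, and the only proof of this type written out in the present paper is the analogous one for the mixed form (Theorem \ref{lambdabaseth}). Your route is genuinely different from that template. You transport everything to the ring of symmetric functions via $h_r\mapsto p_r$, under which $\hat h_k\mapsto e_k$ (the signs $(-1)^{r-1}$ in Definition \ref{hcappucciof} do make $\hat h(u)$ the series $\prod_i(1+x_iu)$), $\Z^{(sym)}[h_r\mid r>0]$ becomes $\Lambda_\Z=\Z[e_1,e_2,\dots]$, $\lambda_m$ becomes the substitution $x_i\mapsto x_i^m$, and $\lambda_m(\hat h_k)=m_{(m^k)}$; the theorem then reduces to your combinatorial claim that $b_{\mathbf k}=m_{\kappa(\mathbf k)}+\sum_{\lambda\rhd\kappa(\mathbf k)}c_\lambda m_\lambda$ with $c_\lambda\in\Z_{\ge 0}$, which you establish correctly: pairwise disjoint supports $S_m$ produce the monomials of shape $\kappa(\mathbf k)$, and uniquely so, giving leading coefficient exactly $1$, while overlapping supports merge parts, and merging strictly raises dominance, so such terms can never reproduce $\kappa(\mathbf k)$ (this correctly disposes of the worry that a part of $\kappa(\mathbf k)$ might equal a sum of smaller parts). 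Unitriangularity against the $\Z$-basis $\{m_\lambda\}$ of $\Lambda_\Z$ then yields spanning and independence simultaneously. By contrast, the scheme this paper follows for such statements (and which underlies the cited source) proves $\Z^{(sym)}[h_r\mid r>0]=\Z_\lambda[h_r\mid r>0]$ by showing the $\Z$-span of $B_\lambda$ is closed under multiplication, and then gets linear independence from a dimension count in each graded component, with integrality questions handled by arithmetic (M\"obius/convolution) criteria as in Proposition \ref{condizione} and Lemma \ref{quellochenonvolevo}. Your approach buys an explicit unimodular transition matrix with nonnegative entries, the clean identification of $\Z^{(sym)}[h_r\mid r>0]$ with $\Lambda_\Z$, and makes the $\lambda_m$-stability of Remark \ref{lambdastabilita} transparent; the paper's scheme has the advantage of carrying over to settings like $\Z^{(mix)}[h_r\mid r>0]$, where no symmetric-function model with a distinguished monomial basis is available.
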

\begin{remark}\label{lambdastabilita}
    In particular we have that $\Z[\hat h_r\mid r>0]$ is $\lambda_m$ stable for any $m\in\Z_{>0}$
\end{remark}

\begin{definition}\label{binom}
Let us consider the following $\Q$-algebra homomorphism:
\begin{align}
    &b:h_r\mapsto x,\\
    &dp:h_r\mapsto \delta_{r,1}x,
\end{align}
then 
\begin{align}
    &b:\hat h_r\mapsto \binom{x}{r}:=\frac{x(x-1)\dots (x-r+1)}{r!},\\
    &dp:\hat h_r\mapsto x^{(n)}:=\frac{x^n}{n!},
\end{align} that are called respectively
the $n$-th binomials  and $n$-th divided powers of $x$.
The $\Z$-algebras of the divided powers and of the binomials of $\Q[x_i\mid i\in I ]$ (with respect to the generators $\{x_r\mid r\in I\}$) are  respectively 
\begin{align}
    U_\Z^{(Div)}[x_i\in I]=\Z( x_i^{(r)}\mid r\in \N,i\in I),\\
    U_\Z^{(Bin)}[x_i\in I]=\Z( \binom{x_i}{n}\mid r\in \N,i\in I).
\end{align}
It is well known that $U_\Z^{(Div)}[a_i\in I]$ and $U_\Z^{(Bin)}[a_i\in I]$ are integral forms of $\Q[a_i\mid i\in I ]$ (see \cite{DP}).
\end{definition}

\subsection{A "mixed symmetric" integral form}\label{mixedsection}
Given $a:\Z_{>0}\rightarrow\Q$ we have seen in (see \cite{DP} Remark 1.26 and Propositions 1.23 and 1.24) when $\hat h^{\{a\}}(u)\in \Z[\hat h_k\mid k>0][[u]]$:
\begin{proposition}\label{condizione}
 Given a sequence $a:\Z_{>0}\rightarrow \Q$, then
 \begin{align}\label{eq:condizione}
     \hat h_{k}^{\{a\}}\in \Z[\hat h_l\mid l>0] \; \forall k>0\Leftrightarrow p^s\vert (a_{mp^{s}}-a_{mp^{s-1}}) \,\forall m,p,s\in \Z_{>0} \text{ with } p \text{ prime and } (m,p)=1.
 \end{align}
\end{proposition}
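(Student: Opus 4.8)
The plan is to pass to the standard model $\Q[h_r\mid r>0]\cong\Lambda_\Q$ in which $h_r$ is the power sum $p_r$, so that $\hat h_k$ is the elementary symmetric function $e_k$, the ring $\Z[\hat h_l\mid l>0]$ is the ring of symmetric functions, and $h_m$ is the monomial symmetric function $m_{(m)}$, hence one of the elements of the monomial $\Z$-basis of $\Z[\hat h_l\mid l>0]$. The organising principle is that $a\mapsto\hat h^{\{a\}}(u)$ converts addition into multiplication: since $\log\hat h^{\{a\}}(u)=\sum_{r>0}(-1)^{r-1}\frac{a_rh_r}{r}u^r$ is $\Q$-linear in $a$, one has $\hat h^{\{a+b\}}=\hat h^{\{a\}}\hat h^{\{b\}}$, and $\hat h^{\{a\}}(u)\in 1+u\,\Q[h_r][[u]]$ always has constant term $1$.

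The main device is the necklace (M\"obius) decomposition $a=\sum_{m>0}c_m\uno^{(m)}$, where $c_m=\frac1m\sum_{d\mid m}\mu(m/d)a_d\in\Q$ and $\uno^{(m)}$ is the indicator sequence of the preceding remark; equivalently $a_r=\sum_{m\mid r}m\,c_m$. Exponentiating the corresponding identity of logarithms yields the factorisation $\hat h^{\{a\}}(u)=\prod_{m>0}\bigl(\hat h^{\{\uno^{(m)}\}}(u)\bigr)^{c_m}$, which converges $u$-adically because $\hat h^{\{\uno^{(m)}\}}(u)\in 1+u^m\,\Z[\hat h_l][[u]]$; indeed, by the relation $\hat h^{\{\uno^{(m)}\}}(-u)=\lambda_m(\hat h(-u^m))$ of the remark, its coefficients are, up to sign, the elements $\lambda_m(\hat h_j)$, which lie in $\Z[\hat h_l]$ by Theorem \ref{bun}. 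I will invoke the classical equivalence between Gauss (necklace) congruences and Kummer--Dwork congruences: the condition $p^s\mid(a_{mp^s}-a_{mp^{s-1}})$ for all $p,s,m$ holds if and only if $c_m\in\Z$ for every $m$. Granting this, $(\Leftarrow)$ is immediate: if all $c_m\in\Z$, each factor $\bigl(\hat h^{\{\uno^{(m)}\}}\bigr)^{c_m}$ lies in the multiplicative group $1+u\,\Z[\hat h_l][[u]]$, and so does their convergent product, so $\hat h^{\{a\}}(u)\in\Z[\hat h_l][[u]]$ and in particular $\hat h_k^{\{a\}}\in\Z[\hat h_l]$ for all $k$.

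For $(\Rightarrow)$ I would show, by induction on $m$, that $c_m\in\Z$, which by the same equivalence gives the stated congruences. Assume $c_1,\dots,c_{m-1}\in\Z$; then $\prod_{j<m}\bigl(\hat h^{\{\uno^{(j)}\}}\bigr)^{c_j}$ is a unit of $1+u\,\Z[\hat h_l][[u]]$, so multiplying the (assumed integral) series $\hat h^{\{a\}}(u)$ by its inverse leaves $G(u):=\prod_{j\ge m}\bigl(\hat h^{\{\uno^{(j)}\}}\bigr)^{c_j}\in\Z[\hat h_l][[u]]$. Since each factor with $j>m$ starts in degree $>m$, the coefficient of $u^m$ in $G$ comes only from the $j=m$ factor and equals $c_m$ times the $u^m$-coefficient of $\hat h^{\{\uno^{(m)}\}}$, namely $\pm c_m\lambda_m(\hat h_1)=\pm c_m h_m$. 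Integrality of $G$ forces $c_m h_m\in\Z[\hat h_l]$, and since $h_m=m_{(m)}$ is primitive (a member of the monomial $\Z$-basis) we conclude $c_m\in\Z$, completing the induction.

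The genuine content, and the step I expect to demand the real work, is the number-theoretic equivalence between the Kummer--Dwork congruences in the statement and the integrality of all necklace coefficients $c_m$. This localises at a single prime $p$: writing $m=m_0p^s$ with $p\nmid m_0$ and inducting on $s$, one compares $v_p(c_m)$ with the differences $a_{m_0p^s}-a_{m_0p^{s-1}}$, and this is precisely the input recorded in \cite{DP}. Everything else---the multiplicativity in $a$, the $u$-adic convergence of the product, and the primitivity of $h_m$---is formal, so I would establish the equivalence first and then read off both implications from the factorisation above.
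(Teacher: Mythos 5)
The paper never proves this proposition: it is quoted from \cite{DP} (Remark 1.26 and Propositions 1.23, 1.24), so your proposal has to be judged on its own merits rather than against an argument in the text. Its architecture is sound, and the parts you actually prove are correct: the identification $\hat h_k=e_k$, $\Z[\hat h_l\mid l>0]=\Lambda_\Z$, $h_m=p_m=m_{(m)}$; the multiplicativity $\hat h^{\{a+b\}}=\hat h^{\{a\}}\hat h^{\{b\}}$; the $u$-adically convergent factorisation $\hat h^{\{a\}}(u)=\prod_{m>0}\bigl(\hat h^{\{\uno^{(m)}\}}(u)\bigr)^{c_m}$ with $c_m=\frac{1}{m}\sum_{d\mid m}\mu(m/d)a_d$; the implication that integrality of all $c_m$ forces $\hat h^{\{a\}}_k\in\Z[\hat h_l\mid l>0]$ for all $k$; and the inductive extraction of $c_m$ from the $u^m$-coefficient, using that $h_m$ is a member of the monomial $\Z$-basis of $\Lambda_\Z$. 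Together these yield the clean equivalence $\hat h^{\{a\}}_k\in\Z[\hat h_l\mid l>0]\ \forall k\iff c_m\in\Z\ \forall m$, which is exactly the form invoked later (as \cite{DP}, Proposition 1.24) in Lemma \ref{quellochenonvolevo}.

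The genuine gap is the step you defer to a ``classical equivalence'': for sequences $a:\Z_{>0}\to\Q$ (the stated generality) it is false that the congruences $p^s\mid(a_{mp^s}-a_{mp^{s-1}})$ imply $c_m\in\Z$ for all $m$. The constant sequence $a_r=\frac{1}{3}$ satisfies every congruence (all differences vanish), yet $c_1=\frac{1}{3}$ and $\hat h_1^{\{a\}}=\frac{1}{3}h_1\notin\Z[\hat h_l\mid l>0]$; the same example shows the proposition as literally stated is false, so the intended hypothesis must be that $a$ is integer-valued (as it is in every application in the paper, and as in \cite{DP}). The obstruction is visible in your own localisation sketch: for $m=m_0p^s$ with $p\nmid m_0$, $s\ge1$, one has $mc_m=\sum_{d_0\mid m_0}\mu(m_0/d_0)\bigl(a_{d_0p^s}-a_{d_0p^{s-1}}\bigr)$, so the congruences control $v_p(c_m)$ only at the primes $p$ dividing $m$; at primes not dividing $m$ nothing bounds the denominators of the $a_d$, and only integrality of $a$ closes that hole. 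Once $a:\Z_{>0}\to\Z$ is assumed, this identity (each summand divisible by $p^s=p^{v_p(m)}$), combined over the primes dividing $m$, gives $m\mid mc_m$, i.e.\ $c_m\in\Z$; conversely $a_{mp^s}-a_{mp^{s-1}}=p^s\sum_{d_0\mid m}d_0c_{d_0p^s}$ recovers the congruences from necklace integrality. With that hypothesis added, your proof is complete and correct.
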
 
But what happens if $\hat h^{\{a\}}(u)\not\in \Z[\hat h_k\mid k>0][[u]]$ and we consider the $\Z$-algebra generated by $\{\hat h_k,\hat h^{\{a\}}_k\mid k>0\}$? Is it still an integral form of $\Q[h_r\mid r>0]$?  Is it still an algebra of polynomials?  
Here we answer to the previous questions in the case when $a=\frac{1}{2}\uno^{(2)} $ and we describe the structure of this algebra (that we will denote by $\Z^{(mix)}[h_r\mid r>0]$) in two different ways in Theorems \ref{lambdabaseth} and \ref{polbasis}. 
\begin{definition}\label{barradef}
    Using the notations introduced in Notation \ref{hcappucciof}
    let us set $\bar h(u)=\sum_{k\ge0} \bar h_{k}u^k=\hat h^{\frac{1}{2}\uno^{(2)}}(u)$ and $\check h(u)=\sum_{k\ge0} \check h_{k}u^k=\hat h^{\frac{1}{2}\uno}(u)$.
\end{definition}
\begin{definition}\label{mixdef}
    Define $\Z^{(mix)}[h_r\mid r>0]$ to be the $\Z$-subalgebra of $\Q[h_r\mid r>0]$ generated by $\{\hat h_{r},\bar h_{r}\mid r>0\}$.
\end{definition}

\begin{remark}\label{rembarceckbo}
    $\bar h(u)\in \Q[h_{2r}\mid r>0]$ and $\bar h_{2r+1}=0$ $\forall r>0$. More precisely 
    \begin{align}
        \Z[\bar h_{2r}\mid k>0]=\Z^{(sym)}[\frac{h_{2r}}{2}\mid r>0]
    \end{align} and \begin{align}
        \bar h(u^2)=\lambda_2(\hat h^{\frac{1}{2}}(u^2))=\lambda_2(\check h(u^2))=\check h(u) \check h(-u).
    \end{align}
\end{remark}

\begin{lemma}\label{hatbarincec}
 \begin{enumerate}
     \item $\hat h(u)\not \in \Z[\bar h_{2r}\mid r>0],$
     \item $\bar h(u)\not \in \Z[\hat h_{r}\mid r>0],$
     \item $\hat h(u), \bar h(u)\in\Z [\check h_k\mid k>0]=\Z^{(sym)}[\frac{h_r}{2}\mid r>0],$
     \item $\Z^{(mix)}[h_r\mid r>0]\subseteq \Z [\check h_{k}\mid r>0].$
 \end{enumerate}   
\end{lemma}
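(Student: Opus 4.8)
The plan is to handle the two positive statements (parts (3) and (4)) and the two negative statements (parts (1) and (2)) by completely different means: the inclusions follow from closed generating-function identities, while the non-inclusions require exhibiting explicit obstructions. The engine for everything is the multiplicativity built into Definition \ref{hcappucciof}: since the exponentials add, one has $\hat h^{\{a\}}(u)\,\hat h^{\{b\}}(u)=\hat h^{\{a+b\}}(u)$. Taking $a=b=\frac12\uno$ gives $\check h(u)^2=\hat h(u)$, and a short sign computation (cf. Remark \ref{rembarceckbo}) gives $\check h(u)\,\check h(-u)=\bar h(u)$; these two identities are what I would rely on throughout.

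For part (3) I would simply compare coefficients in these identities. Writing $\check h_0=1$, one reads off $\hat h_k=\sum_{i+j=k}\check h_i\check h_j$ and $\bar h_k=\sum_{i+j=k}(-1)^j\check h_i\check h_j$, both of which are manifestly $\Z$-polynomials in $\{\check h_l\mid l>0\}$; hence $\hat h(u),\bar h(u)\in\Z[\check h_l\mid l>0][[u]]$. The identification $\Z[\check h_k\mid k>0]=\Z^{(sym)}[\frac{h_r}{2}\mid r>0]$ is then just unwinding notation, since $\check h(u)=\hat h^{\{\frac12\uno\}}(u)$ is literally the series $\hat h(u)$ with each $h_r$ replaced by $h_r/2$. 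Part (4) is then immediate, because by Definition \ref{mixdef} the algebra $\Z^{(mix)}[h_r\mid r>0]$ is generated by the $\hat h_r$ and $\bar h_r$, all of which lie in $\Z[\check h_k\mid k>0]$ by part (3).

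For part (1) I would use an even/odd support obstruction. By Remark \ref{rembarceckbo} we have $\Z[\bar h_{2r}\mid r>0]=\Z^{(sym)}[\frac{h_{2r}}{2}\mid r>0]\subseteq\Q[h_{2r}\mid r>0]$, a subring of the free algebra involving only the even-indexed generators. On the other hand the coefficient $\hat h_1=h_1$ involves the odd-indexed generator $h_1$, which is algebraically independent from $\{h_{2r}\mid r>0\}$; hence $\hat h_1\notin\Z[\bar h_{2r}\mid r>0]$, and a fortiori $\hat h(u)\notin\Z[\bar h_{2r}\mid r>0][[u]]$.

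Part (2) is the real point, and I expect it to be the main obstacle, because $\Z[\hat h_r\mid r>0]$ already contains genuine fractions (for instance $\hat h_2=\frac12(h_1^2-h_2)$), so a crude "denominators" argument cannot work. The clean tool is the binomial specialization $b$ of Definition \ref{binom}: it is the $\Q$-algebra map $h_r\mapsto x$ with $b(\hat h_r)=\binom{x}{r}$, so $b\big(\Z[\hat h_r\mid r>0]\big)\subseteq U_\Z^{(Bin)}[x]$, the ring of integer-valued polynomials. Applying $b$ to the candidate coefficient $\bar h_2=-\frac{h_2}{2}$ gives $b(\bar h_2)=-\frac{x}{2}$, which takes the value $-\frac12\notin\Z$ at $x=1$ and is therefore not integer-valued; hence $\bar h_2\notin\Z[\hat h_r\mid r>0]$ and so $\bar h(u)\notin\Z[\hat h_r\mid r>0][[u]]$. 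Equivalently, $\bar h_2-\hat h_2=-\frac12\hat h_1^2$, and $\frac{x^2}{2}$ fails to be integer-valued, which isolates the $2$-integrality obstruction at the heart of the statement.
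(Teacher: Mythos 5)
Your proof is correct. For parts (1), (3) and (4) you follow essentially the paper's own route: part (1) is the even-support obstruction of Remark \ref{rembarceckbo} (you merely make the witness $\hat h_1=h_1$ explicit), and parts (3)--(4) rest on the identities $\hat h(u)=\check h(u)^2$ and $\check h(u)\check h(-u)=\bar h(u)$, which is exactly what the paper invokes via Remark \ref{rembarceckbo} and the $\lambda$-stability of Remark \ref{lambdastabilita} (the paper writes $\bar h(u^2)$ for what, reading the definitions literally, is your $\bar h(u)$ --- a re-indexing discrepancy, not a mathematical one; likewise your $\bar h_2=-\tfrac{1}{2}h_2$ versus the paper's $\tfrac{1}{2}h_2$ is a harmless sign-convention difference). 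Where you genuinely diverge is part (2). The paper disposes of it in one line by citing Proposition \ref{condizione}, the arithmetic iff-criterion imported from \cite{DP}: the sequence $\tfrac{1}{2}\uno^{(2)}$ fails the divisibility condition \eqref{eq:condizione} at $(m,p,s)=(1,2,1)$, since $2\nmid a_2-a_1=1$. You instead use the binomial specialization $b$ of Definition \ref{binom}: since $b(\hat h_r)=\binom{x}{r}$, the image $b\bigl(\Z[\hat h_r\mid r>0]\bigr)$ lands in the ring of integer-valued polynomials, while $b(\bar h_2)=\mp\tfrac{x}{2}$ takes the value $\mp\tfrac{1}{2}$ at $x=1$; hence $\bar h_2\notin\Z[\hat h_r\mid r>0]$. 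This is a genuinely different argument, and a more self-contained one: it uses only objects defined in this paper and isolates the $2$-integrality obstruction concretely, whereas the paper's one-line citation leans on the full strength of a criterion proved elsewhere. The trade-off is generality: Proposition \ref{condizione} settles membership for every sequence $a$ at once, while your method requires producing an explicit non-integer-valued witness coefficient in each instance (here $\bar h_2$, or equivalently $\bar h_2-\hat h_2=-\tfrac{1}{2}\hat h_1^2$, suffices).
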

\begin{proof}
    1. follows directly from Remark \ref{rembarceckbo}. 2) follows form Proposition \ref{condizione}, since $\frac{1}{2}\uno^{(2)}$ does not satisfy condition \eqref{eq:condizione}. 3. and 4. follow directly from Definitions \ref{barradef} and \ref{mixdef} and Remarks \ref{lambdastabilita} and \ref{rembarceckbo}.
\end{proof}

\begin{remark}\label{chechdiverso}
    Let $V$ be the $\Q$-vector subspace of $\Q[h_r\mid r>0]$ with basis $\{ h_r\mid r>0\}$. Then 
    \begin{align}
        &\Z^{(mix)}[h_r\mid r>0]\cap V=\Z\langle h_{2r-1},\frac{h_{2r}}{2}\mid r>0\rangle,\\
        &\Z[\check h_r\mid r>0]\cap V=\Z\langle \frac{h_{r}}{2}\mid r>0\rangle.
    \end{align}
    Thus, $\Z^{(mix)}[h_r\mid r>0]\subsetneq\Z[\check h_{r}\mid r>0]$.
\end{remark}

\begin{proposition}\label{notpol}
$\Z^{(mix)}[h_r\mid r>0]$ is not a polynomial algebra in homogeneous variable. Specifically, there is no sequence $a:\Z_{>0}\rightarrow \Q$ such that  $\Z^{(mix)}[h_r\mid r>0]=\Z[\hat h^{\{a\}}_k\mid k\ge 0]$.
\end{proposition}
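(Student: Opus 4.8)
The plan is to use the weight grading on $\Q[h_r\mid r>0]$ in which $\deg h_r=r$ together with the module of indecomposables. Because the coefficient of $u^k$ in any series $\exp\bigl(\sum_r c_r h_r u^r\bigr)$ is a sum of monomials of total weight $k$, each generator $\hat h_r,\bar h_r$ of $A:=\Z^{(mix)}[h_r\mid r>0]$ is homogeneous of weight $r$; hence $A=\bigoplus_{k\ge 0}A_k$ is a connected graded $\Z$-algebra with $A_0=\Z$. I would then study $Q:=A_+/A_+^2$, where $A_+=\bigoplus_{k>0}A_k$, which inherits a grading $Q=\bigoplus_{k>0}Q_k$. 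Two elementary facts about connected graded $\Z$-algebras drive everything: if $A$ is a polynomial algebra on homogeneous generators then $Q$ is a free $\Z$-module (its basis being the classes of the generators); and, separately, if $A$ is generated by a single homogeneous element in each weight — as is every algebra $\Z[\hat h_k^{\{a\}}\mid k\ge 0]$ — then each graded piece $Q_k$ is a cyclic $\Z$-module.

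The core of the argument is an explicit computation of $Q_2$. From Definition \ref{barradef} one reads off $\hat h_1=h_1$, $\bar h_1=0$, $\hat h_2=\tfrac12(h_1^2-h_2)$ and $\bar h_2=-\tfrac12 h_2$, whence $A_1=\Z h_1$ and $A_2=\Z\langle \hat h_2,\bar h_2,\hat h_1^2\rangle$. Working in the $\Q$-basis $\{h_1^2,h_2\}$ of the weight-$2$ component, the cancellation $\hat h_2-\bar h_2=\tfrac12 h_1^2$ shows that $\tfrac12 h_1^2\in A$, and a short lattice computation then gives $A_2=\tfrac12\Z h_1^2\oplus\tfrac12\Z h_2$ (its purely linear part $\tfrac12\Z h_2$ agreeing with Remark \ref{chechdiverso}). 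On the other hand the only positive-weight factorisation of the weight $2$ is $1+1$, and since $A_1=\Z h_1$ this yields $(A_+^2)_2=\Z h_1^2=2\cdot\tfrac12\Z h_1^2$.

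Combining the two computations gives
\begin{align}
Q_2=A_2/(A_+^2)_2\cong \Z/2\Z\oplus\Z,
\end{align}
where the torsion summand $\Z/2\Z$ is produced precisely by the half-integral element $\tfrac12 h_1^2$ and the free summand by $\tfrac12 h_2$. Since $Q_2$ is not torsion-free, $A$ cannot be a polynomial algebra on homogeneous variables, which proves the first assertion. Since $\Z/2\Z\oplus\Z$ is also not cyclic, $Q_2$ cannot be the weight-$2$ piece of the indecomposables of any algebra generated by one homogeneous element per weight; this rules out $A=\Z[\hat h_k^{\{a\}}\mid k\ge 0]$ for every sequence $a$ at once, with no need to discuss algebraic independence of the $\hat h_k^{\{a\}}$, giving the "specifically" clause.

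The step I expect to require the most care is the identification $A_2=\tfrac12\Z h_1^2\oplus\tfrac12\Z h_2$ — precisely the point that $\tfrac12 h_1^2$ genuinely lies in $A$, while nothing finer than $\Z h_1^2$ is forced into $(A_+^2)_2$; this is the feature that creates the $2$-torsion. The two general facts about $Q$ can be isolated as a one-line lemma on connected graded $\Z$-algebras, after which the contradiction is immediate.
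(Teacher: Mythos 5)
Your proposal is correct, and its engine is the same as the paper's: the weight grading together with the computations $A_1=\Z h_1$ and $A_2=\tfrac12\Z h_1^2\oplus\tfrac12\Z h_2$, writing $A=\Z^{(mix)}[h_r\mid r>0]$ as you do. (Your signs $\hat h_2=\tfrac12(h_1^2-h_2)$ and $\bar h_2=-\tfrac12 h_2$ are the ones that actually follow from Definition \ref{hcappucciof}; the paper's proof has a harmless sign slip here, and the resulting lattice is the same either way.) Where you diverge is in how the contradiction is extracted. The paper stops at the lattice computation and observes that $h_1^2\in 2A_2$, hence $h_1^2$ cannot belong to any $\Z$-basis of $A_2$, whereas a polynomial structure on homogeneous variables would force the degree-one variable to be $\pm h_1$ and would place $h_1^2$ in the monomial basis of $A_2$. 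You instead pass to the indecomposables $Q=A_+/A_+^2$ and compute $Q_2\cong\Z/2\Z\oplus\Z$: the torsion rules out a polynomial structure, and the failure of cyclicity rules out any algebra generated by a single homogeneous element in each weight. This second point is where your packaging genuinely buys something. The paper's argument, as written, directly addresses only the polynomial case, and reducing the clause ``$A\neq\Z[\hat h^{\{a\}}_k\mid k\ge 0]$ for every $a$'' to that case requires the omitted observation that $\Z[\hat h^{\{a\}}_k\mid k\ge 0]=A$ would force all $a_r\neq0$ (since the $\Q$-span must be all of $\Q[h_r\mid r>0]$) and hence algebraic independence of the $\hat h^{\{a\}}_k$. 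Your cyclicity criterion applies to $\Z[\hat h^{\{a\}}_k\mid k\ge 0]$ whether or not its generators are independent, so the ``specifically'' clause comes for free; the cost is the (elementary, correctly stated and correctly applied) lemma on connected graded $\Z$-algebras.
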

\begin{proof}
    $\Z^{(mix)}[h_r\mid r>0]$ is a graded algebra with $\text{deg}(h_r)=r$ for all $r>0$, that is \begin{align}
        \Z^{(mix)}[h_r\mid r>0]=\bigoplus_{d\ge 0}\Z^{(mix)}[h_r\mid r>0]_d.
    \end{align} We have $\Z^{(mix)}[h_r\mid r>0]_1=\Z h_1$ and \begin{align}
        \Z^{(mix)}[h_r\mid r>0]_2=\Z\langle h_1^2, \hat h_2=\frac{1}{2}(h_1^2+h_2), \bar h_2=\frac{1}{2} h_2\rangle=\Z\langle \frac{1}{2} h_1^2,\frac{1}{2} h_2\rangle
    \end{align} which implies that $h_1^2$ does not belong to any $\Z$-basis of $\Z^{(mix)}[h_r\mid r>0]_2$. 
\end{proof}
Even though $\Z^{(mix)}[h_r\mid r>0]$ is not a polynomial algebra we aim to prove that it is though an integral form of $\Q[h_r\mid r>0]$, by exhibiting a $\lambda$-Garland type $\Z$-basis of $\Z^{(mix)}[h_r\mid r>0]$. Additionally we will provide a polynomial-like basis of this $\Z$-algebra.

Recall that \begin{align}
    B_\lambda=\{ b_k=\prod_{m>0} \lambda_m (\hat h_{k_m})\vert k:\Z_{>0}\rightarrow \N \text{ is finitely supported}\}
\end{align} is a basis of $\Z[\hat h_k\mid k>0]$. 
\begin{definition}\label{lambdabase}
As in Theorem \ref{bun} we define the following elements and set:
\begin{align}
        &\bullet b_k'=\prod_{m>0, m \text{ odd } } \lambda_m (\hat h_{k_m})\prod_{m>0, m \text{ even } }  \lambda_m (\check h_{k_m}),\, \text{ for }k:\Z_{>0}\rightarrow\N \text{  finitely supported}, \\
        &\bullet B_\lambda'=\{ b_k'\mid k:\Z_{>0}\rightarrow\N \text{ is finitely supported}\},\\
        &\bullet\Z_\lambda'[h_r\mid r>0]= \Z\text{-linear span of } B_\lambda'.
    \end{align}
\end{definition}
\begin{remark}
\begin{enumerate}[label=\roman*.]
    \item  $b_k'\in \Z^{(mix)}[h_r\mid r>0]$, 
    \item $\hat h_k, \bar h_k\in \Z_\lambda'[h_r\mid r>0]$ $\forall k\ge0$: indeed $\hat h_k=\lambda_1 (\hat h_k)$ and again $\lambda_2(\check h_k)=\bar h_{2k}$.
\end{enumerate}
\end{remark}
\begin{theorem}\label{lambdabaseth}
    $\Z^{(mix)}[h_r\mid r>0]=\Z_\lambda'[h_r\mid r>0]$ is  and integral form of $\Q[h_r\mid r>0]$ and $B'_\lambda$ is $\Z$-basis of $\Z^{(mix)}[h_r\mid r>0]$. 
\end{theorem}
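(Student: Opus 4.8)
The plan is to reduce the whole statement to the single assertion that $\Z_\lambda'[h_r\mid r>0]$ is closed under multiplication; everything else then follows formally. I work inside the two symmetric integral forms governed by Theorem \ref{bun}: the form $\Z[\hat h_k\mid k>0]=\Z^{(sym)}[h_r\mid r>0]$, free with Garland basis $B_\lambda=\{\prod_m\lambda_m(\hat h_{k_m})\}$, and $R:=\Z[\check h_k\mid k>0]$, which by Lemma \ref{hatbarincec}.3 equals $\Z^{(sym)}[\tfrac{h_r}{2}\mid r>0]$ and hence, by Theorem \ref{bun} applied in the variables $\tfrac{h_r}{2}$, is $\Z$-free with basis $\check B_\lambda=\{\prod_m\lambda_m(\check h_{k_m})\}$; in particular $R_d$ is free of rank $p(d):=\dim_\Q\Q[h_r\mid r>0]_d$ in each degree. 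By Lemma \ref{hatbarincec}.4 we have $\Z^{(mix)}[h_r\mid r>0]\subseteq R$, so every computation takes place inside $R$. Three structural facts are used repeatedly: $\Z[\hat h_k]$ and $R$ are $\lambda_m$-stable for all $m$, with $\lambda_m\circ\lambda_{m'}=\lambda_{mm'}$ (Remark \ref{lambdastabilita}); $\check h(u)^2=\hat h(u)$ (Definitions \ref{hcappucciof} and \ref{barradef}), i.e. $\hat h_c=2\check h_c+\sum_{i+j=c,\,i,j\ge1}\check h_i\check h_j$; and each Garland basis has integral structure constants.

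\textbf{Reduction.} By part (i) of the Remark following Definition \ref{lambdabase}, every $b_{\mathbf k}'$ lies in $\Z^{(mix)}[h_r\mid r>0]$, so $\Z_\lambda'[h_r\mid r>0]\subseteq\Z^{(mix)}[h_r\mid r>0]$; by part (ii) the generators $\hat h_k=\lambda_1(\hat h_k)$ and $\bar h_{2k}=\lambda_2(\check h_k)$ (with $\bar h_{2k+1}=0$) all lie in $\Z_\lambda'[h_r\mid r>0]$. Hence, once $\Z_\lambda'$ is known to be a subalgebra, it must contain all of $\Z^{(mix)}$, giving $\Z^{(mix)}=\Z_\lambda'$. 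Granting this, $B_\lambda'$ spans $\Z^{(mix)}$ over $\Z$; moreover $\Z^{(mix)}\otimes_\Z\Q=\Q[h_r\mid r>0]$ (since $\hat h_k\in\Z^{(mix)}$ and $\Q[\hat h_k]=\Q[h_r]$), so the graded piece $\Z^{(mix)}_d$ is a submodule of the free module $R_d$ of the \emph{full} rank $p(d)$, hence itself free of rank $p(d)$. As $B_\lambda'$ has at most $p(d)$ elements in degree $d$ (one per $\mathbf k$ with $\sum_m mk_m=d$), a spanning set of cardinality $\le p(d)$ of a free $\Z$-module of rank $p(d)$ is forced to be a basis. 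Thus $B_\lambda'$ is automatically a $\Z$-basis and $\Z^{(mix)}$ a free integral form: crucially, no separate linear-independence argument is needed.

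\textbf{Main obstacle: closure.} Everything therefore rests on showing $\Z_\lambda'$ is multiplicatively closed, which is the crux of the proof. It suffices to prove stability under multiplication by each generator, and so, after grouping the factors of a product $b_{\mathbf k}'b_{\mathbf k'}'$ by their index $m$, to straighten a product of two $\lambda$-factors sharing an index. For a common odd $m$ I write $\lambda_m(\hat h_a)\lambda_m(\hat h_b)=\lambda_m(\hat h_a\hat h_b)$, expand $\hat h_a\hat h_b=\sum_{\mathbf n}c_{\mathbf n}\prod_{m'}\lambda_{m'}(\hat h_{n_{m'}})$ in the basis $B_\lambda$ with $c_{\mathbf n}\in\Z$, and apply $\lambda_m$; using $\lambda_m\lambda_{m'}=\lambda_{mm'}$ this produces $\sum_{\mathbf n}c_{\mathbf n}\prod_{m'}\lambda_{mm'}(\hat h_{n_{m'}})$, a $\Z$-combination of products with pairwise distinct indices. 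The factors that land at an \emph{even} index $mm'$ still carry an $\hat h$ and must be converted to $\check h$-form: here $\hat h_c=2\check h_c+\sum\check h_i\check h_j$ together with the $\lambda$-stability of $R$ rewrites $\lambda_{mm'}(\hat h_c)$ as a $\Z$-combination of products of $\check h$-factors at even indices. A common even index is straightened identically inside $R$ via $\check B_\lambda$. Each such move either lowers the number of $\lambda$-factors or strictly raises the indices involved---both bounded in the fixed total degree---so the procedure terminates in a $\Z$-combination of genuine basis elements $b_{\boldsymbol\ell}'$, with odd indices carrying $\hat h$ and even indices carrying $\check h$. Integrality persists throughout because all expansions use the integral structure constants of $B_\lambda$ and $\check B_\lambda$ and the maps $\lambda_m$ preserve the lattices $\Z[\hat h_k]$ and $R$.

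\textbf{Conclusion.} Closure yields $\Z^{(mix)}=\Z_\lambda'$, whereupon the reduction above delivers at once that $B_\lambda'$ is a $\Z$-basis and that $\Z^{(mix)}[h_r\mid r>0]$ is an integral form of $\Q[h_r\mid r>0]$, as claimed. The genuinely delicate point---and the reason this case departs from the purely symmetric one---is the parity bookkeeping in the closure step: applying $\lambda_2$ carries an odd index to an even one and thereby forces the repeated $\hat h\to\check h$ conversions, and one must verify that the collisions these create at even indices are reabsorbed without destroying either termination or integrality. This is exactly where the factor $2$ relating $\hat h$ and $\check h$, and the strict inclusion $\Z^{(mix)}\subsetneq R$ recorded in Remark \ref{chechdiverso}, come into play.
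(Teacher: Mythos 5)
Your proposal is correct and follows essentially the same route as the paper: both reduce the theorem to multiplicative closure of $\Z_\lambda'[h_r\mid r>0]$, prove closure using the Garland bases of $\Z[\hat h_k\mid k>0]$ and $\Z[\check h_k\mid k>0]$ together with $\lambda_m$-stability and the conversion of even-indexed $\hat h$-factors into $\check h$-factors (the paper packages this in one shot via $\lambda_{2m}(\hat h(u))\in\Z[\bar h_{2r}\mid r>0][[u]]$ and the splitting of each $b_{\mathbf k}$ into an odd part times an element of $\Z[\bar h_{2k}\mid k>0]$, where you instead run an explicit recursive straightening with a termination argument), and both conclude by the same degree-wise count against $\dim_\Q \Q[h_r\mid r>0]_d$. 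Your module-theoretic phrasing of the last step (a spanning set of cardinality at most the rank of a free $\Z$-module is forced to be a basis) is just a cleaner wording of the paper's ``dimension considerations.''
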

\begin{proof}
    Thanks to previous remark, in order to prove that $\Z^{(mix)}[h_r\mid r>0]=\Z_\lambda'[h_r\mid r>0]$ it is enough to show that $\Z_\lambda'[h_r\mid r>0]$ is closed by multiplication. Notice that $\forall m>0$ $\lambda_{2m}(\hat h(u))\in \Z[\bar h_{2r}\mid r>0][[u]]$ since $\hat h(u)\in \Z [\check h_k\mid k>0] $ and $\lambda_2 (\check h_k)=\bar h_{2k}$. Then the fact that $\{b_k\}$ is a $\Z$-basis of $\Z[\hat h_k\mid k>0]$ implies the following facts, which imply the claim:\begin{enumerate}[label=\roman*.]
        \item $\prod_{m>0, m \text{ is even}}\{ \lambda_m(\check h_k)\mid k:\Z_{>0}\rightarrow \N\text{ is finitely supported }\}$  is a $\Z$-basis of $\Z[\bar h_{2k}\mid k>0 ]$;
        \item $b_k={\prod_{m>0, m \text{ is odd}}}\lambda_m(\hat h_k)\cdot b_k^{\text{even}}$ with $b_k^{\text{even}}\in \Z[\bar h_{2k}\mid k>0]$.
        \item $b'_{k'}, b'_{k''}=\prod_{m>0, m\text{ is odd} } \lambda_{m}(\hat h_{k'_m})\lambda_{m}(\hat h_{k''_m})\cdot \bar b' \bar b''$ with $\bar b',\bar b'' \in \Z[\bar h_{2k}\mid k>0]$ is a $\Z$-linear combination of elements of the form $\prod_{m>0, m\text{ is odd}}\lambda_{m}(\hat h_{k_m})\bar b$ with $\bar b\in \Z[\bar h_{2k}\mid k>0]$.
    \end{enumerate} Finally it is obvious that the $\Q$-span of $\Z'_\lambda[h_r\mid r>0]$ is $\Q[h_r\mid r>0]$ and the linear independence of $B'_\lambda$ now follows by dimension considerations:
    \begin{align}
        &\#\{b_k'\mid \text{ deg}(b_k')=d\}=
        \#\{k:\Z_{+}\rightarrow\N\mid \sum_{m>0} mk_m=d\}=\\
        &\# \{b_k\mid \text{ deg}(b_k)=d\}= \text{dim} \Q[h_r\mid r>0]_d.
    \end{align}
\end{proof}
\begin{corollary}
    $\Z^{(mix)}[h_r\mid r>0]$ is a $\Z[\bar h_{2k}\mid k>0]$-free module with basis \begin{align}
        \{\prod_{m>0}\lambda_{2m-1}(\hat h_{k_m})\mid k:\Z_{>0}\rightarrow\N \text{ is finitely supported}.\}
    \end{align}
\end{corollary}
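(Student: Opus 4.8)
The plan is to read this module structure directly off the $\Z$-basis $B'_\lambda$ produced in Theorem \ref{lambdabaseth}, exploiting that each basis element $b'_k$ already factors as an \emph{odd part} times an \emph{even part}, the even part lying in the coefficient ring $R:=\Z[\bar h_{2k}\mid k>0]$. Concretely, for a finitely supported $k:\Z_{>0}\to\N$ I would split the index into its restriction $k^{\mathrm{odd}}$ to odd arguments and its restriction $k^{\mathrm{even}}$ to even arguments, and write
\begin{align}
b'_k=o_{k}\cdot e_{k},\qquad o_k=\prod_{m>0,\,m\text{ odd}}\lambda_m(\hat h_{k_m}),\quad e_k=\prod_{m>0,\,m\text{ even}}\lambda_m(\check h_{k_m}).
\end{align}
After reindexing the odd arguments via $m\mapsto 2m-1$, the factors $o_k$ are exactly the elements $\prod_{m>0}\lambda_{2m-1}(\hat h_{k_m})$ appearing in the asserted basis.

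The two inputs I would invoke are both already available. First, that $B'_\lambda$ is a $\Z$-basis of $\Z^{(mix)}[h_r\mid r>0]$ (Theorem \ref{lambdabaseth}). Second, fact (i) from the proof of that theorem, namely that the products $e_k$, as $k$ ranges over functions supported on even arguments, form a $\Z$-basis of $R=\Z[\bar h_{2k}\mid k>0]$. The assignment $k\mapsto(k^{\mathrm{odd}},k^{\mathrm{even}})$ is a bijection between finitely supported functions $\Z_{>0}\to\N$ and pairs consisting of one function supported on odds and one supported on evens, so $\{b'_k\}=\{o_k e_k\}$ runs bijectively over all products of an odd factor $o_k$ with an even $\Z$-basis element $e_k$ of $R$.

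Spanning over $R$ is then immediate: since $\{b'_k\}$ spans $\Z^{(mix)}[h_r\mid r>0]$ over $\Z$ and each $b'_k=o_k e_k$ with $e_k\in R$, the odd factors $\{o_k\}$ span $\Z^{(mix)}[h_r\mid r>0]$ over $R$. For $R$-linear independence I would take a relation $\sum_{o} r_o\, o=0$ with $r_o\in R$, expand each coefficient $r_o=\sum_{e}c_{o,e}\,e$ in the $\Z$-basis $\{e\}$ of $R$ with $c_{o,e}\in\Z$, and note that the resulting identity $\sum_{o,e}c_{o,e}\,o\,e=0$ is a $\Z$-linear relation among the $b'_k$; the $\Z$-linear independence of $B'_\lambda$ forces every $c_{o,e}=0$, hence every $r_o=0$.

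I expect no genuine obstacle: the statement is essentially a repackaging of Theorem \ref{lambdabaseth} as a base change from $\Z$ to $R$. Abstractly the content is only that if a free $\Z$-module has a basis indexed by $I\times J$ whose elements factor as $a_i b_j$ and $\{b_j\}$ is a $\Z$-basis of a subring $R$, then $\{a_i\}$ is an $R$-basis; the sole point requiring care is the bookkeeping of the bijection $k\leftrightarrow(k^{\mathrm{odd}},k^{\mathrm{even}})$ together with the reindexing $m\mapsto 2m-1$ that matches $o_k$ with the stated generators $\prod_{m>0}\lambda_{2m-1}(\hat h_{k_m})$.
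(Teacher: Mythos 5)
Your proposal is correct and is exactly the argument the paper intends: the corollary is stated without proof as an immediate consequence of Theorem \ref{lambdabaseth}, and your unpacking — factoring each $b'_k\in B'_\lambda$ as an odd part times an even part, invoking fact (i) from that theorem's proof that the even products form a $\Z$-basis of $\Z[\bar h_{2k}\mid k>0]$, and using the bijection $k\leftrightarrow(k^{\mathrm{odd}},k^{\mathrm{even}})$ to get $R$-spanning and $R$-independence — is precisely this repackaging. No gaps; the bookkeeping of the reindexing $m\mapsto 2m-1$ is handled correctly.
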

We now give also a "polynomial-like" $\Z$-basis of $\Z^{(mix)}[h_r\mid r>0]$, before let us recall the following classical result (see \cite{EULER}):
\begin{theorem}[Euler]\label{euler}
    The number of partitions of a positive integer $n$ into distinct parts is equal to the number of partitions of $n$ into odd parts.
\end{theorem}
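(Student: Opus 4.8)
The plan is to prove the identity by comparing generating functions, the standard route for partition identities of this shape. Let $p_d(n)$ denote the number of partitions of $n$ into distinct parts and $p_o(n)$ the number of partitions into odd parts. First I would record the two generating functions as elements of the formal power series ring $\Q[[q]]$. Since a partition into distinct parts amounts to deciding, for each $k>0$, whether the part $k$ occurs, we have
\[
\sum_{n\geq 0} p_d(n)\, q^n = \prod_{k\geq 1}\bigl(1+q^k\bigr),
\]
whereas a partition into odd parts allows each odd part $2k-1$ to occur with arbitrary multiplicity, giving
\[
\sum_{n\geq 0} p_o(n)\, q^n = \prod_{k\geq 1}\frac{1}{1-q^{2k-1}}.
\]
Both products are well defined in $\Q[[q]]$ because only finitely many factors contribute to the coefficient of any fixed $q^n$.

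The key step is the purely algebraic manipulation $1+q^k = (1-q^{2k})/(1-q^k)$, valid in $\Q[[q]]$. Substituting it into the first product yields
\[
\prod_{k\geq 1}\bigl(1+q^k\bigr) = \prod_{k\geq 1}\frac{1-q^{2k}}{1-q^k} = \frac{\prod_{k\geq 1}(1-q^{2k})}{\prod_{k\geq 1}(1-q^k)}.
\]
The numerator cancels exactly the even-indexed factors of the denominator, leaving
\[
\frac{\prod_{k\geq 1}(1-q^{2k})}{\prod_{k\geq 1}(1-q^k)} = \prod_{k\geq 1}\frac{1}{1-q^{2k-1}},
\]
which is precisely the generating function for partitions into odd parts. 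Comparing coefficients of $q^n$ then gives $p_d(n)=p_o(n)$ for every $n$, as claimed.

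The only point requiring care is the cancellation of infinite products. I would justify it entirely within $\Q[[q]]$, observing that each sufficiently large finite truncation already stabilizes the coefficient of any fixed power $q^n$, so no analytic convergence issue arises; the manipulation is thus a formal identity and the cancellation is legitimate term by term. I expect this formal bookkeeping to be the only mildly delicate part, and it is essentially mechanical.

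Alternatively, and perhaps more in the spirit of the surrounding combinatorics, one could give Glaisher's explicit bijection: send a partition in which each odd part $m$ occurs with multiplicity $c_m$ to the partition whose parts are $2^a m$ for each binary digit $2^a$ appearing in the base-$2$ expansion of $c_m$. Distinctness of the images follows from uniqueness of the factorization of a positive integer as (power of two) $\times$ (odd number), and the inverse splits each distinct part, written uniquely as $2^a m$ with $m$ odd, into $2^a$ copies of $m$. I would expect this route to be the more delicate to write cleanly, since one must verify that the two maps are mutually inverse, whereas the generating-function computation above is immediate.
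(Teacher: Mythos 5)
Your proposal is correct: the generating-function identity $\prod_{k\geq 1}(1+q^k)=\prod_{k\geq 1}(1-q^{2k-1})^{-1}$, justified formally in $\Q[[q]]$ by coefficient stabilization, is a complete proof, and your remark on how to handle the infinite-product cancellation is exactly the right point of care. Note that the paper itself gives no proof of this statement --- it quotes it as a classical result with a citation to Euler's \emph{Introductio in Analysin Infinitorum} --- so there is nothing internal to compare against; your argument is the standard (indeed Euler's own) proof of the result being cited.
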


\begin{lemma}
 The following identities hold in $\Q[h_r\mid r>0][[u]]$:
    \begin{align}
&\label{eq:hatbar}\lambda_2(\hat h(u^2))=\hat h(u)\hat h(-u)=\bar h(u^2)^2,\\&
\label{eq:cappucciobarra}\sum_{s=0}^{2r}\hat h_{2r-s}\hat h_{s}(-1)^s=\sum_{s=0}^{r}\bar h_{2r-2s}\bar h_{2s}.\\
    \end{align}
\end{lemma}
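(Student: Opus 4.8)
The plan is to derive both identities from a single functional identity in $\Q[h_r\mid r>0][[u]]$ and then to read off \eqref{eq:cappucciobarra} by comparing the coefficients of $u^{2r}$; so the real content is \eqref{eq:hatbar}, and the second identity is a coefficient extraction. The structural fact I use throughout is that $\hat h^{\{a\}}(u)$ of Definition \ref{hcappucciof} is multiplicative in its superscript: its exponent is $\Q$-linear in the sequence $a$, whence $\hat h^{\{a\}}(u)\,\hat h^{\{b\}}(u)=\hat h^{\{a+b\}}(u)$. Taking $a=b=\frac12\uno$ gives at once $\check h(u)^2=\hat h(u)$ (Definition \ref{barradef}), and, after substituting $u\mapsto u^2$, also $\check h(u^2)^2=\hat h(u^2)$.

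To prove \eqref{eq:hatbar} I apply $\lambda_2$ — a homomorphism acting only on the $h_r$ and compatible with the coefficientwise operations on power series (Remark \ref{lambdastabilita}) — to the identity $\check h(u^2)^2=\hat h(u^2)$. Since $\lambda_2$ preserves squares, $\lambda_2(\hat h(u^2))=\lambda_2(\check h(u^2))^2$, and Remark \ref{rembarceckbo} identifies $\lambda_2(\check h(u^2))=\check h(u)\check h(-u)=\bar h(u^2)$. Squaring and using $\check h(\pm u)^2=\hat h(\pm u)$ then yields
\begin{align}
\lambda_2(\hat h(u^2))=\big(\check h(u)\check h(-u)\big)^2=\hat h(u)\hat h(-u)=\bar h(u^2)^2,
\end{align}
which is exactly \eqref{eq:hatbar}.

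For \eqref{eq:cappucciobarra} I compare the coefficients of $u^{2r}$ on the two sides of \eqref{eq:hatbar}. Writing $\hat h(u)=\sum_k\hat h_ku^k$ and $\hat h(-u)=\sum_k(-1)^k\hat h_ku^k$, the Cauchy product of the middle expression $\hat h(u)\hat h(-u)$ contributes $\sum_{s=0}^{2r}(-1)^s\hat h_{2r-s}\hat h_s$, the left-hand side of \eqref{eq:cappucciobarra}, the sign $(-1)^s$ being carried by the factor coming from $\hat h(-u)$. On the other side, since $\bar h_{2j+1}=0$ for all $j$ (Remark \ref{rembarceckbo}) the series $\bar h$ is supported on even degrees, so the $u^{2r}$-coefficient of its square retains only the pairs of even indices and equals $\sum_{s=0}^{r}\bar h_{2r-2s}\bar h_{2s}$. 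Equating the two coefficients gives \eqref{eq:cappucciobarra}.

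I expect the only delicate point to be precisely this last bookkeeping — attaching the sign $(-1)^s$ to the correct factor in the Cauchy product, and using the vanishing of the odd-indexed $\bar h$ to pass from a sum over all decompositions of $2r$ to a sum over even parts; everything else is the formal multiplicativity of the exponential series $\hat h^{\{a\}}$ together with the ring-homomorphism property of $\lambda_2$ and the already-established relation of Remark \ref{rembarceckbo}.
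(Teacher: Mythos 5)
Your proposal is correct and takes essentially the same route as the paper: you establish the generating-function identity \eqref{eq:hatbar} from the exponential definitions (your multiplicativity of $a\mapsto\hat h^{\{a\}}(u)$ combined with Remark \ref{rembarceckbo} is exactly what the paper compresses into ``follows directly from Definition \ref{barradef} and Definition \ref{hcappucciof}''), and you then obtain \eqref{eq:cappucciobarra} by comparing coefficients of $u^{2r}$, the step the paper delegates to \cite{DP}, Proposition 1.19. Your coefficient extraction on the right-hand side — treating $\bar h$ as a series supported in even degrees, so that the $u^{2r}$-coefficient of its square is $\sum_{s=0}^{r}\bar h_{2r-2s}\bar h_{2s}$ — is precisely the reading under which \eqref{eq:hatbar} and \eqref{eq:cappucciobarra} are mutually consistent, so the bookkeeping you single out as the delicate point is handled correctly.
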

 \begin{proof}
    Equation \eqref{eq:hatbar} follows directly from Definition \ref{barradef} and  Notation \ref{hcappucciof}, Equation \eqref{eq:cappucciobarra} follows from Equation \eqref{eq:hatbar} and \cite{DP},Proposition 1.19.
\end{proof}

\begin{theorem}\label{polbasis}
    $\Z^{(mix)}[h_r\mid r>0]$ is a $\Z[\bar h_{2r}\mid r>0]$-free module with basis \begin{align}
        \{\prod_{k>0}\hat h_{k}^{\epsilon_k}\mid \epsilon:\Z_{>0}\rightarrow \{0,1\} \text{ is finitely supported} \}.
    \end{align}
    Equivalently 
    \begin{align}
         B_{q.pol}=\{ \prod_{k>0}\hat h_k^{\epsilon_k}\prod_{k>0}\bar h_k^{d_k}\mid \epsilon:\Z_{>0}\rightarrow \{0,1\} \text{ and } d:\Z_{>0}\rightarrow \N\text{ are finitely supported} \}
    \end{align} is a $\Z$-basis of $\Z^{(mix)}[h_r\mid r>0]$.
\end{theorem}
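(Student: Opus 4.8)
The plan is to exhibit both the claimed set $P:=\{\prod_{k>0}\hat h_k^{\epsilon_k}\mid \epsilon\colon\Z_{>0}\to\{0,1\}\text{ finitely supported}\}$ and the already-known $\lambda$-basis as homogeneous free generating sets of $M:=\Z^{(mix)}[h_r\mid r>0]$ over the polynomial ring $R:=\Z[\bar h_{2r}\mid r>0]$, and then to compare them degree by degree. I grade everything by $\deg h_r=r$, so that $\hat h_k$ is homogeneous of degree $k$, $\bar h_{2k}$ of degree $2k$, and $R$ is a graded polynomial subalgebra whose graded pieces $R_e$ are free $\Z$-modules of finite rank. By the Corollary immediately following Theorem~\ref{lambdabaseth}, $M$ is a free graded $R$-module on $B=\{\prod_{m>0}\lambda_{2m-1}(\hat h_{k_m})\}$; since $\lambda_{2m-1}$ multiplies degrees by $2m-1$, the generator indexed by $\mathbf k$ is homogeneous of degree $\sum_m(2m-1)k_m$, so the number of elements of $B$ of degree $e$ equals the number of partitions of $e$ into odd parts.

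The first and main step is spanning: every monomial $\hat h^{\mathbf m}=\prod_k\hat h_k^{m_k}$ lies in the $R$-span of $P$. I would isolate the middle term $s=r$ of the homogeneous identity \eqref{eq:cappucciobarra} and rewrite it as
\[
\hat h_r^2=\rho_r-2(-1)^r\hat h_{2r}-2(-1)^r\sum_{s=1}^{r-1}(-1)^s\hat h_{2r-s}\hat h_s,\qquad \rho_r:=(-1)^r\sum_{s=0}^r\bar h_{2r-2s}\bar h_{2s}\in R,
\]
a reduction rule trading one squared factor $\hat h_r^2$ for an element of $R$, the single factor $\hat h_{2r}$, and products $\hat h_{2r-s}\hat h_s$ of \emph{distinct} indices $s<r<2r-s$. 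To control the iteration I attach to a degree-$d$ monomial $\hat h^{\mathbf m}$ the pair $(|\mathbf m|,\,d^2-Q(\mathbf m))$ with $|\mathbf m|=\sum_k m_k$ and $Q(\mathbf m)=\sum_k m_k k^2$, ordered lexicographically; note $0\le Q(\mathbf m)\le d^2$. Substituting the rule into one factor $\hat h_r^2$ of $\hat h^{\mathbf m}$, the $\rho_r$- and $\hat h_{2r}$-terms strictly decrease $|\mathbf m|$, while each product term $\hat h_{2r-s}\hat h_s$ leaves $|\mathbf m|$ unchanged but raises $Q$ by $-2r^2+(2r-s)^2+s^2=2(r-s)^2>0$, hence strictly lowers the pair. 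A Noetherian induction on this order then reduces every monomial to an $R$-combination of squarefree ones (the base case), proving that $P$ generates $M$ over $R$. The delicate point, and the main obstacle, is precisely that the rule can manufacture new repeated indices elsewhere in a monomial; termination is saved only because every term it produces is strictly smaller in the measure $(|\mathbf m|,d^2-Q)$ despite such collisions, since $Q$ is additive in the multiplicities.

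For the conclusion I would consider the graded $R$-module surjection $\phi\colon\bigoplus_{p\in P}R\,e_p\twoheadrightarrow M$, $e_p\mapsto p$, where $\deg e_p=\deg p$; it is surjective by the spanning step. In degree $d$ this is a map of finite-rank free $\Z$-modules $\phi_d\colon\bigoplus_{p}R_{d-\deg p}\to M_d=\bigoplus_{b\in B}R_{d-\deg b}$. The number of $p\in P$ of degree $e$ is the number of partitions of $e$ into distinct parts, which by Euler's Theorem~\ref{euler} equals the number of partitions of $e$ into odd parts, i.e. the number of $b\in B$ of degree $e$; therefore $\operatorname{rank}_\Z\big(\bigoplus_p R_{d-\deg p}\big)=\operatorname{rank}_\Z M_d$ for every $d$. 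Since a surjection of free $\Z$-modules of equal finite rank is an isomorphism (tensor with $\Q$, then use torsion-freeness of the kernel), each $\phi_d$ is bijective, so $\phi$ is an isomorphism and $P$ is a free $R$-basis of $M$.

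Finally, because the odd-index $\bar h_k$ vanish by Remark~\ref{rembarceckbo}, the polynomial ring $R=\Z[\bar h_{2r}\mid r>0]$ has monomial $\Z$-basis $\{\prod_k\bar h_k^{d_k}\mid d\colon\Z_{>0}\to\N\text{ finitely supported}\}$; multiplying this $\Z$-basis of $R$ against the free $R$-basis $P$ yields exactly $B_{q.pol}$ as a $\Z$-basis of $\Z^{(mix)}[h_r\mid r>0]$, which is the second, equivalent formulation.
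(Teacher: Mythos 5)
Your proposal is correct and follows essentially the same route as the paper's proof: both use identity \eqref{eq:cappucciobarra} to rewrite a square $\hat h_l^2$ as an element of $\Z[\bar h_{2r}\mid r>0]$ plus $\hat h_{2l}$ plus products of $\hat h$'s with distinct indices, iterate this to reduce every monomial to squarefree ones, and then get linear independence by comparing graded cardinalities with the $\lambda$-type basis of Theorem \ref{lambdabaseth} via Euler's Theorem \ref{euler}. The only differences are bookkeeping: you control termination of the rewriting with the lexicographic measure $\left(|\mathbf{m}|,\,d^2-Q(\mathbf{m})\right)$ where the paper runs a nested induction on $N=\sum_k k\epsilon_k$, and you package the final count as a surjection of graded free $\Z$-modules of equal finite rank, which makes the paper's terser ``dimension considerations'' fully precise.
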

\begin{proof}
    We prove that the $\Z[\bar h_{2r}\mid r>0]$-span of $\{\prod_{k>0}\hat h_k^{\epsilon_k}
    \mid \epsilon\in\{0,1\}\}$ is stable by multiplication by the $\hat h_l$'s. More precisely, we prove by induction on $N=\sum k\epsilon_k$ that $\hat h_l \prod_{k}\hat h_k^{\epsilon_k}$ is in the $\Z$ span of $B_{q.pol}$. If $N=0$ the claim is obvious. Let us assume that $N>0$ and the claim holds for all $\tilde N<N$. If $l\neq k$ for all $k$ such that $\epsilon_k=1$ (or equivalently $\epsilon_l=0$) the claim is obvious. So suppose that  there $\epsilon_l=1$. Let us consider the monomial  $\hat h_l^{2}\hat p$ with $\hat p=\prod_{k\neq l}\hat h_k^{\epsilon_k}$ and remark that $\text{deg}(\hat p)=N-l$.  
Using relation \eqref{eq:cappucciobarra} we have that \begin{align}
    \hat p\hat h_{l}^2=\hat p(2\sum_{j=1}^{l}(-1)^{j+1}\hat h_{l+j}\hat h_{l-j}+(-1)^l\sum_{j=0}^{l}\bar h_{2j}\bar h_{2l-2j}),
\end{align} since the right summand is in the $\Z$-span of $B_{q.pol}$,
 let us focus on the monomials of the form $\hat p \hat h_{l-j}\hat h_{l+j}$ for some $j\ge 1$. Since $\text{deg}( \hat p)<N$, $\hat h_{l-j} \hat p$ is in the $\Z[\bar h_{2r}\mid r>0]$-span of $\{\prod_r\hat h_r^{\epsilon_r}   \mid \sum r\epsilon_r\le N-l+l-j=N-j<N\}$ so that by the induction hypothesis $\hat h_{l+j}\hat h_{l-j}\hat p$ lies in the $\Z[\bar h_{2r}\mid r>0]$-span of $\{\prod_r\hat h_r^{\epsilon_r}\mid \epsilon_r\in\{0,1\}\}$ 
We are left to prove that $ B_{q.pol}$ is linearly independent.
Let us observe that the elements of $ B_{q.pol}$ of degree $d$  are clearly indexed by the pairs of partitions $(\lambda',\lambda'')$ such that  $\lambda'\vdash n'$ consist only of not repeating integers, $\lambda''\vdash n''$ consist of even integers and  $n'+n''=d$; on the other hand the elements  of $ B'_{\lambda}$ of degree $d$  are clearly indexed by the pairs of partitions $(\tilde \lambda',\lambda'')$ such that  $\tilde\lambda'\vdash n'$ consist only of odd integers, $\lambda''\vdash n''$ consist of even integers and  $n'+n''=d$. It follows from Euler's theorem (see Theorem \ref{euler}) on partitions that these sets have the same cardinality.
\end{proof}


In the last part of this section we want to prove that $\hat  h_k^{\{c\}}\in\Z^{(mix)}[h_r\mid r>0]$ for all $k>0$ where
 $c:\Z_{>0}\rightarrow\Q$ is the sequence defined by \begin{align}\label{eq:circledef}
    c_r={2^{r-1}}.
\end{align}
\begin{remark} Let us remark that:
 \begin{align}\label{eq:circonohat}
     \Z[\hat h_r^{\{c\}}\mid r>0]\nsubseteq \Z[\hat h_r\mid r>0],\\
     \Z[\hat h_r^{\{c\}}\mid r>0]\nsubseteq \Z[\bar h_r\mid r>0].\\
 \end{align} 
 The first condition follows by Proposition \ref{condizione} with $(m,p,s)= (1,2,1)$, since  $2\not \vert 2^{2-1}-2^{1-1}$, the second is trivial since $h_1\not\in\Z[\bar h_{2r}\mid r>0]\subseteq \Q[ h_{2r}\mid r>0]$.
\end{remark}

\begin{lemma}\label{quellochenonvolevo}
 Let $\star$ denote the convolution product, $\mu$ the Mobius function and $l:\Z_{>0}\rightarrow\Z$ then the following hold:
\begin{enumerate}
\item if $l(2r+1)=0$ for $\forall r\in \Z_{\ge0}\Rightarrow$ $(\mu\star l)(2r+1)=0$ $\forall r\in \Z_{\ge0},$
\item $\hat h_r^{\{l\}}\in\Z[\hat h_{k}\mid k>0]\Leftrightarrow r\vert(\mu\star l)(r)$,
\item $\hat h_r^{\{l\}}\in\Z[\bar h_{2k}\mid k>0]\Leftrightarrow(2r)\vert2(\mu\star l)(2r) \text{ and } l(2r+1)=0$,
\item $\hat h_r^{\{l\}}\in\Z^{(mix)}[ h_{k}\mid k>0]\Leftrightarrow 
     (2r+1)\vert(\mu\star l)(2r+1) \text{ and } (2r)\vert2(\mu\star l)(2r)
     $.
\end{enumerate}
\end{lemma}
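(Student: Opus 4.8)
The plan is to reduce every part to the membership criterion of Proposition \ref{condizione} through the arithmetic of the Möbius convolution, organized by the multiplicative decomposition of the sequence $l$. The starting point is the observation that, writing $\uno^{(m)}$ for the sequence with $\uno^{(m)}_r=m$ if $m\mid r$ and $0$ otherwise, Möbius inversion of $l_r=\sum_{m\mid r}m\,c_m$ gives $l=\sum_{m>0}\frac{(\mu\star l)(m)}{m}\,\uno^{(m)}$. Since $a\mapsto\hat h^{\{a\}}(u)$ turns addition of sequences into multiplication of series, this yields the product expansion
\begin{align}
\hat h^{\{l\}}(u)=\prod_{m>0}\bigl(\hat h^{\{\uno^{(m)}\}}(u)\bigr)^{(\mu\star l)(m)/m},
\end{align}
in which each factor satisfies $\hat h^{\{\uno^{(m)}\}}(-u)=\lambda_m(\hat h(-u^m))\in\Z[\hat h_k\mid k>0][[u]]$ (using Definition \ref{hcappucciof} and Remark \ref{lambdastabilita}), is supported in degrees divisible by $m$, and has lowest term $(-1)^{m-1}h_m u^m=(m\,\hat h_m+\dots)u^m$. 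The crucial bookkeeping fact is that the denominator $m$ of the exponent is matched by the factor $m$ in $[u^m]\hat h^{\{\uno^{(m)}\}}$, so the only genuine obstruction to integrality at the $m$-th stage is whether $m\mid(\mu\star l)(m)$.

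Part (1) is then purely arithmetic: every divisor of an odd number is odd, so if $l$ vanishes on the odd integers then $(\mu\star l)(2r+1)=\sum_{d\mid 2r+1}\mu((2r+1)/d)\,l(d)=0$. For part (2) I would combine Proposition \ref{condizione} with the classical equivalence between the prime-power congruences $p^s\mid(l_{mp^s}-l_{mp^{s-1}})$ and the Gauss congruences $r\mid(\mu\star l)(r)$ (Dold--Dwork); the product expansion makes this transparent, since the factor $\bigl(\hat h^{\{\uno^{(r)}\}}\bigr)^{(\mu\star l)(r)/r}$ contributes $(\mu\star l)(r)\,\hat h_r$ at degree $r$, while all lower-degree corrections are absorbed integrally by induction on $r$.

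For part (3) I would use Remark \ref{rembarceckbo}, namely $\Z[\bar h_{2k}\mid k>0]=\Z^{(sym)}[\tfrac{h_{2r}}{2}\mid r>0]\subseteq\Q[h_{2r}\mid r>0]$. Membership of all $\hat h_r^{\{l\}}$ in this ring first forces them into $\Q[h_{2r}\mid r>0]$, i.e.\ forces $l$ to be supported on even integers ($l(2r+1)=0$), whereupon part (1) guarantees $\mu\star l$ is likewise even-supported and the odd-degree elements $\hat h_{2r+1}^{\{l\}}$ vanish. On the even part one applies the criterion of (2) after the rescaling $h_{2r}\mapsto h_{2r}/2$ defining the generators $\bar h_{2k}$; this rescaling by $\tfrac12$ doubles the effective Möbius values, converting $2r\mid(\mu\star l)(2r)$ into $2r\mid 2(\mu\star l)(2r)$. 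Part (4) then follows by splitting the product expansion into its odd-$m$ and even-$m$ factors and matching them against the free $\Z[\bar h_{2k}\mid k>0]$-module structure of $\Z^{(mix)}[h_r\mid r>0]$ on the odd $\lambda_{2m-1}(\hat h_{k_m})$'s (established after Theorem \ref{lambdabaseth}, equivalently the basis $B_{q.pol}$ of Theorem \ref{polbasis}): the odd factors are governed by the plain criterion (2) at odd indices, giving $(2r+1)\mid(\mu\star l)(2r+1)$, and the even factors by (3) at even indices, giving $2r\mid 2(\mu\star l)(2r)$.

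The main obstacle I anticipate is not any single part in isolation but the clean decoupling required in (4): one must verify that the odd-indexed and even-indexed contributions to $\hat h^{\{l\}}(u)$ do not interfere, i.e.\ that the even factors land in $\Z[\bar h_{2k}\mid k>0]$ exactly when their denominators are absorbed by the built-in $\tfrac12$ of the $\bar h$'s, while the odd factors behave as in the pure $\Z[\hat h_k\mid k>0]$ situation. Making the factor-of-two bookkeeping of (3) precise, and checking that the module decomposition of $\Z^{(mix)}[h_r\mid r>0]$ is compatible with the parity splitting of the product expansion, is where the real work lies; the Dold--Dwork reformulation underlying (2), though classical, is the arithmetic engine that must be invoked correctly throughout.
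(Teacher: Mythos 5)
Your proposal is correct, and its engine---the M\"obius-inversion factorization $\hat h^{\{l\}}(u)=\prod_{m>0}\bigl(\hat h^{\{\uno^{(m)}\}}(u)\bigr)^{(\mu\star l)(m)/m}$, split according to the parity of the \emph{index} $m$---is genuinely different from the paper's argument, and at step (4) it is the more robust of the two. On (1)--(3) the two proofs agree in substance: the paper proves (1) by the same divisor-parity computation, obtains (2) by citing \cite{DP}, Proposition 1.24 (which is exactly your Dold--Dwork equivalence combined with Proposition \ref{condizione}), and proves (3) by your rescaling $h_{2r}\mapsto\frac{h_{2r}}{2}$ together with the forced vanishing of $l$ on odd integers. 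The divergence is (4): the paper splits $l$ by \emph{support} parity, $l=m+n$ with $m$ odd-supported and $n$ even-supported, writes $\hat h^{\{l\}}=\hat h^{\{m\}}\hat h^{\{n\}}$, and asserts that the claim ``follows from 1.,2.,3.''. As written this has a gap: the hypotheses of (4) do not bring the odd-supported factor within reach of (2) or (3). Already for $l=\uno$ the odd part $m$ is the indicator of the odd integers, for which $(\mu\star m)(2)=-1$, so by (2) the factor $\hat h^{\{m\}}(u)=\hat h(u)\bar h(u)^{-1}$ is \emph{not} in $\Z[\hat h_k\mid k>0][[u]]$, nor (having odd support) in $\Z[\bar h_{2k}\mid k>0][[u]]$; its membership in $\Z^{(mix)}[h_r\mid r>0]$ is precisely the point at issue and is not a consequence of (1)--(3). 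Your index-parity splitting is what closes this: under $(2r+1)\mid(\mu\star l)(2r+1)$ and $2r\mid2(\mu\star l)(2r)$, every odd-$m$ factor carries an integer exponent and has coefficients $\pm\lambda_m(\hat h_k)\in\Z[\hat h_k\mid k>0]$, while every even-$m$ factor is an integer power of $\bigl(\hat h^{\{\uno^{(m)}\}}\bigr)^{1/2}=\hat h^{\{\frac12\uno^{(m)}\}}$, whose coefficients $\pm\lambda_m(\check h_s)=\pm\lambda_2\bigl(\lambda_{m/2}(\check h_s)\bigr)$ lie in $\Z[\bar h_{2k}\mid k>0]$ by Remarks \ref{lambdastabilita} and \ref{rembarceckbo}; hence the full product lies in $\Z[\hat h_k]\cdot\Z[\bar h_{2k}]\subseteq\Z^{(mix)}[h_r\mid r>0]$. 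Finally, the ``decoupling'' you flag as the remaining work in the converse direction of (4) can be bypassed altogether: by Lemma \ref{hatbarincec} one has $\Z^{(mix)}[h_r\mid r>0]\subseteq\Z[\check h_k\mid k>0]=\Z^{(sym)}[\frac{h_r}{2}\mid r>0]$, so criterion (2) in the rescaled variables gives $r\mid2(\mu\star l)(r)$ for all $r$, and for odd $r$ this is automatically the stronger condition $r\mid(\mu\star l)(r)$; no appeal to the free $\Z[\bar h_{2k}]$-module structure is needed. In short, your route costs one extra identity (the product expansion) but yields a complete, self-contained step (4), whereas the paper's support-parity split leaves the essential case unproved.
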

\begin{proof}

\begin{enumerate}
\item \begin{align}
   (\mu\star l)(2r+1)=\sum_{d\vert (2r+1)}\mu(\frac{2r+1}{d})l(d)=0; 
\end{align}
\item See Proposition \cite{DP}, 1.24;
\item From 2. it follows that \begin{align}
    \hat h_r^{\{l\}}\in\Z[\check h_{r}\mid r>0]\forall r>0  \Leftrightarrow 2r\vert(\mu\star l)(r)\end{align}
    thus \begin{align}
    \hat h_r^{\{l\}}\in\Z[\bar h_{r}\mid r>0]\forall r>0  \Leftrightarrow 2r\vert(\mu\star l)(r) \text{ and } l(2r+1)=0 
\end{align}
\item Let $m$ and $n$ respectively the even and the odd part of $l$, namely $m(2r+1)=l(2r+1)$, $n(2r)=l(2r)$  and  $m(2r)=0=n(2r+1)$, thus 
$\hat h_r^{\{l\}}=\hat h_r^{\{m\}}\hat h_r^{\{n\}}$, the claim follows from 1.,2. and 3..
\end{enumerate}
\end{proof}

\begin{theorem}\label{toglicerchietti}
$\hat h_{k}^{\{c\}}\in \Z^{(mix)}[h_r\mid r\in \Z_{>0}]$    
\end{theorem}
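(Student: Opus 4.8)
The plan is to reduce everything to the arithmetic criterion of Lemma~\ref{quellochenonvolevo}(4) applied to the sequence $c_r=2^{r-1}$ from \eqref{eq:circledef}. By that lemma it suffices to verify the two divisibility conditions
\begin{align}
(2r+1)\mid(\mu\star c)(2r+1)\qquad\text{and}\qquad(2r)\mid 2(\mu\star c)(2r)
\end{align}
for all admissible $r$. So the first step is simply to compute the convolution $\mu\star c$ explicitly.

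Since $c_d=2^{d-1}=\tfrac12\,2^{d}$, writing $S(n):=\sum_{d\mid n}\mu(n/d)2^{d}$ one gets
\begin{align}
(\mu\star c)(n)=\sum_{d\mid n}\mu(n/d)2^{d-1}=\tfrac12 S(n),
\end{align}
and each summand $2^{d-1}$ is an integer, so $(\mu\star c)(n)\in\Z$ and $S(n)=2(\mu\star c)(n)$. The single nontrivial input I would invoke is the classical Gauss (necklace) congruence $n\mid S(n)$, valid for every $n>0$: by M\"obius inversion of $2^{n}=\sum_{d\mid n}a(d)$, the integer $S(n)=a(n)$ counts the aperiodic binary words of length $n$, and these are permuted freely in cyclic orbits of size exactly $n$, whence $n\mid a(n)=S(n)$. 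Equivalently, this is the standard congruence $\sum_{d\mid n}\mu(n/d)x^{d}\equiv0\pmod n$ for all integers $x$, specialized at $x=2$.

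With $n\mid S(n)$ in hand both conditions are immediate. For the even index $n=2r$ the second condition reads $(2r)\mid 2(\mu\star c)(2r)=S(2r)$, which is exactly $n\mid S(n)$. For the odd index $n=2r+1$ we have $n\mid S(n)=2(\mu\star c)(n)$, and since $\gcd(n,2)=1$ this forces $n\mid(\mu\star c)(n)$, giving the first condition. Hence both hypotheses of Lemma~\ref{quellochenonvolevo}(4) are satisfied for $l=c$, and $\hat h_k^{\{c\}}\in\Z^{(mix)}[h_r\mid r>0]$ for all $k$, as claimed.

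The only genuinely nonroutine ingredient is the congruence $n\mid S(n)$; the rest is bookkeeping, the one subtlety being the factor $2$ produced by $c_d=\tfrac12\,2^{d}$. This factor is harmless in the even case, where the statement of Lemma~\ref{quellochenonvolevo}(4) already carries a compensating $2$, and it is removed in the odd case precisely by coprimality with $2$. This is why the odd/even dichotomy in the criterion matches the two forms of the Gauss congruence so cleanly, and I expect no further obstacle.
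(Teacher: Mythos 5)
Your proof is correct, but it follows a genuinely different route from the paper's. Both arguments reduce the theorem to the divisibility criterion of Lemma \ref{quellochenonvolevo}, so the substance in either case is proving $(2r+1)\mid(\mu\star c)(2r+1)$ and $(2r)\mid 2(\mu\star c)(2r)$. The paper works with the doubled sequence $f(r)=2^r$, decomposes $r=\prod_i p_i^{a_i}$, asserts that $\mu\star f$ is multiplicative over coprime factors so that $(\mu\star f)(r)=\prod_i\bigl(2^{p_i^{a_i}}-2^{p_i^{a_i-1}}\bigr)$, and then treats each prime-power factor by Euler's theorem (odd $p_i$) or a $2$-adic estimate ($p_i=2$). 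You instead invoke the global Gauss/necklace congruence $n\mid\sum_{d\mid n}\mu(n/d)2^{d}$, proved by the free rotation action on aperiodic binary words, and deduce both conditions at once, removing the spurious factor $2$ in the odd case by coprimality with $2$. The comparison actually favours your argument: $r\mapsto 2^r$ is \emph{not} multiplicative, hence neither is $\mu\star f$ --- for instance $(\mu\star f)(6)=64-8-4+2=54$, whereas the paper's product formula gives $(2^2-2)(2^3-2)=12$ --- so the paper's evaluation of $(\mu\star f)(r)$ fails whenever $r$ has at least two distinct prime factors. The paper's Euler-theorem computation is the correct \emph{local} ingredient (one can indeed prove the congruence prime by prime, pairing the divisors $d=p^{a}e$ and $d=p^{a-1}e$ with $e$ running over divisors of the prime-to-$p$ part of $n$), but the reduction to prime powers as written is broken; your single global congruence is complete as stated, shorter, and incidentally repairs this gap in the paper's own proof.
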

\begin{proof} 
Let us denote by $f$ the double of $c$, namely $f(r)=2^r$, and let $r=\prod_{i=1}^{k}p_i^{a_i}$ be the decomposition of $r$ in prime factors. $\mu$ and $f$ are weak multiplicative, that is they are multiplicative on the coprime factors, then $\mu\star f$ is weak multiplicative.
\begin{align}
(f\star\mu)(r)=\prod_{i=1}^{k}(\sum_{d\vert p_i^{a_i}}f\big(\frac{p_i^{a_i}}{d}\big) \mu(d))=
\prod_{i=1}^{k}(f(p_i^{a_i}) \mu(1)+f(p_i^{a_i-1}) \mu(p_i))=\prod_{i=1}^{k}(2^{p_i^{a_i}}-2^{p_i^{a_i-1}})
\end{align}
Notice that:
\begin{align}
    2^{p_i^{a_i}}-2^{p_i^{a_i-1}}=2^{p_i^{a_i-1}}(2^{p_i^{a_i-1}(p_i-1)}-1).
\end{align} If $p_i$ is odd the first factor is even and the second is a multiple of $p_i^{a_i}$ by  Euler's Theorem and hence is divisible by $2p_i^{a_i}$, that is $(2r+1)\vert (c\star \mu)(2r+1)$. If $p_i=2$ the first factor is divisible by $2^{a_i}$ because ${2^{a_i-1}}\ge a_i$ for any $a_i\ge 1$, that is $c(2r)\vert 2(c\star \mu)(2r)$. The claim follows from Lemma \ref{quellochenonvolevo}.
\end{proof}

\section{Affine Kac-Moody Algebras}\label{AffineKM} This section is organized as follows.
In the Subsection \ref{setup} we fix the notation and we recollect general results on Affine Kac-Moody algebras, we systematically refer to \cite{Bo}, \cite{Kac} and \cite{Da}. In the Subsection \ref{MitzmaneGarland} we recall the results on intergal forms due to Garland and Mitzman (see Theorem \ref{KMintegral}) and we compare them with $\uz$ (see Theorem \ref{comparison})

\subsection{Setup and Notation}\label{setup}
Let $I=\{0,\dots,n\}$ and $I_0=\{1,\dots,n\}$.
Let $A=(a_{i,j})_{i,j\in I}$ be an finite or affine Cartan Matrix. 
Let $D$ be the diagonal matrix that symmetrize $A$, chosen such that $\min\{d_i\mid i \in I\}=1$ and $\const=\max\{d_i\mid i \in I\}$.
It is well known that affine Cartan Matrix are classified by
$ (X_{\tilde n},k)$ where $X_{\tilde n}$ finite Lie algebra, $k=\text{ord}(\chi)$  and $\chi$ is a Dynkin diagram automorphism.
We denote $X^{(k)}_{\tilde n}$ by
the Affine Kac Moody algebra  associated with $A$.
It is well known that $X^{(k)}_{\tilde n}$ admits two main presentation, namely,
the Loop presentation (see Definition \ref{KacMoody}) and the Kac-Moody presentation (see Definition \ref{KacMoody2}) which we will briefly recall.

\begin{definition}\label{KacMoody2}
$X^{(k)}_{\tilde n}$ is the Lie algebra generated by 
$\{e_{i},f_{i},h_{i}\mid i\in I\}$ with relations:
\begin{align}
    &[e_{i},f_j]=\delta_{i,j} h_i;\\
    &[h_i,e_{j}]=a_{i,j}e_{j};\\
    &[h_i,f_{j}]=-a_{i,j}f_{j};\\
    &ad_{e_i}^{1-a_{i,j}}(e_j)=0=ad_{f_i}^{1-a_{i,j}}(f_j) \quad\text{ if } i\neq j.
\end{align}
\end{definition}

\begin{definition}\label{KacMoody}
  $X^{(k)}_{\tilde n}$ is the Lie algebra generated by $\{x^+_{i,r},x^-_{i,r},h_{i,r}, c\mid i \in \{1,\dots,n\},\,
\tilde d_i\vert r\in \Z\} $ with relations:
\begin{align}
    &[c,\cdot]=0;\\
    &[h_{i,r},h_{j,s}]=
    r\delta_{r+s,0}\frac{a_{i,j;r}}{d_j}\const c;\\
    &[x_{i,r}^+,x_{j,r}^-]=\delta_{i,j}(h_{i,r+s}+r\delta_{r+s,0}\frac{\const c}{d_j});\\
    &[h_{i,r},x^\pm_{j,s}]=\pm a_{i,j;r}x^\pm_{j,r+s};\\
    &[x^\pm_{i,r},x^\pm_{i,s}]=0 &\text{ if } (X_{\tilde n}^{(k)},d_i)\neq (A_{2n}^{(2)},1) \text{ or }  r+s \text{ is even};\\
    &[x^\pm_{i,r},x^\pm_{i,s}]+[x^\pm_{i,r+1},x^\pm_{i,s-1}]=0 &\text{ if } (X_{\tilde n}^{(k)},d_i)= (A_{2n}^{(2)},1) \text{ and } r+s \text{ is odd};\\
    &[x^\pm_{1,r},[x^\pm_{1,s},x^\pm_{1,t}]]=0 
    \\
    &(\text{ad} x^\pm_{i,r})^{1-a_{i,j}}(x^\pm_{j,s})=0 &\text{ if }i\neq j.
    \end{align}
    Where; \begin{align}
    a_{i,j;r}=
    \begin{cases}
    2(2+(-1)^{r}) &\text{ if } 
 i=j,\, d_i=1 \text{ and } X_{\tilde n}^{(k)}= A_{2n}^{(2)},\\
        a_{i,j}&\text{ otherwise};
        \end{cases}
        \end{align} and  \begin{equation}
    \tilde d_i=\begin{cases}
1 &\text{ if  } k=1 \text{ or }X_{\tilde n}^{(k)}=A_{2n}^{(2)},\\
    d_i &\text{ otherwise}.\end{cases}
\end{equation}
\end{definition}
Associated with $X_{\tilde n}^{(k)}$ is a finite dimensional simple Lie algebra $X_{ n}$, which corresponds to  $A_0=(a_{i,j})_{i,j\in I_0}$.
Let $\Phi$ and  $\Phi_0$  be the Root systems of $X_{\tilde n}^{(k)}$ and $X_{ n}$.
Denote the set of simple roots of $\Phi$ and $\Phi_0$ by respectively
$\Delta=\{\alpha_0,\alpha_1,\dots,\alpha_n\}$ and $\Delta_0=\{\alpha_1,\dots,\alpha_n\}$. 
Let $Q=\bigoplus_{i\in I} \Z \alpha_i$ and
 $Q_0=\bigoplus_{i\in I_0} \Z \alpha_i$ be the root lattice of
 respectively $X_{\tilde n}^{(k)}$ and $X_{ n}$.
 Denote by $W_0$ and $W$ the Weyl groups of respectively $X_{\tilde n}^{(k)}$ and $X_{ n}$.
The $W_0$ -invariant bilinear form $(\cdot\vert\cdot)$ on $Q_0$, which induces a positive definite scalar product on $\R\otimes_\Z Q_0$ and induces a positive semidefinite symmetric bilinear form on $\R\otimes_\Z Q$ and has kernel generated by $\delta=\alpha_0+\theta$ where $\theta\in Q_0$.
Let $P=\bigoplus_{i\in I_0}\Z\omega_i\subseteq \R \otimes_\Z Q_0$ be the weight lattice, where $\forall i \in I_0$
$\omega_i$ is defined by $(\omega_i\vert \alpha_j)=\tilde d_i \delta_{i,j}$ $\forall  j \in I_0$; $Q_0$ naturally embeds in $P$, which provides a $W$-invariant action on $Q$ by $x(\alpha) = \alpha - (x\vert \alpha)\delta \forall x \in P , \alpha \in  Q$. $\hat W=P\rtimes W_0$ is
called the extended Weyl group of $X_{\tilde n}^{(k)}$,

The root system $\Phi$ divides into two parts: the real $\Phi^{re}$ and imaginary roots $\Phi^{im}=\{m\delta\mid 0\neq m\in \Z\}$.  
It is possible to describe the $\Phi^{re}$ in terms of $\Phi_0$, as follows:
\begin{align}\label{eq:affinerootsystem}
    &\Phi^{re}=\begin{cases}
        \{ \alpha+m \delta\mid \alpha\in \Phi_0,\,m\in\Z\} &\text{ if } k=1,\\
        \{ \alpha+m \delta\mid \alpha\in \Phi_0,\,m\in\Z\}\cup
        \{ 2\alpha+(2m+1) \delta\mid (\alpha,\alpha)=2,\,m\in\Z\}
        &\text{ if } X_{\tilde n}^{(k)}=A_{2n}^{(2)},\\
        \{ \alpha+(\alpha,\alpha)m \delta\mid \alpha\in \Phi_0,\,m\in\Z\} &\text{ otherwise}.
    \end{cases}
\end{align}

\begin{definition}\label{tauiloop}
For all $i\in I$, let us define the following automorphisms of $X_{\tilde n}^{(k)}$:
\begin{align}
&\tau_i=\exp(\text{ad} e_i )\exp(-\text{ad}{f_{i}})\exp(\text{ad}e_{i}).\end{align}
Denote by $T_0$ the group generated by the $\tau_i$'s for $i\in I_0$, in particular $T_0$ is an automorphism group of $X_{n}$. 
\end{definition} 
\begin{definition}\label{rootsvectors}
Let $\Phi_0^+$ and $\Phi_0^-$ respectively the positive and the negative roots of $\Phi_0$ 
Let us fix a reduced expression $w$ on the longest element of $W_0$, that is $w_0=s_{j_1}\dots, ,s_{j_k}$.
Let $\beta_0\in \Phi_0^{+}$, then there exist $1\le l\le k$ such that $\beta_0=\sigma_{j_1}\dots\sigma_{j_{l-1}}\alpha_{j_{l}}$. We denote by $x_{\beta_0,m}^{\pm}=\tau_{j_1}\dots\tau_{j_{l-1}}(x_{j_l,m}^{\pm})$. If $\beta=2\beta_0+(2m+1)\delta$, in particular we have that $(\beta_0,\beta_0)=(\alpha_1,\alpha_1)$ hence there exist $w\in W_0$ such that $w(\alpha_1)=\beta_0$, we denote by $X_{\beta_0,2m+1}^{\pm}=\tau_{i_1}\dots\tau_{{i-l}}([x^\pm_{1,0},x^\pm_{1,2m+1}])$ if $w=\sigma_{i_1}\dots\sigma_{{i_l}}$.  
\end{definition}

\begin{remark}\label{isomorphism}
The equivalence between the two presentations of $X_{\tilde n}^{(k)}$ is defined as follows: \begin{align}
&e_i\rightarrow 
\begin{cases}
 x^+_{i,0}  &\text{ if } i\neq 0\\
 x^-_{\theta,1}  &\text{ if } X_{\tilde n}^{(k)}\neq A_{2n}^{(2)} \text{ and } i= 0\\
 \frac{1}{4}X^-_{\theta,1}  &\text{ if } X_{\tilde n}^{(k)}= A_{2n}^{(2)}\text{ and } i= 0\\
\end{cases}
&f_i\rightarrow 
\begin{cases}
 x^-_{i,0}  &\text{ if } i\neq 0\\
 x^+_{\theta,1}  &\text{ if } X_{\tilde n}^{(k)}\neq A_{2n}^{(2)} \text{ and } i= 0\\
 \frac{1}{4}X^+_{\theta,1}  &\text{ if } X_{\tilde n}^{(k)}= A_{2n}^{(2)}\text{ and } i= 0\\
\end{cases}
\end{align} 
where  $\theta$ is the highest root (respectively the highest short root) of $X_{\tilde n}$ if $k=1$ (respectively if $k\neq1$).
\end{remark}

\begin{remark}\label{embeddings}
Remark that the Loop presentation implies that there are two embeddings: \begin{align}
        \phi_i: A_{1}^{(1)}\hookrightarrow X_{\tilde n}^{(k)} \text{ if } (X_{\tilde n}^{(k)},d_i)\neq (A_{2n}^{(2)},1),\\
        \varphi : A_{2}^{(2)}\hookrightarrow X_{\tilde n}^{(k)} \text{ if } (X_{\tilde n}^{(k)},d_i)= (A_{2n}^{(2)},1).
    \end{align}
defined on the generators respectively by $x_{1,r}^{\pm}\mapsto x_{i,r}^{\pm}$ and $x_{1,r}^{\pm}\mapsto x_{1,r}^{\pm}$.
\end{remark}

\subsection{Mitzman and Garland Integral forms}\label{MitzmaneGarland}

Let $X_{\tilde n}^{(k)}$ be an affine algebra with Affine Cartan Matrix $A$. Denote by $\u$ its  universal enveloping algebra.
\begin{definition}\label{uzkm}
Define $\u_\Z^{K.M.}$ as the $\Z$-subalgebra of $\u$ generated by $\{e_{i}^{(r)},f_{i}^{(r)}\mid i\in I ,\, r\in \N\}$.    
\end{definition}

In the 1970s Garland \cite{HG} and Mitzman \cite{DM} in the 1980s  investigate the structure of  $\u_\Z^{K.M.}$. 
The natural question that arises at this point is: what is the relationship between $\u_\Z^{K.M.}$ and the analogous $\Z$-algebra generated by the divided powers of $x_{i,r}^{+}$ and  $x_{i,r}^{-}$, denoted by $\uz$? As we shall see these coincide except in the case $A_{2n}^{(2)}$. In that particular case, the integral form turns out to be smaller, as we will prove Subsections \ref{glue} and \ref{ultimaspero}.
\begin{definition}
The set $T\cdot\{e_i,f_i\mid i\in I\}=\{x_{\alpha}\mid \alpha\in \Phi^{re}\}$ are the root vectors considered by Mitzman and Garland.
\end{definition}
\begin{theorem}[Garland, Mitzman]\label{KMintegral}
$\u_\Z^{K.M.}$ is an integral form of $\u$. More precisely:
\begin{align}
 \u_\Z^{K.M.}\cong \u_\Z^+\otimes\u_\Z^{im,+}\otimes\u_\Z^{\h}\otimes\u_\Z^{im,-}\otimes\u_\Z^{-},   
\end{align}
where $\u_\Z^\h$ is an algebras of binomials in  the $h_i$ for $i\in I$ and
$\u_\Z^{im,\pm}\cong \otimes_{i\in I_0}\Z_\lambda[h_{i,r}\mid \pm r>0]$ (see Theorem \ref{bun}),
$\u_\Z^+$ and $\u_\Z^-$ are divided powers algebras in the real positive and real negative root vectors.
\end{theorem}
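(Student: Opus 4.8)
The plan is to deduce the statement from a $\Z$-form refinement of the triangular decomposition of $\u$ and then to identify the five tensor factors one at a time. First I would recall the root-space decomposition of $\gothg=X_{\tilde n}^{(k)}$ and group the root vectors into five blocks: positive real, positive imaginary (the $h_{i,r}$ with $r>0$), the Cartan block (the $h_i$ with $i\in I$), negative imaginary, and negative real. By the Poincaré--Birkhoff--Witt theorem the ordered multiplication map furnishes a $\Q$-linear isomorphism $\u^+\otimes\u^{im,+}\otimes\u^\h\otimes\u^{im,-}\otimes\u^-\xrightarrow{\sim}\u$. The whole task is then to promote this to a $\Z$-module isomorphism after intersecting each factor with $\u_\Z^{K.M.}$, which amounts to describing $\u_\Z^{K.M.}\cap\u^{\bullet}$ for each block and to checking that the ordered product of these $\Z$-forms is exactly $\u_\Z^{K.M.}$.

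For the real factors $\u_\Z^\pm$, I would use the automorphisms $\tau_i$ of Definition \ref{tauiloop}. Since $\tau_i=\exp(\mathrm{ad}\,e_i)\exp(-\mathrm{ad}\,f_i)\exp(\mathrm{ad}\,e_i)$ is built from exponentials of nilpotent adjoint actions, it preserves $\u_\Z^{K.M.}$ and carries each divided power $e_i^{(k)}$ to $\pm$ the divided power $x_\beta^{(k)}$ of another real root vector (compare the construction of the $x_{\beta_0,m}^\pm$ in Definition \ref{rootsvectors}). Combined with the Chevalley/Garland commutation relations among real root vectors --- whose structure constants are integers once everything is written in the divided-power ordering --- this shows that $\u_\Z^+$ is the free $\Z$-module on ordered monomials in the $x_\beta^{(k)}$, $\beta\in\Phi^{re,+}$, and symmetrically for $\u_\Z^-$. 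The Cartan block is the easiest: the $h_i$ commute and satisfy Kostant's binomial identities, so $\u_\Z^\h$ is the binomial algebra $U_\Z^{(Bin)}$ in the $h_i$ of Definition \ref{binom}.

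The heart of the argument is the imaginary factors $\u_\Z^{im,\pm}$, and this is where I expect the main obstacle. The elements $h_{i,r}$ with $r\neq0$ occur as commutators such as $[x_{i,r}^+,x_{i,0}^-]$ and carry denominators, so they do not by themselves generate an integral form. Following Garland, the right integral generators are the generating-function elements $\hat h_{i,k}$ obtained from $\exp\!\bigl(\sum_{r>0}(-1)^{r-1}h_{i,r}u^r/r\bigr)$, exactly the $\hat h_k$ of Definition \ref{hcappucciof}; these are the combinations that remain integral when one divided-power-orders a product $(x_{i,r}^+)^{(k)}(x_{i,s}^-)^{(l)}$ across the imaginary wall. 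I would then invoke Theorem \ref{bun}: the $\Z$-span of the $\hat h_{i,k}$ together with all their $\lambda_m$-images is precisely $\Z_\lambda[h_{i,r}\mid r>0]$, which identifies $\u_\Z^{im,+}\cong\bigotimes_{i\in I_0}\Z_\lambda[h_{i,r}\mid r>0]$ and likewise for the negative part.

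Finally I would assemble the five pieces by verifying the integral straightening relations: moving a negative real divided power past a positive one, or past the imaginary and Cartan elements, must produce only $\Z$-combinations of ordered monomials. This closure statement, together with the block descriptions above, shows that the restricted multiplication map is an isomorphism of $\Z$-modules onto $\u_\Z^{K.M.}$. The genuinely hard steps are the integrality in the imaginary block and this straightening: controlling denominators when commuting $(x_{i,r}^+)^{(k)}$ with $(x_{i,s}^-)^{(l)}$ is precisely what forces the symmetric combinations $\hat h_{i,k}$ and the $\lambda$-description of Theorem \ref{bun} rather than a naive polynomial form. In the twisted cases Mitzman must additionally track the $\tau_i$-orbits of root vectors under the diagram automorphism $\chi$ and the anomalous relations special to $A_{2n}^{(2)}$ (the odd $r+s$ identity in Definition \ref{KacMoody}), which is the source of the discrepancies analyzed in the rest of the paper.
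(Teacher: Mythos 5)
You should first note a structural point: the paper never proves this theorem. It is stated with the attribution ``[Garland, Mitzman]'' and recalled from \cite{HG} and \cite{DM}; no proof environment follows it, and the surrounding text (Subsection \ref{MitzmaneGarland}) only uses it as input for Theorem \ref{comparison}. So there is no internal argument to compare yours against; the only meaningful comparison is with the proofs in the cited references, and your outline does faithfully reproduce their architecture: a PBW triangular decomposition refined over $\Z$, $\tau_i$-stability of $\u_\Z^{K.M.}$ to generate the real blocks $\u_\Z^{\pm}$ from the $e_i^{(k)}$, $f_i^{(k)}$ (consistent with the paper's description of the root vectors as $T\cdot\{e_i,f_i\mid i\in I\}$), Kostant's binomial algebra for $\u_\Z^{\h}$, the generating-function elements $\hat h_{i,k}$ and the $\lambda$-basis of Theorem \ref{bun} for $\u_\Z^{im,\pm}$, and finally integral straightening to show the ordered product map is onto. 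Your closing remark about the anomalous relations in $A_{2n}^{(2)}$ is also the right caveat, and it is exactly the phenomenon the rest of the paper exploits.

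That said, as a proof your proposal has a genuine gap, which you partly acknowledge yourself: the two steps you label ``genuinely hard'' are not side conditions to be checked but constitute essentially the entire content of Garland's and Mitzman's work. Concretely, you never establish (i) the explicit integral straightening identity expressing $(x_{i,r}^+)^{(k)}(x_{i,s}^-)^{(l)}$ as a $\Z$-combination of ordered monomials whose middle factors are the $\hat h_{i,k}$ and their $\lambda_m$-images --- this is the computation that forces the definition of $\hat h_{i,k}$ and makes $\Z_\lambda[h_{i,r}\mid r>0]$ appear, rather than merely being compatible with it; and (ii) the $\Z$-linear independence of the resulting ordered monomials, which is what upgrades spanning to freeness and hence to the tensor decomposition claimed in the statement. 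Asserting that structure constants ``are integers once everything is written in the divided-power ordering'' is precisely the thing to be proved, not an input. So your text is a correct and well-organized road map of the known proof, but it does not itself prove the theorem; in the context of this paper that is arguably appropriate, since the author likewise treats the result as a black box.
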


We want now describe the relationship between $\u_\Z$ and $\u_\Z^{K.M.}$
\begin{theorem}\label{comparison}
If $X_{\tilde n}^{(k)}\neq A_{2n}^{(2)}$, $\u_\Z$ is isomorphic to $\u_\Z^{K.M.}$.
If $X_{\tilde n}^{(k)}=A_{2n}^{(2)}$, $\u_\Z\subseteq\u_\Z^{K.M.}$.
\end{theorem}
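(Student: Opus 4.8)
The plan is to compare the two $\Z$-forms generator by generator through the explicit change of presentation in Remark \ref{isomorphism}, proving the inclusion $\u_\Z\subseteq\u_\Z^{K.M.}$ in every type and the reverse inclusion exactly when $X_{\tilde n}^{(k)}\neq A_{2n}^{(2)}$. Since both are $\Z$-subalgebras of the same $\u$, an equality of subalgebras yields the asserted isomorphism.

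For $\u_\Z\subseteq\u_\Z^{K.M.}$ I would show that each generator $(x_{i,r}^\pm)^{(k)}$ of $\u_\Z$ already lies in $\u_\Z^{K.M.}$. Each $x_{i,r}^\pm$ (with $i\in I_0$, $r\in\Z$) is the root vector attached to the real root $\pm\alpha_i+r\delta$ from the first family in \eqref{eq:affinerootsystem}; by Theorem \ref{KMintegral} the factor $\u_\Z^\pm$ of $\u_\Z^{K.M.}$ is the divided power algebra on the real root vectors $\{x_\alpha\mid\alpha\in\Phi^{re}\}$, so it suffices to identify $x_{i,r}^\pm$, up to sign, with the Garland--Mitzman vector $x_{\pm\alpha_i+r\delta}$. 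This matching follows from Remark \ref{isomorphism} for $r=0$, $i\neq0$, and in general by transporting along the extended affine Weyl group $\hat W$, under which $\u_\Z^{K.M.}$ is stable. Hence $(x_{i,r}^\pm)^{(k)}=x_{\pm\alpha_i+r\delta}^{(k)}\in\u_\Z^{K.M.}$ and $\u_\Z\subseteq\u_\Z^{K.M.}$ in all types; in particular this already settles the case $X_{\tilde n}^{(k)}=A_{2n}^{(2)}$.

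For the reverse inclusion away from $A_{2n}^{(2)}$ I would show $e_i^{(k)},f_i^{(k)}\in\u_\Z$ for all $i\in I$. When $i\in I_0$ this is immediate from Remark \ref{isomorphism}, since $e_i^{(k)}=(x_{i,0}^+)^{(k)}$ and $f_i^{(k)}=(x_{i,0}^-)^{(k)}$ are generators of $\u_\Z$. For $i=0$ and $X_{\tilde n}^{(k)}\neq A_{2n}^{(2)}$ one has $e_0=x_{\theta,1}^-$, $f_0=x_{\theta,1}^+$, and by Definition \ref{rootsvectors} the vector $x_{\theta,1}^\pm$ is a $T_0$-conjugate $\tau_{j_1}\cdots\tau_{j_{l-1}}(x_{j_l,1}^\pm)$ of a Drinfeld generator. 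The crucial lemma is that $\u_\Z$ is stable under each $\tau_i$, $i\in I_0$: since $\tau_i=\exp(\text{ad}\,e_i)\exp(-\text{ad}\,f_i)\exp(\text{ad}\,e_i)$ is built from the exponentials of the adjoint action of $e_i=x_{i,0}^+$ and $f_i=x_{i,0}^-$, whose divided powers already lie in $\u_\Z$, the standard Kostant-type argument shows these exponentials preserve $\u_\Z$, whence $\tau_i(\u_\Z)=\u_\Z$. As $\tau_i$ is a $\Q$-algebra automorphism, $\tau_{j_1}\cdots\tau_{j_{l-1}}\big((x_{j_l,1}^-)^{(k)}\big)=(x_{\theta,1}^-)^{(k)}=e_0^{(k)}$ (up to a sign absorbed into $\pm^k$), so $e_0^{(k)}\in\u_\Z$, and symmetrically $f_0^{(k)}\in\u_\Z$. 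This gives $\u_\Z^{K.M.}\subseteq\u_\Z$ and the desired equality.

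Finally I would record why the reverse inclusion breaks precisely in type $A_{2n}^{(2)}$: there Remark \ref{isomorphism} reads $e_0=\tfrac14 X_{\theta,1}^-$, so $e_0^{(k)}=4^{-k}(X_{\theta,1}^-)^{(k)}$, and the conjugation argument now produces a denominator $4^k$ that cannot be absorbed integrally by the Drinfeld generators; thus only $\u_\Z\subseteq\u_\Z^{K.M.}$ survives, the strictness $\u_\Z\subsetneq\u_\Z^{K.M.}$ being deferred to Subsections \ref{glue} and \ref{ultimaspero}. I expect the main obstacle to be the bookkeeping in the second paragraph: verifying that, outside the affine node of $A_{2n}^{(2)}$, the Drinfeld root vectors coincide with the Garland--Mitzman vectors with unit normalization so that the comparison is genuinely denominator-free, together with establishing the $T_0$-stability of $\u_\Z$ with due care for the modified commutation relations of Definition \ref{KacMoody} at the special node.
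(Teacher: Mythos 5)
Your proposal is correct and takes essentially the same route as the paper: both inclusions are obtained by matching generators through Remark \ref{isomorphism} and transporting them with the $\tau$-automorphisms of the (extended) affine Weyl group, using that $\uz^{K.M.}$ is $\tau$-stable to get $(x_{i,r}^{\pm})^{(k)}\in\uz^{K.M.}$, and that $\uz$ is $\tau$-stable together with $e_0=\tau_w(f_\theta)$ to get $e_0^{(k)},f_0^{(k)}\in\uz$ outside type $A_{2n}^{(2)}$. Your version merely spells out details the paper's two-sentence proof leaves implicit (the Kostant-type stability argument and the $4^{-k}$ normalization obstruction at the affine node of $A_{2n}^{(2)}$).
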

\begin{proof}
 If $X_{\tilde n}^{(k)}\neq A_{2n}^{(2)}$   then $\alpha_0=\delta-\theta$ where $\theta\in \Phi_0$ thus there exist $i\in I_0$ and $w\in\hat W$ such that $e_0=\tau_w( f_\theta)$.
 $\uz^{K.M.}$ is $\tau_i$-stable for all $i\in I$ then there exist $w\in\hat W$ and such that $\tau_w((e_i)^{(k)})=\tau_w((x_{i,0}^{\pm})^{(k)})=(x_{i,1}^{\pm})^{(k)})$
\end{proof}


\section{$\u(A_{2n}^{(2)})$}
From the latter section it follows that in order to conclude the study of $\uz$ we need to consider the case of $A_{2n}^{(2)}$.
Fix $I=\{1,\dots,n\}$.
As we shall see, in this case is not more true that the positive real part of the $\Z$-subalgebra by $\{(x_{i,r}^+)^{(k)},(x_{i,r}^-)^{(k)}\mid i\in I , r\in \Z, k\in \N\}$ is the $\Z$-subalgebra generated by $\Z$-subalgebra by $\{(x_{i,r}^+)^{(k)}\mid i\in I , r\in \Z, k\in \N\}$.

\begin{definition}\label{aquattrodef}
$A_{2n}^{(2)}$ (respectively $\u$) is the Lie algebra (respectively the associative algebra) over $\Q$ generated by $\{c,h_{i,r},x^\pm_{i,r},X^\pm_{1,2r+1}\vert r\in\Z\,, i\in I\}$ with relations:
\begin{align}\label{eq: aqdrel}
&[c,\cdot]=0;\\&
    [h_{i,r},h_{j,s}]=
    r\delta_{r+s,0}a_{i,j;r}\frac{2c}{d_j}\\&
    [x_{i,r}^+,x_{j,r}^-]=\delta_{i,j}(h_{i,r+s}+r\delta_{r+s,0}\frac{2c}{d_j});\\&
    [h_{i,r},x^\pm_{j,s}]=\pm a_{i,j;r}x^\pm_{j,r+s};\\&
    [x_{1,r}^\pm,x_{1,s}^\pm]=\begin{cases}
        \pm(-1)^s X_{1,r+s}^\pm \text{ if $r+s$ is odd}\\
        0 \text{ otherwise;}\end{cases}\\
        &[x_{1,r}^\pm,X_{1,2s+1}^\pm]=0;\\&
    (\text{ad} x^\pm_{i,r})^{1-a_{i,j}}(x^\pm_{j,s})=0 \text{ if }i\neq j;\\
    &[x^\pm_{i,r},x^\pm_{i,s}]=0 \text{ if } r+s \text{ is even or } i\neq1 ;\label{eq:solorpius}\\
&[x_{1,r}^+,[x_{1,r}^+,x_{2,s}^+]]=-[x_{1,r+1}^+,[x_{1,r+1}^+,x_{2,s-2}^+]]\label{eq:rrscambiasegno};
    \end{align}
where 
\begin{align}A=(a_{i,j})_{i,j=1,\dots,n,0}=
    \begin{pmatrix}
        2 & -2 & 0 & \dots& 0\\
        -1 & 2 & -1 & \dots& 0 \\
        0 & -1 & 2  & \dots& 0 \\
        0 &\ddots& \ddots & \ddots & \vdots \\
        \vdots &\ddots& -1 & 2 & -2 \\
        \vdots &\dots& 0 & -1 & 2 \\
    \end{pmatrix}\text{ if $n>2$};\quad A=(a_{i,j})_{i,j=1,0}=
    \begin{pmatrix}
        2 & -4 \\
        -1 & 2\\
    \end{pmatrix},\text{ if $n=2$};
\end{align}   
\begin{align}
\end{align}

\begin{align}   
   a_{i,j;r}=\begin{cases}
   2(2+(-1)^{r-1})&\text{ if } (i,j)=(1,1);\\
a_{i,j} &\text{ otherwise }.
\end{cases} 
\end{align}
\end{definition}
\begin{definition}
    Let us denote $\u^+,\u^-,\u^{0,+},\u^{0,-}$ and $\u^\h$ the subalgebras of $\u$  generated respectively by 
    \begin{align}
      &\{x_{i,r}^+\mid i\in I, r\in\Z\},\ \{x_{i,r}^-\mid i\in I, r\in\Z\},\\
    &\{h_{i,r}\mid i\in I, r\in\Z_{>0}\},\{h_{i,r}\mid i\in I, r\in\Z_{<0}\},
    \{h_{i,0}, c \mid i\in I\}
    .  
    \end{align}  and by $\u^0$ the algebra generated by $\u^{0,+},\u^{0,-}$ and $\u^\h$ 
\end{definition}
\begin{definition}\label{aqautomor}
$A_{2n}^{(2)}$ and $\u$ are endowed with the following anti/auto/ho\-mo/morphisms:\\
$\sigma$ is the antiautomorphism defined on the generators by:
\begin{align}
    &x_{i,r}^\pm\mapsto x_{i,r}^\pm,\\
    &X_{1,r}^\pm\mapsto-X_{1,r}^\pm,\\
    &h_{i,r}\mapsto h_{i,r},\\
    &c\mapsto-c;
\end{align}
$\Omega$ is the antiautomorphism defined on the generators by:
\begin{align}
    &x_{i,r}^\pm\mapsto x_{i,-r}^\mp,\\
    &X_{1,r}^\pm\mapsto X_{1,-r}^\mp,\\
    &h_{i,r}\mapsto h_{i,-r},\\
    &c\mapsto c;
\end{align}
$T$ is the automorphism defined on the generators by:
\begin{align}
    &x_{i,r}^\pm\mapsto x_{i,r\mp 1}^\mp,\\
    &X_{1,r}^\pm\mapsto -X_{1,r\mp 2}^\mp,\\
    &h_{i,r}\mapsto h_{i,-r}-r\delta_{r,0}c,\\
    &c\mapsto c.
\end{align}
\end{definition}
\begin{notation} \label{rootsystemadueennedue}
Recalling that $\Phi^{re}$, the set of real roots of $A_{2n}^{(2)}$, decompose into positive real and negative roots, $\Phi^{re}= \Phi^{re,+}\cup \Phi^{re,-}$ with the property $\Phi^{re,+}=- \Phi^{re,-}$, moreover 
$\Phi^{re,+}$ can be described as follows:
\begin{align}
\Phi^{re,+}&=\Phi_s^{re,+}\cup\Phi_m^{re,+}\cup\Phi_l^{re,+},
\end{align} where
\begin{align}
\Phi_s^{re,+}&=\{\alpha+r\delta\mid \alpha\in\Phi_{0,s}^+ , r\in \Z\},\\
\Phi_m^{re,+}&=\{\alpha+r\delta\mid \alpha\in\Phi_{0,m}^+ , r\in \Z\},\\
\Phi_l^{re,+}&=\{2\alpha+(2r+1)\delta\mid \alpha\in\Phi_{0,s}^+ , r\in \Z\},\\
\end{align} 
where $\Phi_{0,s}^{+}$ and $\Phi_{0,m}^{+}$ are respectively:
\begin{align}
    &\Phi_{0,s}^{+}=\{\alpha_i+\dots+\alpha_j\mid 1\le i\le j\le n\},\\
&\Phi_{0,m}^{+}=\{2\alpha_1+\dots+2\alpha_{i}+\alpha_{i+1}+\dots+\alpha_j\mid 1\le i<j\le n\}
.
\end{align}
moreover we set  $\Phi^{+}_0=\Phi_{0,s}^{+}\cup \Phi_{0,m}^{+}$.
\end{notation}


\begin{remark}
    $ \{x_{\alpha}\mid \alpha\in \Phi^{re}\}$ is the set of Chevalley generators used by Mitzman. In particular the $\Z$-subalgebra of $\tu$ generated by $T\cdot\{(e_i)^{(k)}\mid i\in I\cup \{0\}, r\in \N\}$ is a free $\Z$-module with basis the ordered monomials in the divided powers of the ${x_{\alpha}} $'s.
\end{remark}

\begin{definition}\label{xgranditau}
    For all $i\in I$, let us define recursively the following elements: \begin{align}
X^\pm_{i,2r+1}=
\begin{cases}
    \pm[x^\pm_{1,2r+1},x^\pm_{1,0}] &\text{ if } i=1\\
 \tau_i(X^\pm_{i-1,2r+1}) &\text{ if } i>1.
\end{cases}
\end{align}
\end{definition}

\begin{definition}\label{injectivemaps}
 The following maps are Lie-algebra homomorphisms, obviously injective, inducing embeddings:
    \begin{align}\label{eq:psi}
        \bar\psi:&A_{2(n-1)}^{(2)}\rightarrow A_{2n}^{(2)}\\
        &x^\pm_{i,r}\mapsto x^\pm_{i,r}\\
        &h_{r}\mapsto h_{i,r}\\
        &c\mapsto c\\\\\label{eq:tildepsi}
        \tilde\psi:& A_{n-1}^{(1)}\rightarrow A_{2n}^{(2)}\\
        x^\pm_{i,r}&\mapsto x^\pm_{i+1,r}\\
        h_{i,r}&\mapsto h_{i+1,r}\\
        c&\mapsto c
    \end{align}
\end{definition}

\begin{definition}\label{bhuz}
Here we define some $\Z$-subalgebras of $\u$:\\
$\uz$ is the $\Z$-subalgebras of $\u
$ generated by \begin{align}
    &\{(x_{i,r}^{+})^{(k)},(x_{i,r}^{-})^{(k)}\mid r\in\Z,k\in\N, i\in I\};
\end{align}\\
$\uz^+$ and $\uz^-$ are the $\Z$-subalgebras of $\u
$ respectively generated by \begin{align}
    &\{(x_{i,r}^{+})^{(k)}\mid r\in\Z,k\in\N, i\in I\},\\ &\{(x_{i,r}^{-})^{(k)}\mid r\in\Z,k\in\N, i\in I\};
\end{align}\\
$\buz^+$ and $\buz^-$ are the $\Z$-subalgebras of $\u
$ respectively generated by \begin{align}
    &\{(x_{i,r}^{+})^{(k)}, (\frac{1}{2}X_{1,2r+1}^{+})^{(k)}\mid r\in\Z,k\in\N, i\in I\},\\ &\{(x_{i,r}^{-})^{(k)}, (\frac{1}{2}X_{1,2r+1}^{-})^{(k)}\mid r\in\Z,k\in\N, i\in I\};
\end{align}\\
Let $n=1$ and  $\epsilon(r)=-1$ if $r\vert 4$ and $\epsilon(r)=1$ otherwise, we define $\uz^{0,+}$ and $\uz^{0,-}$ as the $\Z$-subalgebras of $\u
$ respectively generated by \begin{align}
    &\{\tilde h_{1,r}\mid r>0\},\\ &\{\tilde h_{1,r},\mid r<0\},
\end{align}\\
$\buz^{0,+}$ and $\buz^{0,-}$ are the $\Z$-subalgebras of $\bu
$ respectively generated by \begin{align}
    &\{\hat h_{i,r},\bar h_{1,r},\mid r>0, i\in I\},\{\hat h_{i,r},\bar h_{1,r},\mid r<0, i\in I\},
\end{align}\\
$\muz^{0,+}$ and $\muz^{0,-}$ are the $\Z$-subalgebras of $\u
$ respectively generated by \begin{align}
    &\{\check h_{1,r}\mid r>0, i\in I\},\{ \check h_{1,r},\mid r<0, i\in I\}.
\end{align}\\
$\uz^{\h}=\Z^{bin}[h_{i,0},c\mid i\in I]$;\\  
$\uz^{0}$ is the $\Z$ subalgebra of $\u^0$ generated by $\uz^{0,+},\uz^{0,-}$ and $\uz^{\h}$;\\
$\muz^{\h}=\Z^{bin}[h_{i,0},\frac{c}{2}\mid i\in I]$;\\
$\muz^{0}$ is the $\Z$ subalgebra of $\muz^0$ generated by $\muz^{0,+}$, $\muz^{0,-}$ and $\muz^{\h}$;\\
$\buz^{0}$ is the $\Z$ subalgebra of $\buz^0$ generated by $\buz^{0,+}$, $\buz^{0,-}$ and $\uz^{\h}$.\\
\end{definition}


\section{Integral forms of $A_{2}^{(2)}$}\label{theothers}
In this Section we fix $X_{\tilde n}^{(k)}=A_{2}^{(2)}$ and we denote by $\u$ its universal enveloping algebra. \\
In this case, we have $I=\{1\}$, and since this does not cause any confusion, we omit the subscript  $i$, e.g. we will denote $x_{i,r}^+$ as $x_{r}^+$.\\
In \cite{DP} we described the integral form  $\uz$ (that we denoted by $\tilde\uz$) of the enveloping algebra  $\u$ (see Definition \ref{a22}) of the Kac-Moody algebra of type $A_2^{(2)}$ generated by the divided powers of the Drinfeld generators $x_{r}^{\pm}$, we recall the result in Theorem \ref{trmA22}.
In this section we describe two other integral forms $\buz$ and $\muz$  (see Definition \ref{bhuz}), $\buz$ is generated by the divided powers of the Drinfeld generators $x^\pm_{r}$ and by the divide powers of the elements $\frac{1}{2}X^\pm_{2r+1}$, $\muz$ is generated by adding extra elements  $\check h_{r}$ to $\buz$ (see Definition \ref{barradef}). As we shall see later (see Section \ref{chapteraqd}), if we consider the $\Z$-algebra generated by the divided powers of the positive Drinfeld  generators $x^+_{i,r}$ ($i\in I $, $r\in\Z$) in the case of $A_{2n}^{(n)}$ for $n>1$ then this algebra also contains the divided powers of the elements  $\frac{1}{2}X_{1,2r+1}^+$, for this reason we are interested in the study of $\buz$. 
There are two remarkable differences between $\buz$ and $\uz$: the first, as previously announced, is the presence of the divided powers of the elements $\frac{1}{2}X^\pm_{2r+1}$. The second  difference concerns the structure of the (positive and negative) imaginary component. In fact, in this case $\buz\cap \u^{0,+}\neq \Z[\tilde h_{r}\mid r>0]$  
is no longer an algebra of polynomials (see Proposition \ref{notpol} and Theorem \ref{lambdabaseth} and \ref{polbasis}), but we exhibit a Garland-type $\Z$ basis (see the description if $\Z^{(mix)}[h_r\mid r>0]$ in Definition \ref{mixdef}). We shall also show that $\buz$ can be enlarged to another integral form $\muz$ of $\tu$ with the same positive real part (that is $\muz\cap\tu^+=\buz\cap\tu^+=\buz^+)$ 
but such that 
$\muz\cap\tu^{0,+}=\muz^{0,+}\supsetneq\buz^{0,+}$ is an algebra of polynomials. $\muz$ and $\buz$ will be introduced and studied together and the description of  $\muz$ will also avoid unnecessary computation in $\buz$.

\begin{definition} \label{a22}
 $A_{2}^{(2)}$ (respectively $\tu$) is the Lie algebra (respectively the associative algebra) over $\Q$ generated by $\{c,h_r,x_r^{\pm},X_{2r+1}^{\pm}\mid r\in\Z\}$ with relations $$c\,\,\,{\rm{is\,\,central}}\,,$$
$$[h_r,h_s]=\delta_{r+s,0}2r(2+(-1)^{r-1})c\,;$$
$$[h_r,x_s^{\pm}]=\pm 2(2+(-1)^{r-1})x_{r+s}^{\pm}\,;$$
$$[h_r,X_s^{\pm}]=\begin{cases}\pm 4X_{r+s}^{\pm}&{\rm{if}}\ 2\mid r\,;
\\ 0&{\rm{if}}\ 2\nmid r\,;
\end{cases}\leqno{(s\ {\rm{odd}})}$$
$$[x_r^{\pm},x_s^{\pm}]=\begin{cases}0&{\rm{if}}\ 2\mid r+s\,,
\\ \pm(-1)^sX_{r+s}^{\pm}&{\rm{if}}\ 2\nmid r+s\,;
\end{cases}$$
$$[x_r^{\pm},X_s^{\pm}]=[X_r^{\pm},X_s^{\pm}]=0\,;$$
$$[x_r^+,x_s^-]=h_{r+s}
+\delta_{r+s,0}rc\,;$$
$$[x_r^{\pm},X_s^{\mp}]=\pm(-1)^r4x_{r+s}^{\mp}\,;\leqno{(s\ {\rm{odd}})}$$
$$[X_r^+,X_s^-]=8h_{r+s}
+4\delta_{r+s,0}rc\,.\leqno{(r,s\ {\rm{odd}})}$$
\end{definition}

\begin{remark}\label{tsb}
 $\sigma\vert_{\tu^{\pm,0}}={\rm{id}}_{\tu^{\pm,0}},\,\sigma\vert_{\tu^{\pm,1}}={\rm{id}}_{\tu^{\pm,1}},\,\sigma(\tu^{\pm,c})=\tu^{\pm,c},\,\sigma(\tu^{0,\pm})=\tu^{0,\pm}$, $\sigma(\tu^\h)=\tu^\h$.

 $\Omega(\tu^{\pm,0})\!=\tu^{\mp,0}$, $\Omega(\tu^{\pm,1})\!=\tu^{\mp,1}$, $\Omega(\tu^{\pm,c})\!=\tu^{\mp,c}$, $\Omega(\tu^{0,\pm})\!=\tu^{0,\mp}$, $\Omega\vert_{\tu^\h}\!\!=\!{\rm{id}}_{\tu^\h}$.
\end{remark}

Here we recall the results on ${A_{2}^{(2)}}$  (see \cite{DP}, Theorem 5.46):

\begin{theorem}\label{trmA22}
The $\Z$-subalgebra $\tuz$ of $\tu$ generated by $$\{(x_r^+)^{(k)},(x_r^-)^{(k)}\mid r\in\Z,k\in\N\}$$ is an integral form of $\tu$. 

More precisely
\begin{align}
    &\tuz\cong\tuz^{-}\otimes\tuz^{0}\otimes\tuz^{+},\\
    &\tuz^{0}\cong \tuz^{0,-}\otimes\tuz^\h\otimes\tuz^{0,+},\\
    &\tuz^{\pm}\cong \tuz^{\pm,0}\otimes\tuz^{\pm,c}\otimes\tuz^{\pm,0};
\end{align}

and a $\Z$-basis of $\tuz$ is given by the product
$$B^{-,1}B^{-,c}B^{-,0}B^{0,-}B^\h B^{0,+}B^{+,1}B^{+,c}B^{+,0}$$
where $B^{\pm,0}$, $B^{\pm,1}$, $B^{\pm,c}$, $B^{0,\pm}$ and $B^\h$ are the $\Z$-bases respectively of $\tuz^{\pm,0}$, $\tuz^{\pm,1}$, $\tuz^{\pm,c}$, $\tuz^{0,\pm}$ and $\tuz^\h$ given as follows:
$$B^{\pm}=\Big\{\prod_{r\in\Z}(x_{r}^{\pm})^{(k_r)}\mid{\bf{k}}:\Z\to\N\,\, {\rm{is\, finitely\, supported}}\Big\}$$
$$B^{\pm,1}=\Big\{{(\bf{x}}^{\pm,1})^{({\bf{k}})}=\prod_{r\in\Z}(x_{2r+1}^{\pm})^{(k_r)}\mid{\bf{k}}:\Z\to\N\,\, {\rm{is\, finitely\, supported}}\Big\}$$
$$B^{\pm,c}=\Big\{{(\bf{X}}^{\pm})^{({\bf{k}})}=\prod_{r\in\Z}(X_{2r+1}^{\pm})^{(k_r)}\mid{\bf{k}}:\Z\to\N\,\, {\rm{is\, finitely\, supported}}\Big\}$$
$$B^{0,\pm}=\Big\{{\tilde{{\bf{h}}}_{\pm}^{\bf{k}}}=\prod_{l\in\N}{\tilde h_{\pm l}^{k_l}}\mid{\bf{k}}:\N\to\N\,\,{\rm{is\, finitely\, supported}}\Big\}$$
$$B^\h=\Big\{\binom{h_0}{k}\binom{c}{\tilde k}\mid k,\tilde k\in\N\Big\}.$$
\end{theorem}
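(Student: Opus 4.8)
The plan is to prove Theorem~\ref{trmA22} as a PBW-type integral basis theorem, in the spirit of Garland and Kostant but adapted to the Drinfeld presentation of $A_2^{(2)}$ in Definition~\ref{a22}. The backbone is a factorization of $\tu$ over $\Q$ together with the verification that $\tuz$ is freely spanned over $\Z$ by the ordered monomials indicated in the statement. First I would establish, from the PBW theorem applied to the loop presentation, the $\Q$-vector space decomposition $\tu\cong\tu^-\otimes\tu^0\otimes\tu^+$ and its refinements: the real root part $\tu^\pm$ splits according to the parity of the degree and the ``long'' root vectors into the three factors indexed by $0,1,c$, while $\tu^0$ splits as $\tu^{0,-}\otimes\tu^\h\otimes\tu^{0,+}$. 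This reduces the theorem to essentially two independent tasks, since the negative real part follows from the positive one by applying the antiautomorphism $\Omega$ (cf.\ Remark~\ref{tsb}), which interchanges $\tu^{\pm,\bullet}$ with $\tu^{\mp,\bullet}$.

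Next I would verify membership: that each proposed basis element actually lies in $\tuz$. The divided powers $(x_r^\pm)^{(k)}$ are generators by definition. The long-root vectors appear as brackets $X_{r+s}^\pm=\pm(-1)^s[x_r^\pm,x_s^\pm]$, and I would prove inductively, using the standard integral divided-power identities, that their divided powers $(X_{2r+1}^\pm)^{(k)}$ lie in $\tuz$ as well. For the Cartan and imaginary factors, the binomials $\binom{h_0}{k}$ and $\binom{c}{\tilde k}$ and the elements $\tilde h_r$ are produced from the brackets $[x_r^+,x_s^-]=h_{r+s}+\delta_{r+s,0}rc$; here the precise divided-power combinations of the $h_r$ that remain integral are exactly those supplied by the commutative integral form of Section~\ref{Commutative} (Theorem~\ref{bun}), and it is this input that guarantees $\tuz^{0,\pm}$ is the predicted polynomial algebra in the $\tilde h_r$.

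The heart of the argument, and the step I expect to be the main obstacle, is closure of $\tuz$ under multiplication with integral structure constants---i.e.\ the straightening relations rewriting an arbitrary product of generators as a $\Z$-linear combination of the ordered monomials $B^{-,1}B^{-,c}B^{-,0}B^{0,-}B^\h B^{0,+}B^{+,1}B^{+,c}B^{+,0}$. One must show that every commutator used to reorder factors is integral after passing to divided powers: between two real vectors of mixed parity (which creates the $X$'s), between a real vector and an imaginary $h_r$ (governed by $[h_r,x_s^\pm]=\pm2(2+(-1)^{r-1})x_{r+s}^\pm$), and among the $h_r$ themselves. The delicate points are the recurring factors $2$, $4$ and $8$ special to $A_2^{(2)}$ (for instance $[x_r^\pm,X_s^\mp]=\pm(-1)^r4x_{r+s}^\mp$ and $[X_r^+,X_s^-]=8h_{r+s}+4\delta_{r+s,0}rc$); these must be tracked carefully to confirm that no denominators are forced and that the generating set is genuinely closed. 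This is precisely the computation in which $\tuz^{0,\pm}$ turns out to be polynomial, in contrast with the $\Z^{(mix)}$ phenomenon that later distinguishes $\buz$.

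Finally, linear independence over $\Z$ is immediate once spanning is proven: the same ordered monomials form a $\Q$-basis of $\tu$ by the decomposition of the first step, hence are a fortiori $\Z$-linearly independent. Since $\tuz\otimes_\Z\Q=\tu$, this shows that $\tuz$ is a free $\Z$-module and an integral form of $\tu$ with the stated factorization, completing the proof.
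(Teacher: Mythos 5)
Your proposal cannot be checked against the paper's own argument for a simple reason: the paper gives none. Theorem \ref{trmA22} is recalled verbatim from \cite{DP} (Theorem 5.46), as the sentence immediately preceding it says, so your attempt has to stand as a proof on its own. In spirit it does follow the same Garland-type strategy as \cite{DP} — a $\Q$-PBW factorization, membership of the proposed basis elements, integral straightening relations, independence for free from the rational basis — and two of your reductions are sound: applying $\Omega$ (Remark \ref{tsb}) to obtain the negative part from the positive one, and deducing $\Z$-linear independence from spanning plus the fact that the ordered monomials are, up to scalars, a $\Q$-basis of $\tu$.

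Nevertheless there are two genuine gaps. First, what you yourself call the heart of the argument — the straightening relations showing that any product of the generators is a $\Z$-combination of the ordered monomials in $B^{-,1}B^{-,c}B^{-,0}B^{0,-}B^\h B^{0,+}B^{+,1}B^{+,c}B^{+,0}$ — is only announced, never established; these identities (the analogues of \cite{DP}, Lemma 2.3 and Appendix A, which the present paper invokes throughout Section \ref{theothers}) are the entire content of the theorem, so deferring them leaves an outline rather than a proof. Second, and concretely wrong: you assert that the integral combinations of the $h_r$'s ``are exactly those supplied by Theorem \ref{bun}'', i.e.\ by $\Z^{(sym)}[h_r\mid r>0]=\Z[\hat h_r\mid r>0]$. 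That is the correct answer for the untwisted algebra $A_1^{(1)}$, but not for $A_2^{(2)}$: the basis $B^{0,\pm}$ in the statement consists of monomials in the $\tilde h_{\pm r}$'s, whose generating function is the twisted product $\tilde h^{\pm}(u)=\hat h^{\pm}(u)\,\lambda_4\bigl(\hat h^{\pm}(-u^4)^{-\frac{1}{2}}\bigr)$ of Remark \ref{leacca}, equivalently $\tilde h=\hat h^{\{a\}}$ for the sequence $a_r=-1$ if $4\mid r$ and $a_r=1$ otherwise. Since $a_4-a_2=-2$ is not divisible by $2^2$, Proposition \ref{condizione} shows that the $\tilde h_k$ do \emph{not} all lie in $\Z[\hat h_r\mid r>0]$, so $\Z[\tilde h_r\mid r>0]\neq\Z^{(sym)}[h_r\mid r>0]$; this strict discrepancy is also what feeds Remark \ref{remtildecheck}. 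Had you run your closure argument with the symmetric form of Theorem \ref{bun} as the imaginary component, the proposed module would not contain the monomials of $B^{0,\pm}$ and spanning would break down. Identifying the correct twisted generators $\tilde h_r$ — forced by the structure constants $2(2+(-1)^{r-1})$ in Definition \ref{a22} — and proving that they generate exactly $\tuz\cap\tu^{0,+}$ is precisely the delicate point of the rank-one twisted case, and it is absent from your plan.
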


\begin{proposition}\label{stbuzmuz}

 The following stability properties under the action of $\sigma$, $\Omega$ and $T^{\pm 1}$  hold:
\begin{itemize}
    \item $\buz$, $\buz^+$,$\buz^-$
    $\buz^{+,0}$, $\buz^{+,1}$,$\buz^{+,c}$,$\buz^\h$, $\buz^{0,+}$, $\buz^{0,-}$ and $\buz^0$ are $\sigma$-stable.
\item 
$\buz$, $\buz^0$ and $\buz^\h$ are $\Omega$-stable 
, while \begin{align}
&\Omega(\buz^{\pm})=\buz^{\mp},\\
&\Omega(\buz^{+,0})=\buz^{-,0},\\ 
&\Omega(\buz^{+,1})=\buz^{-,1},\\ 
&\Omega(\buz^{+,c})=\buz^{-,c},\\ 
    \end{align}
    \item $\tuz$, $\tuz^+$, $\tuz^-$,$\tuz^{+,c}$,
 $\tuz^\h$, $\tuz^{0,+}$,  $\tuz^{0,-}$ and  $\tuz^0$ are
 $T^{\pm 1}$-stable, while 
 $T^{\pm 1}(\tuz^{+,0})=\tuz^{+,1}$ and hence are $T^{\pm 2}$-stable .
\end{itemize}
\end{proposition}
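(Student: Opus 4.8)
The plan is to reduce every assertion to a finite check on the generators and then read it off from the definitions of $\sigma$, $\Omega$ and $T$. Each of the listed $\Z$-algebras is generated, as a $\Z$-algebra, by divided powers $y^{(k)}=y^k/k!$ of single elements $y$ (namely the $x_r^\pm$ and the $\tfrac12 X_{2r+1}^\pm$) together with the commutative generators $\hat h_r,\bar h_r,\check h_r$ and the binomials $\binom{h_0}{k},\binom{c}{k}$ in the Cartan pieces. Since $\sigma,\Omega,T$ are $\Z$-algebra (anti)homomorphisms and a divided power of a single element is insensitive to reversing the order of the factors, we have $\phi(y^{(k)})=(\phi(y))^{(k)}$ for every $\phi\in\{\sigma,\Omega,T,T^{-1}\}$, while a $\Z$-scalar pulls out as $\phi((\lambda y)^{(k)})=\lambda^k(\phi(y))^{(k)}$. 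Hence it suffices, for each subalgebra $S$ and each $\phi$, to verify that $\phi$ sends every generator of $S$ into the claimed target; the stated equalities then follow because $\sigma,\Omega$ are involutions and $T$ is invertible, so a one-sided inclusion together with its image under $\phi^{-1}$ forces equality.

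For $\sigma$ everything is immediate: $\sigma$ fixes each $x_r^\pm$ and each $h_r$, while $\sigma(\tfrac12 X_{2r+1}^\pm)=-\tfrac12 X_{2r+1}^\pm$ and $\sigma(c)=-c$. Thus each real-root generator is fixed, each $X$-generator maps to its own negative, and $\hat h_r,\bar h_r$ are fixed since they are polynomials in the (mutually commuting) $h_s$. The only computation needed is for $\buz^\h=\Z^{bin}[h_0,c]$: here $\sigma$ fixes $h_0$ and sends $\binom{c}{k}$ to $\binom{-c}{k}=(-1)^k\binom{c+k-1}{k}$, which lies in $\Z^{bin}[c]$ because $\binom{c+k-1}{k}$ expands by Vandermonde into a $\Z$-combination of the $\binom{c}{j}$. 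This gives the $\sigma$-stability of all the listed pieces and hence of $\buz$ and $\buz^0$.

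For $\Omega$ the key observation is that $\Omega$ swaps the Drinfeld sign and negates the loop index, $x_r^\pm\mapsto x_{-r}^\mp$, $X_{2r+1}^\pm\mapsto X_{-2r-1}^\mp$, $h_r\mapsto h_{-r}$, $c\mapsto c$, and negating the index preserves its parity. Consequently $\Omega$ carries the even-index piece $\buz^{+,0}$ onto $\buz^{-,0}$, the odd-index piece $\buz^{+,1}$ onto $\buz^{-,1}$, the long-root piece $\buz^{+,c}$ onto $\buz^{-,c}$, and more generally $\buz^{\pm}$ onto $\buz^{\mp}$. On the imaginary side $\Omega(\hat h_r)=\hat h_{-r}$ and $\Omega(\bar h_r)=\bar h_{-r}$, so $\Omega$ interchanges $\buz^{0,+}$ and $\buz^{0,-}$; since it fixes $\buz^\h$ pointwise, the algebra $\buz^0$ they generate is $\Omega$-stable, and therefore so is $\buz$. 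Involutivity of $\Omega$ upgrades all these inclusions to equalities.

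The remaining and genuinely delicate assertions concern $T$. Reading off its action on the generators, $T$ preserves the Drinfeld sign and shifts the loop degree: on the short real-root generators it changes the parity of the index, whence $T^{\pm1}(\tuz^{+,0})=\tuz^{+,1}$ and the two parity-graded pieces are only $T^{\pm2}$-stable, while on the long-root generators the index stays odd (up to sign) so $\tuz^{+,c}$ is $T^{\pm1}$-stable; consequently the ungraded unions $\tuz^{\pm}$ are $T^{\pm1}$-stable. The \emph{main obstacle} lies in the Cartan and imaginary part: one must check that the images of the nonlinear generators $\hat h_r,\bar h_r$, defined through the exponential generating series of Definitions \ref{hcappucciof} and \ref{barradef}, remain inside $\tuz^{0,\pm}$, and that $T(\tuz^\h)\subseteq\tuz^\h$ despite $T$ altering $h_0$ by an integer multiple of $c$. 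The first is handled by the $\lambda_m$-stability of $\Z[\hat h_r\mid r>0]$ (Remark \ref{lambdastabilita} and Theorem \ref{bun}) together with Lemma \ref{hatbarincec}, which control how the series $\hat h(u),\bar h(u)$ transform; the second follows again from the Vandermonde identity $\binom{x+y}{k}=\sum_j\binom{x}{j}\binom{y}{k-j}$, showing $\Z^{bin}[h_0,c]$ is closed under $h_0\mapsto h_0+mc$. The identical computations, now carrying the extra generators $\check h_r$ along, yield the corresponding stability properties of $\muz$.
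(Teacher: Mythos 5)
Your overall strategy --- reduce every assertion to a check on generators, use $\phi(y^{(k)})=\phi(y)^{(k)}$ for the (anti)automorphisms, handle the Cartan part by the Vandermonde identity, and upgrade one-sided inclusions to equalities via invertibility of $\sigma,\Omega,T$ --- is the right one, and it is in substance what the paper does (its proof is a citation of \cite{DP}, Proposition 5.19, which is the same direct verification). The $\sigma$- and $\Omega$-parts of your argument are correct, as is the parity bookkeeping for $T$ on the real root vectors (your sign-preserving reading $T(x^\pm_r)=x^\pm_{r\mp1}$, $T(X^\pm_{2r+1})=-X^\pm_{2r\mp2+1}$ is the intended one, consistent with the statement and with \cite{DP}, despite the typos in Definition \ref{aqautomor}) and the Vandermonde argument for $T$ on $\Z^{(bin)}[h_0,c]$, where indeed $T(h_0)=h_0-c$ and $T(c)=c$.

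There is, however, one genuine defect: the step you single out as ``the main obstacle'' --- $T$-stability of $\tuz^{0,\pm}$ --- is justified by an argument that does not work. The $\lambda_m$-stability of $\Z[\hat h_r\mid r>0]$ (Remark \ref{lambdastabilita}, Theorem \ref{bun}) and Lemma \ref{hatbarincec} concern the substitution homomorphisms $\lambda_m\colon h_r\mapsto h_{mr}$ and the inclusions $\hat h(u),\bar h(u)\in\Z[\check h_k\mid k>0][[u]]$; neither statement says anything about how the series $\hat h(u)$, $\bar h(u)$, $\tilde h(u)$ transform under $T$, so no $T$-stability can be extracted from them. The correct --- and much simpler --- reason is that $T$ fixes $h_r$ for every $r\neq0$ (only $h_0$ is shifted, by $-c$, as one sees by applying $T$ to $[x_r^+,x_s^-]=h_{r+s}+\delta_{r+s,0}rc$), hence $T$ restricts to the identity on $\tu^{0,\pm}$ and in particular fixes every generator $\tilde h_r$, $\hat h_r$, $\bar h_r$, $\check h_r$ pointwise; this is exactly the fact the paper invokes later (``$T\vert_{\tu^{0,+}}=\mathrm{id}\vert_{\tu^{0,+}}$'' in the proof of Proposition \ref{zeropiubarra}). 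With that substitution your proof is complete; as written, the justification of this one step is vacuous.
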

\begin{proof}
The Proof is the same as \cite{DP} Proposition  5.19.
\end{proof}

\begin{remark}
We have that:
\begin{align}
\tuz\subsetneq\buz\subseteq\muz,
\end{align} the first inclusion follows from Theorem \ref{trmA22} and Definition \ref{aquattrodef}, the second inclusion follows from Definition \ref{aquattrodef}.
\end{remark}

The aim of this section is to prove the following theorems:

\begin{theorem}\label{buztheorem}
The $\Z$-subalgebra $\buz$ of $\tu$ is an integral form of $\tu$.
    More precisely
    \begin{align} \buz&\cong\buz^-\otimes\buz^{0,-}\otimes\buz^\h\otimes\buz^{0,+}\otimes\buz^{+}, \\
    \buz^{\pm}&\cong \Z^{(div)}[x^\pm_{2r}\mid \pm r\ge0]\otimes\Z^{(div)}[\frac{1}{2}X^\pm_{2r+1}\mid \pm r\ge0] \otimes\Z^{(div)}[x^\pm_{2r+1}\mid \pm r\ge0],\\
    \buz^0&\cong\buz^{0,-}\otimes\buz^{\h}\otimes\buz^{0,+},
    \end{align}
where $\buz^{\pm}$ has basis 
$B^\pm$ given by the product $B^\pm=B^{\pm,1}B^{\pm,c}B^{\pm,0} $ defined as follows:
$$B^{\pm,1}=\Big\{{(\bf{x}}^{\pm,1})^{({\bf{k}})}=\prod_{r\in\Z}(x_{2r+1}^{\pm})^{(k_r)}\mid{\bf{k}}:\Z\to\N\,\, {\rm{is\, finitely\, supported}}\Big\}$$
$$B^{\pm,0}=\Big\{{(\bf{x}}^{\pm,1})^{({\bf{k}})}=\prod_{r\in\Z}(x_{2r}^{\pm})^{(k_r)}\mid{\bf{k}}:\Z\to\N\,\, {\rm{is\, finitely\, supported}}\Big\}$$
$$B^{\pm,c}=\Big\{{(\bf{X}}^{\pm})^{({\bf{k}})}=\prod_{r\in\Z}(\frac{X_{2r+1}^{\pm}}{2})^{(k_r)}\mid{\bf{k}}:\Z\to\N\,\, {\rm{is\, finitely\, supported}}\Big\},$$
$\buz^{0,\pm}$ 
with basis given by \begin{align}
         B_{q.pol}^{0,\pm}=\{ \prod_{k>0}\hat h_{\pm k}^{\epsilon_k}\prod_{k>0}\bar h_{\pm k}^{d_k}\mid \epsilon,d:\Z_{>0}\rightarrow \N \text{ and } \text{ are finitely supported and } \epsilon_i\in\{0,1\} \} 
    \end{align} or equivalently
    \begin{align}
        B_{q.\lambda}^{0,\pm}=\{ \prod_{m>0, m \text{ odd } } \lambda_m (\hat h_{k_m})\prod_{m>0, m \text{ even } } \lambda_m (\check h_{k_m}),\mid k:\Z_{>0}\rightarrow\N \text{ is finitely supported}\};\end{align}
$\buz^\h=\Z^{(bin)}[h_0,c]$ with basis  \begin{align}
    \bar B^\h=\{\binom{h_0}{r},\binom{c}{s}\mid r,s\in \N\}.
\end{align}
\end{theorem}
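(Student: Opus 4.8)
The plan is to follow the blueprint of the proof of Theorem \ref{trmA22} for $\tuz$ given in \cite{DP}, upgrading each step to account for the two genuinely new features of $\buz$: the divided powers of $\tfrac12 X^\pm_{2r+1}$ among the generators, and the appearance of the mixed symmetric form of Subsection \ref{mixedsection} in the imaginary component. By Proposition \ref{stbuzmuz} the antiautomorphism $\Omega$ interchanges $\buz^+$ and $\buz^-$ and fixes $\buz^0$ and $\buz^\h$, so it suffices to analyse the positive real part $\buz^+$, the Cartan part, and the crossing relations between positive and negative generators; the negative real part then follows formally. The triangular decomposition $\buz\cong\buz^-\otimes\buz^{0,-}\otimes\buz^\h\otimes\buz^{0,+}\otimes\buz^+$ splits into an injectivity and a surjectivity statement. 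Injectivity of the multiplication map is inherited from the PBW theorem for $\tu$ over $\Q$: the five factors lie in distinct sums of root spaces of $\tu$, so an ordered product of monomials from them is already $\Q$-linearly independent, hence $\Z$-linearly independent. The substance is therefore surjectivity, i.e.\ that every product of generators of $\buz$ can be straightened into the prescribed order with integral coefficients.

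Next I would pin down the structure of $\buz^+$ (and, via $\Omega$, of $\buz^-$). Using the relations of Definition \ref{a22} — in particular $[x^\pm_r,x^\pm_s]=\pm(-1)^sX^\pm_{r+s}$ when $r+s$ is odd and $[x^\pm_r,x^\pm_s]=0$ when $r+s$ is even — one checks that products of the divided powers $(x^+_r)^{(k)}$ straighten into the claimed tensor form $\Z^{(div)}[x^+_{2r}\mid r\ge 0]\otimes\Z^{(div)}[\tfrac12 X^+_{2r+1}\mid r\ge0]\otimes\Z^{(div)}[x^+_{2r+1}\mid r\ge 0]$. The crucial point, exactly as in \cite{DP}, is that the bracket of an even-index and an odd-index generator produces the long-root vector $X^+_{2r+1}$, and that the divided-power straightening of these mixed products forces the normalization $\tfrac12 X^+_{2r+1}$ rather than $X^+_{2r+1}$ to appear; this is precisely why $\buz$ is defined with the half-integer generators, and the verification is a factor-of-$2$ refinement of the argument already carried out for $\tuz$. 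The bases $B^{\pm,1},B^{\pm,c},B^{\pm,0}$ and their product $B^\pm$ then follow, and the surjectivity of the global straightening reduces to the crossing relations moving a positive generator past a negative one.

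For the imaginary and Cartan parts I would invoke Subsection \ref{mixedsection}. The crossing relations $[x^+_r,x^-_s]=h_{r+s}+\delta_{r+s,0}rc$ and $[X^+_r,X^-_s]=8h_{r+s}+4\delta_{r+s,0}rc$ for $r,s$ odd, together with their divided-power versions, show that $\buz\cap\tu^{0,+}=\buz^{0,+}$ is generated by the elements $\hat h_r$ extracted à la Garland from the short-root divided powers $(x^\pm_r)^{(k)}$, and by the elements $\bar h_{2r}$ extracted from the long-root divided powers $(\tfrac12 X^\pm_{2r+1})^{(k)}$; the latter land in $\Z^{(sym)}[\tfrac{h_{2r}}{2}\mid r>0]$ by Remark \ref{rembarceckbo}. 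This identifies $\buz^{0,+}$ with the algebra $\Z^{(mix)}[h_r\mid r>0]$ of Definition \ref{mixdef}, and Theorems \ref{lambdabaseth} and \ref{polbasis} then supply the two advertised bases $B^{0,\pm}_{q.\lambda}$ and $B^{0,\pm}_{q.pol}$; in particular $\buz^{0,+}$ fails to be polynomial by Proposition \ref{notpol}. The piece $\buz^\h=\Z^{(bin)}[h_0,c]$ with basis $\bar B^\h$ is the standard rank-one binomial computation, which I would quote directly from \cite{DP}.

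I expect the main obstacle to be the surjectivity of straightening in $\buz^+$ with the correct half-integer normalization, together with the matching claim that the imaginary part generated by the $\tfrac12 X$-divided powers is \emph{exactly} the mixed form $\Z^{(mix)}[h_r\mid r>0]$ and no larger. One must show that no spurious denominators are created when commuting $(x^+_r)^{(k)}(x^-_s)^{(l)}$ across each other, and that the long-root divided powers produce precisely the $\bar h_{2r}$ and not the finer $\check h_r$. Establishing that $\buz^{0,+}$ equals $\Z^{(mix)}[h_r\mid r>0]$ rather than the larger $\Z[\check h_r\mid r>0]$ is the delicate inclusion of Remark \ref{chechdiverso}, and it is exactly this gap that will later distinguish $\buz$ from $\muz$.
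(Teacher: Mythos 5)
Your high-level skeleton (reduce by $\Omega$, split the triangular decomposition into PBW-injectivity over $\Q$ plus straightening-surjectivity, quote Subsection \ref{mixedsection} for the two bases of the imaginary part) matches the paper, but two of your central claims are wrong or missing, and they sit exactly where the content of the theorem lies.

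First, the structure of $\buz^{\pm}$. You assert that straightening products of the $(x^+_r)^{(k)}$ ``forces the normalization $\frac{1}{2}X^+_{2r+1}$ rather than $X^+_{2r+1}$'' and that this is why $\buz$ is defined with the half-integer generators. This cannot be right: by Theorem \ref{trmA22}, the algebra $\tuz$ generated by the $(x^\pm_r)^{(k)}$ alone has basis built from the full divided powers $(X^\pm_{2r+1})^{(k)}$, and $\tuz\subsetneq\buz$ strictly (see the remark following Proposition \ref{stbuzmuz}). If rank-one straightening forced the factor $\frac{1}{2}$, then $\buz$ would coincide with $\tuz$ and the theorem would be vacuous. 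The elements $(\frac{1}{2}X^\pm_{2r+1})^{(k)}$ are \emph{extra} generators, adjoined by hand; their presence is motivated by the higher-rank embedding of Section \ref{glue}, not by rank-one computations. The paper's actual argument for $\buz^{\pm}\cong \Z^{(div)}[x^\pm_{2r}]\otimes\Z^{(div)}[\frac{1}{2}X^\pm_{2r+1}]\otimes\Z^{(div)}[x^\pm_{2r+1}]$ is Lemma \ref{zdiv}: since $X^\pm_{2r+1}$ is central in $\tu^\pm$, one may rescale that central tensor factor of the decomposition of $\tuz^\pm$ without disturbing the others.

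Second, and more seriously, the imaginary part. You claim the crossing relations show that $\buz\cap\tu^{0,+}$ is generated by ``the elements $\hat h_r$ extracted \`a la Garland from the short-root divided powers'' together with the $\bar h_{2r}$, and you flag, without resolving, the question of why this lands exactly in $\Z^{(mix)}[h_r\mid r>0]$. But these are not the elements the crossings produce. Crossing $(x^+_r)^{(k)}$ with $(x^-_s)^{(l)}$ produces the elements $\tilde h_r$ of Theorem \ref{trmA22}, which generate the strictly smaller algebra $\tuz^{0,+}$ and differ from the $\hat h_r$ by the factor $\lambda_4(\hat h^+(-u^4)^{-1/2})$ of Remark \ref{leacca}; and crossing $(x^+_0)^{(k)}$ with $(\frac{1}{2}X^-_1)^{(l)}$ (Lemma \ref{nuoveadd}, Equation \eqref{eq:identuno}) produces the series $\hat h^+(2u^2v)^{\frac{1}{2}}$, whose coefficients are the twisted elements $\hat h^{\{c\}}_k$ with $c_r=2^{r-1}$, not the $\hat h_k$. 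The crux of the paper's proof is precisely that the $\Z$-algebra generated by the $\tilde h_r$, the $\hat h^{\{c\}}_k$ and the $\bar h_{2r}$ equals $\Z^{(mix)}[h_r\mid r>0]$ (Lemma \ref{toglitilde}), and this rests on the number-theoretic Theorem \ref{toglicerchietti}: via M\"obius inversion and Euler's congruence one checks $(2r+1)\mid(\mu\star c)(2r+1)$ and $(2r)\mid 2(\mu\star c)(2r)$, so that $\hat h^{\{c\}}_k\in\Z^{(mix)}[h_r\mid r>0]$ by Lemma \ref{quellochenonvolevo}. Without this step the straightening does not close inside $\buz^-\buz^{0,-}\buz^\h\buz^{0,+}\buz^+$, and your proposal contains no substitute for it.
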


\begin{theorem}\label{muztheorem}
The $\Z$-subalgebra $\muz$ of $\tu$ generated by \begin{align}
        \{(\frac{1}{2}X^+_{2r+1})^{(k)},(\frac{1}{2}X^-_{2r+1})^{(k)}, (x^+_{r})^{(k)},(x^-_{r})^{(k)},\check h_{s}\mid r \in \Z, s \in \Z^{*}, k\in \N\}
    \end{align}  is an integral form of $\tu$.
    More precisely
    \begin{align} \muz&\cong\buz^-\otimes\muz^{0,+}\otimes\muz^\h\otimes\muz^{0,+}\otimes\buz^{+},
    \end{align} where $\buz^- $ and $\buz^{+}$ are as in Theorem \ref{buztheorem}, $\muz^{0,\pm}=\Z[\check h_r\mid \pm r>0]$ with basis
$$\check B^{0,\pm}=\Big\{{{{\check h}}^{\pm\bf{k}}}=\prod_{l\in\N}{\check h_{\pm l}^{k_l}}\mid{\bf{k}}:\N\to\N\,\,{\rm{is\, finitely\, supported}}\Big\},$$
$\muz^\h=\Z^{(bin)}[h_0,\frac{c}{4}]$ has basis
$$B^\h=\Big\{
\binom{h_0}{k}
\binom{\frac{c}{4}}{\tilde k}
\mid k,\tilde k\in\N\Big\}.$$
\end{theorem}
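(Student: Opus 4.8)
The plan is to leverage the complete description of $\buz$ obtained in Theorem \ref{buztheorem}, together with the inclusions $\tuz\subseteq\buz\subseteq\muz\subseteq\tu$, and to show that passing from $\buz$ to $\muz$ only enlarges the imaginary Cartan and toral factors while leaving the real parts untouched. Concretely, the new generators $\check h_s$ ($s\in\Z^*$) all lie in $\tu^{0,+}\oplus\tu^{0,-}$, so I would first establish a triangular decomposition $\muz\cong\buz^-\otimes\muz^{0,-}\otimes\muz^\h\otimes\muz^{0,+}\otimes\buz^+$ compatible with the triangular decomposition of $\tu$, then identify the three middle factors, and finally deduce linear independence. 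Since $\buz\subseteq\muz\subseteq\tu$ and $\buz$ is an integral form by Theorem \ref{buztheorem}, tensoring with $\Q$ immediately shows that $\muz$ spans $\tu$, so the only real content is the explicit $\Z$-module structure.

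First I would prove that the real parts do not grow, i.e.\ $\muz\cap\tu^{+}=\buz^+$ and $\muz\cap\tu^-=\buz^-$. The key point is that $\operatorname{ad}(\check h_s)$ preserves $\buz^\pm$ over $\Z$: writing $\check h_s$ as a polynomial in the $\tfrac{h_t}{2}$ and using $[\tfrac{h_t}{2},x_u^\pm]=\pm(2+(-1)^{t-1})x_{t+u}^\pm$, which has integer coefficients, a Kostant--Garland divided-power argument shows that the adjoint action of each $\check h_s$ sends a divided power $(x_u^\pm)^{(k)}$, and likewise $(\tfrac12 X_{2u+1}^\pm)^{(k)}$, back into $\buz^\pm$ (the reorderings producing $X$-terms stay integral by the structure of $\buz^\pm$ established in Theorem \ref{buztheorem}). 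Consequently any monomial in the generators of $\muz$ can be rewritten, by moving the $\check h_s$ past the real root vectors, as a sum of products with all real factors collected into $\buz^\pm$ and all toral and imaginary factors in the middle; the $\sigma$- and $\Omega$-stability recorded in Proposition \ref{stbuzmuz} lets me treat the two signs symmetrically. This gives $\muz\cap\tu^\pm=\buz^\pm$ with the bases $B^\pm$ of Theorem \ref{buztheorem}.

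Next I would identify the middle factors. For the imaginary Cartan, the $h_r$ with $\pm r>0$ pairwise commute, so $\muz^{0,\pm}=\Z[\check h_r\mid \pm r>0]$ is a commutative polynomial algebra; by Lemma \ref{hatbarincec} this equals $\Z^{(sym)}[\tfrac{h_r}{2}\mid \pm r>0]$ and contains $\buz^{0,\pm}=\Z^{(mix)}[h_r\mid \pm r>0]$, with the $\check h_{\pm r}$ algebraically independent, yielding the basis $\check B^{0,\pm}$. For the toral factor $\muz^\h$ one must compute the brackets carrying positive imaginary elements across negative ones into $\tu^\h$: the brackets among the $\check h_{\pm r}$ contribute central elements governed by an explicit generating-function identity, and combining these with the diagonal relations $[\tfrac12 X^+_{2r+1},\tfrac12 X^-_{-2r-1}]=2h_0+(2r+1)c$ of the long-root divided powers pins down the toral lattice as $\Z^{(bin)}[h_0,\tfrac{c}{4}]$ with basis $B^\h$. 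I expect this precise normalization of the central element, namely that it is $\tfrac{c}{4}$ rather than a coarser lattice such as $\tfrac{c}{2}$, to be the main obstacle: it requires careful arithmetic bookkeeping of the rational coefficients produced by the $\tfrac12$-scaling in $\check h$ together with those coming from the divided powers of $\tfrac12 X^\pm_{2r+1}$.

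Finally, with the three middle factors and the two real factors in hand, the triangular decomposition follows from the straightening carried out in the previous steps, and it remains to check that the product basis $B^-\,\check B^{0,-}\,B^\h\,\check B^{0,+}\,B^+$ is $\Z$-linearly independent. This I would obtain by the standard dimension argument: over $\Q$ these monomials form a Poincar\'e--Birkhoff--Witt basis of $\tu$ adapted to its triangular decomposition, and since each tensor factor is a free $\Z$-module on the indicated basis, no nontrivial integral relation can hold. Together with the fact that $\muz$ spans $\tu$ over $\Q$ from the first paragraph, this establishes that $\muz$ is an integral form of $\tu$ with the asserted structure.
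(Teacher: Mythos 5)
Your overall architecture is the same as the paper's: straighten arbitrary products of generators into the triangular form $\buz^-\cdot(\text{imaginary/toral})\cdot\buz^+$, identify each factor, and conclude with a PBW-plus-freeness argument (the paper assembles this in Proposition \ref{muzubuzubo}, Proposition \ref{zeropiubarra}, Lemma \ref{nuoveadd} and Corollary \ref{ricondurre}, the last two assertions of which \emph{are} the theorem). However, there is a genuine gap: you never actually establish the positive-against-negative straightening identities for the real root vectors, and these are the computational heart of the theorem, not a bookkeeping afterthought. Concretely, three families of products of generators must be reordered: (i) $(x_r^+)^{(k)}(x_s^-)^{(l)}$, which is covered by Theorem \ref{trmA22} since these generate $\tuz$; (ii) the mixed products $(x_r^+)^{(k)}\bigl(\tfrac12 X_{2s+1}^-\bigr)^{(l)}$, which you do not mention at all — in the paper this is identity \eqref{eq:identuno} of Lemma \ref{nuoveadd}, and it is exactly where the factor $\hat h^+(2u^2v)^{\frac12}=\check h^+(2u^2v)$ appears, i.e.\ where one sees both that the $\check h$-elements are forced into the algebra and that the result still lies in $\muz^-\muz^0\muz^+$; and (iii) the long-against-long products $\bigl(\tfrac12 X_{2r+1}^+\bigr)^{(k)}\bigl(\tfrac12 X_{2s-1}^-\bigr)^{(l)}$, of which you only cite the Lie-algebra bracket $[\tfrac12X^+_{2r+1},\tfrac12X^-_{-2r-1}]=2h_0+(2r+1)c$ in the diagonal case. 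That bracket alone does not determine the lattice generated by the \emph{divided powers}: one needs the full exponential identity \eqref{eq:identtre}, whose right-hand side is $(1+4uv)^{\frac{h_0}{2}+\frac{(2r+1)c}{4}}$, as well as the off-diagonal identity \eqref{eq:identdue} producing $\lambda_{2(r+s)}\bigl(\hat h^+((u^rv^s)^2)^{\frac12}\bigr)$. The paper obtains all of these by rescaling ($u\mapsto\tfrac12u$, $v\mapsto\tfrac12v$) the $A_2^{(2)}$ identities of \cite{DP}, Appendix A, VII.

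The symptom of the gap is visible in your own text: you single out the normalization $\Z^{(bin)}[h_0,\tfrac c4]$ of the toral factor as ``the main obstacle'' requiring ``careful arithmetic bookkeeping'' and then defer it. But that bookkeeping \emph{is} the proof: without identity \eqref{eq:identtre} one cannot exclude that straightening produces elements outside $\Z^{(bin)}[h_0,\tfrac c4]$ (or, conversely, that the toral factor is coarser), and without identities \eqref{eq:identuno}--\eqref{eq:identdue} one cannot even assert that $\muz^+\muz^-\subseteq\muz^-\otimes\muz^0\otimes\muz^+$, which is the closure statement \eqref{eq:EUE} on which the whole triangular decomposition rests. Your remaining steps are sound and match the paper — the spanning over $\Q$ is immediate from $\buz\subseteq\muz$, moving $\check h_s$ past real divided powers is the content of Proposition \ref{zeropiubarra} (your ad-action argument is a reasonable substitute for the paper's conjugation formulas \eqref{eq:commupieta}--\eqref{eq:commupietagrande}), the identification $\muz^{0,\pm}=\Z[\check h_r\mid\pm r>0]$ via Lemma \ref{hatbarincec} is correct, and the final dimension/PBW count is the standard one — but as written the proposal reduces the theorem to precisely the computations it declines to perform.
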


\begin{remark}
$\buz^{0,\pm}=\Z^{(mix)}[h_r\mid \pm r>0]$.    
\end{remark}

\begin{remark}\label{leacca} 
The following identities holds in $\u[[u,v]]$ from Definition \ref{barradef} and Remark \ref{rembarceckbo}: \begin{align}     &\label{eq:hateta}\hat h^\pm(u)=\check h^\pm(u)^2,\\    &\label{eq:bareta}\bar h^\pm(u^2)=\check h^\pm(u)\check h^\pm(-u),\\     
&\tilde h^\pm(u)=\hat h^\pm(u)\lambda_{4} (\hat h^\pm(-u^4)^{-\frac{1}{2}})=\hat h^\pm(u)\lambda_2((\bar h^\pm(u^2)^{-1}))
.
    \end{align}
\end{remark}

\begin{proposition}\label{muzubuzubo}
The following identities holds in $\u[[u,v]]$:
\begin{align}\label{eq:etaeta}
&\check h^+(u)\check h^-(u)=\check h^-(u)(1-uv)^{c}(1+uv)^{-\frac{c}{2}}\check h^+(u);\\
&\hat h^+(u)\hat h^-(u)=\hat h^-(u)(1-uv)^{2c}(1+uv)^{-{c}}\hat h^+(u);\label{eq:capcap}\\
&\bar h^+(u^2)\bar h^-(v^2)=\bar h^-(v^2)(1-(uv)^2)^{2c}(1-(uv)^2)^{-c}\bar h^+(u^2);\label{eq:barbar}\\
&\hat h^+(u)\bar h^-(v^2)=\bar h^-(v^2)(1-(uv)^2)^c\hat h^+(u)\label{eq:capbar}.
\end{align}
In particular $\muz^0\cong \muz^{0,-}\otimes\muz^\h\otimes\muz^{0,+}\subseteq$ and 
$\buz^0\cong \buz^{0,-}\otimes\buz^\h\otimes\buz^{0,+}$, thus $\muz^{0}$ and $\buz^0$ are an integral forms of $\tu^0$.
\end{proposition}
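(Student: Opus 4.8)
The plan is to establish the four commutation identities at the level of generating series in $\u[[u,v]]$, and then to deduce the triangular decompositions and the integral-form claim as formal consequences. The heart of the matter is the first identity \eqref{eq:etaeta}, governing the interaction between the positive and negative imaginary series $\check h^\pm(u)$; the remaining three will follow from it by squaring, by the change of variable $u\mapsto u^2$, and by extracting the appropriate even/odd parts via the relations recorded in Remark \ref{leacca}.

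First I would compute the commutator of the underlying primitive elements. Recall from Definition \ref{barradef} that $\check h^\pm(u)=\exp\bigl(\tfrac12\sum_{r>0}(-1)^{r-1}\frac{h_{\pm r}}{r}u^r\bigr)$ up to the sign conventions, so each $\check h^\pm(u)$ is the exponential of a primitive series in the $h_{\pm r}$. The key input is the bracket $[h_r,h_s]=\delta_{r+s,0}\,2r(2+(-1)^{r-1})c$ from Definition \ref{a22}: since $c$ is central, the two primitive exponents commute up to a central scalar series. Applying the Baker--Campbell--Hausdorff formula in its degenerate form $e^{A}e^{B}=e^{B}e^{A}e^{[A,B]}$ (valid because $[A,B]$ is central), one reduces the problem to summing $\tfrac14\sum_{r>0}\frac{(-1)^{r-1}}{r}\cdot 2r(2+(-1)^{r-1})c\,(uv)^r$. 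Splitting the sum according to the parity of $r$ collapses it to $\log\bigl((1-uv)^{c}(1+uv)^{-c/2}\bigr)$, which is exactly the central correction factor appearing in \eqref{eq:etaeta}. This is the one genuine computation, and the main obstacle is bookkeeping the factor of $2+(-1)^{r-1}$ correctly across even and odd $r$; everything else is formal.

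Next I would derive \eqref{eq:capcap}, \eqref{eq:barbar} and \eqref{eq:capbar} from \eqref{eq:etaeta} without further bracket computations. For \eqref{eq:capcap} I use $\hat h^\pm(u)=\check h^\pm(u)^2$ from \eqref{eq:hateta}; squaring the scalar factor sends $(1-uv)^c(1+uv)^{-c/2}$ to its square, giving the stated $(1-uv)^{2c}(1+uv)^{-c}$. For \eqref{eq:barbar} and \eqref{eq:capbar} I use $\bar h^\pm(u^2)=\check h^\pm(u)\check h^\pm(-u)$ from \eqref{eq:bareta}, moving the four factors past one another one at a time via \eqref{eq:etaeta} and collecting the scalar contributions; the substitutions $v\mapsto -v$ in half the factors produce the even powers $(uv)^2$ and the exponents combine to the stated values. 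Here the care needed is again purely in tracking signs and the central exponents through the successive swaps.

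Finally, the structural conclusion follows formally. The commutation relations show that, modulo the central factors (which lie in $\muz^\h$, respectively $\buz^\h$, since $c$ is a generator there), reordering a product of positive and negative imaginary series never introduces denominators: the correction series $(1\mp uv)^{ac}$ expand integrally as binomial series in $\binom{c}{k}$ and $\binom{c/4}{k}$, which are precisely the basis elements of $\muz^\h=\Z^{(bin)}[h_0,\tfrac c4]$ and $\buz^\h=\Z^{(bin)}[h_0,c]$. Hence multiplication gives a well-defined surjection $\muz^{0,-}\otimes\muz^\h\otimes\muz^{0,+}\to\muz^0$ (and likewise for $\buz$), and it is injective because the images lie in distinct weight spaces of $\u^0$ under the grading by $\delta$-degree, so no nontrivial cancellation can occur. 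Since $\muz^{0,\pm}=\Z[\check h_r\mid \pm r>0]$ and $\buz^{0,\pm}=\Z^{(mix)}[h_r\mid \pm r>0]$ are integral forms of $\tu^{0,\pm}$ by Theorems \ref{lambdabaseth} and \ref{polbasis}, the tensor product is an integral form of $\tu^0$, completing the proof.
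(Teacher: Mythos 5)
Your proposal is correct and takes essentially the same route as the paper: the paper obtains \eqref{eq:etaeta} by citing \cite{DP}, Proposition 2.4 under the rescaling $h_r\mapsto\tfrac12 h_r$, $c\mapsto\tfrac{c}{2}$, which encapsulates precisely the central Baker--Campbell--Hausdorff computation you carry out by hand, and it likewise deduces \eqref{eq:capcap}, \eqref{eq:barbar} and \eqref{eq:capbar} from Remark \ref{leacca} exactly as you do. The only differences are cosmetic: your displayed sum has a slip (the two coefficients $(-1)^{r-1}$ from the exponents of $\check h^+(u)$ and $\check h^-(v)$ multiply to $1$ and the denominator should be $r^2$, so the exponent is $\tfrac14\sum_{r>0}\frac{2r(2+(-1)^{r-1})}{r^2}\,c\,(uv)^r$, with the final sign of the central factor depending on the convention for $\check h^-$ inherited from \cite{DP}), and you make explicit the normal-ordering and binomial-integrality argument for the triangular decomposition, which the paper leaves implicit.
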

\begin{proof}
Equation \eqref{eq:etaeta} follows from \cite{DP},Proposition 2.4  with $m=1$, $l=\frac{1}{2}$ by substituting $\frac{c}{2}$ in place of $c$ observing that 
$[\frac{1}{2}h_r,\frac{1}{2}h_s]=\delta_{r+s,0}r(2+(-1)^{r-1})\frac{1}{2}c$.
    Equations \eqref{eq:capcap}, \eqref{eq:barbar} and \eqref{eq:capbar}  follow from Equation \eqref{eq:etaeta} and Remark \ref{leacca}.
\end{proof}

\begin{remark}\label{remtildecheck}
It is worth underling that \begin{align}
&\uz^{0,\pm}\subsetneq\buz^{0,\pm}\subsetneq\muz^{0,\pm}\quad \tuz^{\h}\subsetneq\muz^{\h}.\label{eq:tuzmuzh}
    \end{align}
The first inclusion follows from \cite{DP},Remark 5.13 and Remark \ref{leacca}; the second inclusion  follows from Lemma \ref{chechdiverso}; the third is obvious, e.g. $\frac{c}{2}\not\in \Z^{(bin)}[h_0,c]$.
\end{remark}

\begin{lemma}\label{toglitilde}
 \begin{align}\label{eq:toglitilde}
 \Z(\tilde h_r,\hat h_r^{\{c\}}, \bar h_{2r}\mid \pm r>0 )= \Z^{(mix)}[h_r\mid \pm r>0],
\end{align} 
in particular $\tuz^{0,-}\tuz^{0,+}\subseteq\tuz^0$ and 
    $\tuz^0$ is an integral form of $\tu^0$.
\begin{proof}
    
Let us observe  that
 thus \begin{align}
        \Z( \bar h_{r},\hat h^{\{c\}}, \tilde h_r\mid r>0)=\Z( \bar h_{r},\hat h^{\{c\}}, \tilde h_r\mid r>0),
    \end{align}
moreover Theorem \ref{toglicerchietti}  it follows that $\Z(\hat h_r, \hat h_r^{\{c\}}, \bar h_{2r}, \mid \pm r>0 )=\Z^{(mix)}[h_r\mid\pm r>0]$.
The last assertion follows from Relation \eqref{eq:toglitilde} and Proposition \ref{muzubuzubo}.
\end{proof}
\end{lemma}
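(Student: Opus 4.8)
The plan is to establish the ring identity \eqref{eq:toglitilde} first and then read off the structural consequences for $\tuz^0$ from it together with Proposition \ref{muzubuzubo}. Throughout I work for a fixed sign of $r$, the other being symmetric. By Definition \ref{mixdef} and Remark \ref{rembarceckbo} one has $\Z^{(mix)}[h_r\mid \pm r>0]=\Z[\hat h_r,\bar h_r\mid\pm r>0]=\Z[\hat h_r,\bar h_{2r}\mid\pm r>0]$, the last step because $\bar h_{2r+1}=0$. Theorem \ref{toglicerchietti} gives $\hat h_r^{\{c\}}\in\Z^{(mix)}[h_r\mid\pm r>0]$, so $\hat h_r^{\{c\}}$ is a redundant generator and it suffices to prove $\Z(\tilde h_r,\bar h_{2r}\mid\pm r>0)=\Z(\hat h_r,\bar h_{2r}\mid\pm r>0)$.

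For this I would exploit the identity $\tilde h^\pm(u)=\hat h^\pm(u)\,\lambda_2(\bar h^\pm(u^2)^{-1})$ of Remark \ref{leacca}. The crucial observation is that the correcting series $g^\pm(u):=\lambda_2(\bar h^\pm(u^2)^{-1})$ has constant term $1$ and all of its coefficients in $\Z[\bar h_{2r}\mid\pm r>0]$: inverting the unit series $\bar h^\pm(u^2)$ keeps its coefficients in $\Z[\bar h_{2r}]$, and $\Z[\bar h_{2r}]=\Z^{(sym)}[\tfrac{h_{2r}}{2}]$ is $\lambda_2$-stable by Remark \ref{rembarceckbo} together with Theorem \ref{bun} and Remark \ref{lambdastabilita}. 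Grading by $\deg h_{\pm r}=r$ for $r>0$, the relation then reads $\tilde h^\pm_r=\hat h^\pm_r+(\text{lower-order terms with coefficients in }\Z[\bar h_{2r}])$, which is unitriangular and hence invertible over $\Z[\bar h_{2r}]$ in both directions. This yields $\Z[\bar h_{2r}][\tilde h_r]=\Z[\bar h_{2r}][\hat h_r]$ and, combined with the previous paragraph, proves \eqref{eq:toglitilde}. I expect this $\lambda_2$-stability/unitriangularity step to be the main obstacle, as it is exactly the point where one must rule out denominators when passing between the loop-type generators $\tilde h_r$ and the symmetric generators $\hat h_r$.

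For the in-particular clause I would combine \eqref{eq:toglitilde} with the commutation relations \eqref{eq:capcap}, \eqref{eq:barbar} and \eqref{eq:capbar} of Proposition \ref{muzubuzubo}. Each of these moves a positive generating function past a negative one at the cost of a central factor that is an integer power of $c$, namely a product of factors $(1-uv)^{2c}(1+uv)^{-c}$ and $(1-(uv)^2)^{c}$, whose expansions have coefficients $\binom{mc}{k}\in\Z^{(bin)}[h_0,c]=\tuz^\h$ (by the Vandermonde identity $\binom{2c}{k}=\sum_{i+j=k}\binom{c}{i}\binom{c}{j}$ and the analogue for $\binom{-c}{k}$). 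Since by \eqref{eq:toglitilde} the $\tilde h_r$ lie in the algebra generated by $\hat h_r$ and $\bar h_{2r}$, reordering a product of positive $\tilde h$'s past negative $\tilde h$'s produces only such central corrections, giving $\tuz^{0,+}\tuz^{0,-}\subseteq\tuz^{0,-}\tuz^\h\tuz^{0,+}$ and hence $\tuz^{0,-}\tuz^{0,+}\subseteq\tuz^0$. The ordered product $\tuz^{0,-}\tuz^\h\tuz^{0,+}$ is thus a $\Z$-subalgebra equal to $\tuz^0$; it is free over $\Z$ on the evident monomials and its $\Q$-span realises the triangular decomposition $\tu^0\cong\tu^{0,-}\otimes\tu^\h\otimes\tu^{0,+}$, so that $\tuz^0\cong\tuz^{0,-}\otimes\tuz^\h\otimes\tuz^{0,+}$ is an integral form of $\tu^0$.
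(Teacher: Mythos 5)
Your proof of the identity \eqref{eq:toglitilde} is correct and follows the same route the paper intends: Theorem \ref{toglicerchietti} makes $\hat h_r^{\{c\}}$ a redundant generator, and the factorization $\tilde h^\pm(u)=\hat h^\pm(u)\lambda_2(\bar h^\pm(u^2)^{-1})$ of Remark \ref{leacca} lets you trade $\tilde h_{\pm r}$ for $\hat h_{\pm r}$ over $\Z[\bar h_{\pm 2k}\mid k>0]$; your unitriangularity and $\lambda_2$-stability details fill in precisely what the paper's (garbled) first display leaves implicit.

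The third paragraph, however, has a genuine gap. Once you replace the positive $\tilde h$'s by polynomials in the $\hat h_r,\bar h_{2r}$ and the negative ones by polynomials in the $\hat h_{-r},\bar h_{-2r}$, straightening with \eqref{eq:capcap}, \eqref{eq:barbar} and \eqref{eq:capbar} produces outer factors lying in $\buz^{0,-}$ and $\buz^{0,+}$, i.e.\ in $\Z^{(mix)}[h_r\mid \pm r>0]$, and there is no way back: by Remark \ref{remtildecheck} one has $\tuz^{0,\pm}=\Z[\tilde h_{\pm r}\mid r>0]\subsetneq\buz^{0,\pm}$, so those factors need not be expressible through the $\tilde h$'s again. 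Your argument therefore establishes only $\tuz^{0,+}\tuz^{0,-}\subseteq\buz^{0,-}\tuz^{\h}\buz^{0,+}\subseteq\buz^{0}$, not the claimed $\tuz^{0,+}\tuz^{0,-}\subseteq\tuz^{0,-}\tuz^{\h}\tuz^{0,+}$, and hence not that the ordered $\tilde h$-monomials are a $\Z$-basis of $\tuz^0$. (Note also that the inclusion $\tuz^{0,-}\tuz^{0,+}\subseteq\tuz^0$ asserted in the lemma is automatic, since $\tuz^0$ is by definition generated by $\tuz^{0,\pm}$ and $\tuz^{\h}$.)

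The conclusion the lemma actually requires can still be recovered from what you proved, and this is the reading consistent with the paper's one-line deduction: \eqref{eq:toglitilde} gives $\tuz^{0}\subseteq\buz^{0}$; Proposition \ref{muzubuzubo} exhibits $\buz^{0}\cong\buz^{0,-}\otimes\buz^{\h}\otimes\buz^{0,+}$ as a free $\Z$-module spanning $\tu^0$ over $\Q$; every $\Z$-submodule of a free $\Z$-module is free; and $\tuz^0$ visibly spans $\tu^0$ over $\Q$, so it is an integral form. If instead you insist on the triangular decomposition of $\tuz^0$ itself, you must commute $\tilde h^+(u)$ past $\tilde h^-(v)$ directly: both are exponential series in the $h_{\pm r}$, so the commutation costs a single central series $C(uv)$, whose coefficients must then be checked to lie in $\Z^{(bin)}[h_0,c]$. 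That computation is exactly the content of Theorem \ref{trmA22} (i.e.\ of \cite{DP}) and does not follow from Proposition \ref{muzubuzubo} by the substitution you propose.
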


\begin{lemma}\label{zdiv}
$\buz^{\pm}$ is an integral form of $\u^+$, more precisely \begin{align}
    \buz^{\pm}&\cong \Z^{(div)}[x^\pm_{2r}\mid r\in\Z]\otimes\Z^{(div)}[\frac{1}{2}X^\pm_{2r+1}\mid r\in\Z] \otimes\Z^{(div)}[x^\pm_{2r+1}\mid \pm r\in\Z]
\end{align} and $\uz^{\pm}\subsetneq\buz^{\pm}$
\end{lemma}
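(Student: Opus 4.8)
The plan is to prove the statement for the $+$ sign and then deduce the $-$ case at the very end by applying the antiautomorphism $\Omega$, which by Proposition~\ref{stbuzmuz} satisfies $\Omega(\buz^+)=\buz^-$ and which interchanges $\u^+$ and $\u^-$. So I concentrate on $\buz^+$. The starting point is the bracket structure of the generators coming from Definition~\ref{a22}: the only nonzero brackets among them are $[x_r^+,x_s^+]=(-1)^sX_{r+s}^+$ for $r+s$ odd, the $X_{2r+1}^+$ are central in $\u^+$, and the even-indexed $x^+$'s commute among themselves, as do the odd-indexed ones (in both cases the index sum is even). Consequently each of the three factors $\Z^{(div)}[x^+_{2r}\mid r\in\Z]$, $\Z^{(div)}[\tfrac12 X^+_{2r+1}\mid r\in\Z]$ and $\Z^{(div)}[x^+_{2r+1}\mid r\in\Z]$ is a divided-power polynomial algebra in a family of pairwise commuting (respectively central) elements, hence a free $\Z$-module with the divided-power monomial bases $B^{+,0}$, $B^{+,c}$, $B^{+,1}$ listed in Theorem~\ref{buztheorem}; moreover each ordered product in $B^{+,1}B^{+,c}B^{+,0}$ manifestly lies in $\buz^+$.

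The two facts I then need are that the ordered products $B^+=B^{+,1}B^{+,c}B^{+,0}$ are $\Z$-linearly independent with $\Q$-span all of $\u^+$, and that they $\Z$-span $\buz^+$. The first is inherited for free from the already established case $\tuz^+$ of Theorem~\ref{trmA22}: since $\tuz^+$ is an integral form of $\u^+$, its basis $\Q$-spans $\u^+$, and each monomial of $B^+$ is a nonzero rational multiple of the corresponding $\tuz^+$-basis monomial (the two agree outside the central factor, where $(\tfrac12 X^+_{2r+1})^{(k)}=2^{-k}(X^+_{2r+1})^{(k)}$). Hence $B^+$ is $\Q$-linearly independent and $\Q$-spans $\u^+$.

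The heart of the argument, and the step I expect to be the main obstacle, is the closure of the $\Z$-span of $B^+$ under multiplication, i.e.\ the straightening. Here I would use the Heisenberg-type divided-power identity
\begin{align}
(x_r^+)^{(a)}(x_s^+)^{(b)}=\sum_{k\ge0}(x_s^+)^{(b-k)}\bigl((-1)^sX_{r+s}^+\bigr)^{(k)}(x_r^+)^{(a-k)}\qquad(r+s\text{ odd}),
\end{align}
valid because $Z:=(-1)^sX_{r+s}^+$ is central, together with the integrality rewriting $\bigl((-1)^sX_{r+s}^+\bigr)^{(k)}=(-1)^{sk}2^{k}(\tfrac12 X^+_{r+s})^{(k)}$, which shows that the coefficients produced by each reordering remain integral in the generators of $\buz^+$. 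Since the central $X^+$'s may be moved freely and the even--even and odd--odd pairs already commute, an induction (on the number of ``odd-index-left-of-even-index'' inversions, refined by total degree) reduces any product of generators to a $\Z$-linear combination of ordered monomials in $B^{+,1}B^{+,c}B^{+,0}$. This is exactly the point where one adapts to $\tfrac12 X^+$ the straightening relations already developed for $\tuz^+$ in \cite{DP}; cf.\ Lemma~\ref{nuoveadd} and Proposition~\ref{muzubuzubo}.

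Combining these, $\buz^+$ is a free $\Z$-module with basis $B^+$, the multiplication map out of the triple tensor product is a $\Z$-module isomorphism realizing the stated factorization, and $\Q\otimes_\Z\buz^+=\u^+$, so $\buz^+$ is an integral form; the $-$ statement then follows by applying $\Omega$. Finally, for the strict inclusion $\uz^+\subsetneq\buz^+$ (where $\uz^+=\tuz^+$), the inclusion is clear since the generators $(x_r^+)^{(k)}$ of $\uz^+$ lie in $\buz^+$; strictness holds because $\tfrac12 X^+_{2r+1}\in\buz^+$, whereas $X^+_{2r+1}$ is a basis element of $\tuz^+$ by Theorem~\ref{trmA22}, so $\tfrac12 X^+_{2r+1}\notin\tuz^+=\uz^+$.
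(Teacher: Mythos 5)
Your proof is correct and takes essentially the same route as the paper: the paper's own proof is a one-line appeal to Theorem~\ref{trmA22} together with the observation that $X^{\pm}_{2r+1}$ is central in $\tuz^{\pm}$, which is precisely the combination you spell out in detail (rank-1 structure theorem for span and independence, plus centrality-based straightening with the rescaling $(X^{+}_{2r+1})^{(k)}=2^{k}(\tfrac{1}{2}X^{+}_{2r+1})^{(k)}$ to keep coefficients integral). Your explicit divided-power Heisenberg identity and the strictness argument for $\uz^{\pm}\subsetneq\buz^{\pm}$ are exactly the details the paper leaves implicit.
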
\begin{proof}
The claim follows from Theorem \ref{trmA22} observing that $X^\pm_{2r+1}$ is central in $\tuz^\pm$.
\end{proof}

\begin{proposition} \label{buzaccazero}
The following identity holds in $\tu$ $\forall k\in \N$ and $\forall r\in \Z$:
\begin{align}
    &(\frac{1}{2}X_{2r+1}^\pm)^{(k)}\binom{h_0}{l}=\binom{h_0\mp4k}{ l}(\frac{1}{2}X_{2r+1}^\pm)^{(k)}\label{eq:xgrandicartanmezzi},
\end{align}
in particular $\buz^{\pm}\tuz^\h=\tuz^\h\buz^{\pm}$.
\end{proposition}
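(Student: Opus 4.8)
The plan is to reduce the whole statement to a single eigenvalue computation for $\mathrm{ad}\,h_0$. From the defining relations of $\tu$ in Definition~\ref{a22}, since $0$ is even we have
\begin{align}
[h_0,X_{2r+1}^\pm]=\pm4\,X_{2r+1}^\pm\qquad\forall\,r\in\Z,
\end{align}
so $\frac{1}{2}X_{2r+1}^\pm$ is an eigenvector of $\mathrm{ad}\,h_0$ of eigenvalue $\pm4$. Rewriting this as $X_{2r+1}^\pm h_0=(h_0\mp4)X_{2r+1}^\pm$ and inducting on $n$, I would first establish $X_{2r+1}^\pm\,h_0^{\,n}=(h_0\mp4)^{\,n}\,X_{2r+1}^\pm$, and hence $X_{2r+1}^\pm\,p(h_0)=p(h_0\mp4)\,X_{2r+1}^\pm$ for every polynomial $p\in\Q[h_0]$.

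Next I would pass to divided powers. The relation $[X_r^\pm,X_s^\pm]=0$ shows that the $X_{2r+1}^\pm$ commute among themselves, so $(\frac{1}{2}X_{2r+1}^\pm)^{(k)}=\frac{1}{2^k k!}(X_{2r+1}^\pm)^k$ and $(X_{2r+1}^\pm)^k$ is again an $\mathrm{ad}\,h_0$-eigenvector, now of eigenvalue $\pm4k$ since eigenvalues add under multiplication. Applying the single-factor identity $k$ times gives
\begin{align}
(X_{2r+1}^\pm)^k\,p(h_0)=p(h_0\mp4k)\,(X_{2r+1}^\pm)^k.
\end{align}
Dividing by the scalar $2^k k!$ (which commutes with $p(h_0)$) and specializing $p(h_0)=\binom{h_0}{l}$ yields exactly \eqref{eq:xgrandicartanmezzi}.

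For the final assertion $\buz^{\pm}\tuz^\h=\tuz^\h\buz^{\pm}$, I would combine \eqref{eq:xgrandicartanmezzi} with the analogous relation for the other generators of $\buz^{\pm}$: from Definition~\ref{a22} one has $[h_0,x_s^\pm]=\pm2\,x_s^\pm$ (since $2+(-1)^{-1}=1$), which in the same way gives $(x_r^\pm)^{(k)}\binom{h_0}{l}=\binom{h_0\mp2k}{l}(x_r^\pm)^{(k)}$. As $c$ is central it commutes with every generator of $\buz^{\pm}$, so the factor $\binom{c}{\tilde k}$ of a basis element of $\tuz^\h=\Z^{(bin)}[h_0,c]$ may be moved freely; it remains only to check that the shifted Cartan binomials stay inside $\tuz^\h$. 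By the Chu--Vandermonde identity $\binom{h_0\mp m}{l}=\sum_{j}\binom{h_0}{j}\binom{\mp m}{l-j}$ is a $\Z$-linear combination of the $\binom{h_0}{j}$, hence lies in $\tuz^\h$. Consequently every generator of $\buz^{\pm}$ can be moved past any basis element of $\tuz^\h$ at the cost of replacing the $\tuz^\h$-factor by another element of $\tuz^\h$, giving both inclusions $\buz^{\pm}\tuz^\h\subseteq\tuz^\h\buz^{\pm}$ and $\tuz^\h\buz^{\pm}\subseteq\buz^{\pm}\tuz^\h$.

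I expect no serious obstacle: the substance is the weight computation together with the integrality of shifted binomials via Chu--Vandermonde, plus the mild bookkeeping needed to extend the single-generator identity to the full generating set of $\buz^{\pm}$ and to all of $\tuz^\h$. The one point to state carefully is that divided powers, not ordinary powers, appear, so that the scalar $2^k k!$ must be dealt with by commutativity of the $X_{2r+1}^\pm$ and otherwise plays no role beyond being central.
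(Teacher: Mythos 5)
Your proof is correct, and its mathematical content is exactly what underlies the paper's proof; the difference is one of sourcing rather than of ideas. The paper proves \eqref{eq:xgrandicartanmezzi} by taking the ready-made identity $(X_{2r+1}^\pm)^{(k)}\binom{h_0}{l}=\binom{h_0\mp4k}{l}(X_{2r+1}^\pm)^{(k)}$ from \cite{DP} (Appendix A, V)) and multiplying both sides by $(\tfrac{1}{2})^k$, and it settles $\buz^{\pm}\tuz^\h=\tuz^\h\buz^{\pm}$ by citing \cite{DP}, Proposition 5.24 (which supplies the analogous commutation for the generators $(x_r^\pm)^{(k)}$) together with Lemma \ref{zdiv}. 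You instead derive everything from the defining relations of Definition \ref{a22}: the $\mathrm{ad}\,h_0$-eigenvalues $\pm4$ for $X_{2r+1}^\pm$ and $\pm2$ for $x_s^\pm$, the resulting polynomial shift $a^k\,p(h_0)=p(h_0\mp 4k)\,a^k$, and Chu--Vandermonde to check that the shifted binomials $\binom{h_0\mp m}{l}$ remain in $\tuz^\h=\Z^{(bin)}[h_0,c]$, which gives both inclusions. What your version buys is self-containedness: the weight-shift identity and the integrality of shifted binomials are verified rather than imported, which a reader of this paper alone cannot do for the cited items; the paper's version is shorter and consistent with its systematic reliance on \cite{DP}. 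One cosmetic remark: the relation $[X_r^\pm,X_s^\pm]=0$ is not actually needed to write $(\tfrac{1}{2}X_{2r+1}^\pm)^{(k)}=\tfrac{1}{2^k k!}(X_{2r+1}^\pm)^k$, since this is a power of a single element and the equality is just the definition of divided powers.
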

\begin{proof}
Equation \eqref{eq:xgrandicartanmezzi} follows from \cite{DP}, Appendix A, V) by multiplying both side by $(\frac{1}{2})^k$.
The claim follows by \cite{DP},Proposition 5.24 and Equation \eqref{eq:xgrandicartanmezzi}. The last equality follows from Lemma \ref{zdiv} and \cite{DP}, Proposition 5.24.
\end{proof}

\begin{proposition}\label{zeropiubarra}
The following relations hold in $\tu[[u]]$
\begin{align}
    &x_{0}^+\check h^+(u)=\check h^+(u)(1-T^{-1}u)(1-T^{-2}u^2)^{-3}(x_{0}^+)\label{eq:commupieta}\\&X_{1}^+\check h^+(u)=\check h^+(u)(1-T^{-1}u^2)^{-1}(X_{1}^+)\label{eq:commupietagrande}\\
    \end{align}
    hence for all $k\ge 0$
    \begin{align}
    &\big(x_{0}^+\big)^{(k)}\check h^+(u)=\check h^+(u)\big((1-T^{-1}u)(1-T^{-2}u^2)^{-3}(x_{0}^+)\big)^{(k)}\in \muz^{0,+}\buz^+[[u]] \label{eq:kuno}\\&\big(\frac{1}{2}X_{1}^+\big)^{(k)}\check h^+(u)=\check h^+(u)\big((1-T^{-1}u^2)^{-1}\frac{1}{2}(X_{1}^+)\big)^{(k)}\in \muz^{0,+}\buz^+[[u]]
    \label{eq:kunoo}\\
    \\\end{align} and
        \begin{align}
&\big(x_{0}^+\big)^{(k)}\hat h^+(u)=\hat h^+(u)\big((1-T^{-1}u)^{-2}(1-T^{-2}u^2)^{-6}(x_{0}^+)\big)^{(k)}
\in \buz^{0,+}\buz^+[[u]], \label{eq:huno}\\&
\big(\frac{1}{2}X_{1}^+\big)^{(k)}\hat h^+(u)=\hat h^+(u)\big((1-T^{-1}u^2)^{-2}\frac{1}{2}(X_{1}^+)\big)^{(k)}\in \buz^{0,+}\buz^+[[u]],\label{eq:hunoo}\\
&\big(x_{0}^+\big)^{(k)}\bar h^+(u)=
\bar h^+(u)\big(
(1-T^{-2}u^2)^{-5}
(x_{0}^+)\big)^{(k)}
\in \buz^{0,+}\buz^+[[u]], \label{eq:buno}\\
&\big(\frac{1}{2}X_{1}^+\big)^{(k)}\bar h^+(u)=\bar h^+(u)\big((1-T^{-1}u^2)^{-2}\frac{1}{2}(X_{1}^+)\big)^{(k)}\in \muz^{0,+}\buz^+[[u]].
    \label{eq:bunoo}
\end{align}
In particular 
\begin{align}
    \buz^+\muz^{0,\pm}\subseteq \muz^{0,\pm} \buz^+,\label{eq: buzpiumuzzeropiu}\\
        \buz^-\muz^{0,\pm}\subseteq \muz^{0,\pm} \buz^-,\label{eq: buzpiumuzzeropiudue}\\
        \buz^+\buz^{0,\pm}\subseteq \buz^{0,\pm} \buz^+,\label{eq: altrauno}\\
        \buz^-\buz^{0,\pm}\subseteq \buz^{0,\pm} \buz^-\label{eq: altradue}\\
    \end{align}
thus $\muz^0\buz^+$ and $\buz^0\buz^+$  (respectively $\buz^-\muz^0$ and $\buz^-\buz^0$) are 
 integral form of $\tu^0\tu^+$ (respectively of $\tu^-\tu^0$).
    \end{proposition}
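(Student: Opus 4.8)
The plan is to prove the two displayed generating-function identities first, to deduce the $\hat h^+$ and $\bar h^+$ versions from them by an elementary manipulation, and then to pass to divided powers and read off the four containment statements together with the integral-form conclusion. Writing $\check h^+(u)=\exp\!\big(\sum_{r>0}(-1)^{r-1}\tfrac{h_r}{2r}u^r\big)$ (Definition \ref{barradef}), conjugation by this grouplike series is computed by the adjoint-exponential formula $\check h^+(u)^{-1}a\,\check h^+(u)=\exp(-\mathrm{ad}_H)(a)$ with $H=\sum_{r>0}(-1)^{r-1}\tfrac{h_r}{2r}u^r$. Feeding in $[h_r,x_s^+]=2(2+(-1)^{r-1})x_{r+s}^+$ and $[h_r,X_s^+]=\pm4X_{r+s}^+$ for $r$ even (and $0$ for $r$ odd) from Definition \ref{a22}, one checks that $\mathrm{ad}_H$ acts on the span of the $x_j^+$ (resp.\ of the $X_{2j+1}^+$) as a power series in the index-shift operator written $T^{-1}$, and summing the resulting geometric-type series yields exactly $(1-T^{-1}u)(1-T^{-2}u^2)^{-3}(x_0^+)$ and $(1-T^{-1}u^2)^{-1}(X_1^+)$. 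This is the computation of \cite{DP}, Proposition 2.4, with the extra factor $\tfrac12$ coming from $\check h=\hat h^{\frac12\uno}$. Since the conjugating operator is a power series in the shift $T^{-1}$ alone, it is \emph{uniform in the index}, so the identities for $x_0^+$ and $X_1^+$ already encode the analogues for every $x_r^+$ and $X_{2r+1}^+$.

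\textbf{The $\hat h^+$ and $\bar h^+$ identities.} By Remark \ref{leacca} one has $\hat h^+(u)=\check h^+(u)^2$ and $\bar h^+(u^2)=\check h^+(u)\check h^+(-u)$. Conjugation past $\check h^+(u)^2$ applies the operator above twice, giving $(1-T^{-1}u)^2(1-T^{-2}u^2)^{-6}$ on $x_0^+$ and $(1-T^{-1}u^2)^{-2}$ on $X_1^+$; conjugation past $\check h^+(u)\check h^+(-u)$ applies the $u$- and $(-u)$-operators in succession, and the telescoping $(1-T^{-1}u)(1+T^{-1}u)=(1-T^{-2}u^2)$ collapses the $x_0^+$ factor to $(1-T^{-2}u^2)^{-5}$, while the $X_1^+$ factor, depending only on $u^2$, squares to $(1-T^{-1}u^2)^{-2}$. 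This reproduces Equations \eqref{eq:huno}--\eqref{eq:bunoo} at the Lie-bracket level.

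\textbf{Divided powers and integrality.} For each relation $a\,\check h^+(u)=\check h^+(u)\,g(u)$, conjugation by the invertible grouplike series $\check h^+(u)$ is a $\Q[[u]]$-algebra automorphism of $\tu[[u]]$, hence commutes with divided powers, giving $a^{(k)}\check h^+(u)=\check h^+(u)\,g(u)^{(k)}$ formally over $\Q$ for every $k$. The genuine content is the membership $g(u)^{(k)}\in\buz^+[[u]]$ (and likewise for the $\hat h^+,\bar h^+$ cases), and here lies the main obstacle: the coefficients of $g(u)$ are integer combinations of the $x_m^+$, which do \emph{not} commute, since $[x_r^+,x_s^+]=\pm(-1)^sX_{r+s}^+$ for $r+s$ odd, so expanding $g(u)^{(k)}=g(u)^k/k!$ produces cross-brackets involving the $X_{2r+1}^+$. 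To control these I would straighten $g(u)^k$ into the PBW-type basis $B^{+,1}B^{+,c}B^{+,0}$ of $\buz^+$ furnished by Lemma \ref{zdiv} and Theorem \ref{buztheorem}, using the relations of Definition \ref{a22}; the crucial point is that it is precisely the normalization $\tfrac12X_{2r+1}^+$ (rather than $X_{2r+1}^+$) that clears the denominators and keeps all coefficients integral, which is why the relations land in $\buz^+$ and not merely in $\tuz^+$. The bookkeeping parallels the $\tuz$ computation of \cite{DP} (cf.\ Proposition \ref{buzaccazero}), now carried out with the extra central generators $\tfrac12X_{2r+1}^+$.

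\textbf{Containments and integral forms.} The inclusions \eqref{eq: buzpiumuzzeropiu}--\eqref{eq: altradue} follow by reading the divided-power relations on algebra generators: moving any generator of $\buz^+$ past $\check h^+(u)$ lands in $\muz^{0,+}\buz^+[[u]]$ (so $\buz^+\muz^{0,+}\subseteq\muz^{0,+}\buz^+$), and past $\hat h^+(u),\bar h^+(u)$ in $\buz^{0,+}\buz^+[[u]]$ (so $\buz^+\buz^{0,+}\subseteq\buz^{0,+}\buz^+$), with the $\h$-part supplied by Proposition \ref{buzaccazero}. Applying the antiautomorphism $\sigma$, which fixes the $x_r^\pm$ and $h_r$ (Remark \ref{tsb}, Proposition \ref{stbuzmuz}), reverses each product and thereby upgrades every such inclusion to an equality, while applying $\Omega$, which exchanges $+\leftrightarrow-$ and $\muz^{0,+}\leftrightarrow\muz^{0,-}$, $\buz^{0,+}\leftrightarrow\buz^{0,-}$, transports the statements to $\buz^-$ and to $\muz^{0,-},\buz^{0,-}$, yielding all four displayed containments. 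Finally, $\muz^0$ and $\buz^0$ are integral forms of $\tu^0$ (Proposition \ref{muzubuzubo}) and $\buz^\pm$ of $\tu^\pm$ (Lemma \ref{zdiv}); the reordering relations just proved show that $\muz^0\buz^+$ and $\buz^0\buz^+$ are $\Z$-subalgebras whose $\Q$-spans are $\tu^0\tu^+$, and the product of the corresponding bases is $\Z$-linearly independent by the triangularity of the commutation relations against the $\Q$-PBW basis. Hence these are integral forms of $\tu^0\tu^+$, and symmetrically $\buz^-\muz^0,\buz^-\buz^0$ are integral forms of $\tu^-\tu^0$.
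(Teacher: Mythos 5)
Your proposal is correct and follows the same skeleton as the paper's proof: Lie-level conjugation identities for $\check h^+(u)$, passage to divided powers via the fact that conjugation by the invertible series $\check h^+(u)$ is an algebra automorphism, the index-shift/$T$-stability argument to pass from $x_0^+$, $X_1^+$ to all $x_r^+$, $X_{2r+1}^+$, the antiautomorphism $\Omega$ for the negative half, and reading the containments off generators. The differences lie in what you make explicit, and they are worth recording. The paper obtains \eqref{eq:commupieta} and \eqref{eq:commupietagrande} by citing \cite{DP}, Proposition 2.14, and then asserts that \eqref{eq:kuno} and \eqref{eq:kunoo} ``follow''; you instead re-derive the identities by the adjoint-exponential computation and, more importantly, you isolate the genuine content of the divided-power step, namely the membership $\big((1-T^{-1}u)(1-T^{-2}u^2)^{-3}(x_0^+)\big)^{(k)}\in\buz^+[[u]]$, and prove it by a Heisenberg-type straightening: since the commutators $[x_r^+,x_s^+]=\pm(-1)^s X_{r+s}^+$ are central in $\tu^+$, expanding the divided power of the sum produces exactly divided powers of $\tfrac12 X_{2r+1}^+$, which are generators of $\buz^+$ but not of $\tuz^+$. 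This is precisely the point the paper's terse proof glosses over, and your treatment is sound; the only caution is that you should invoke Lemma \ref{zdiv} (proved from Theorem \ref{trmA22}) rather than Theorem \ref{buztheorem} for the PBW structure of $\buz^+$, since that theorem is established downstream of this very proposition. Note also that your deduction for the $\hat h^+$ case gives $(1-T^{-1}u)^{2}(1-T^{-2}u^2)^{-6}$, which is what squaring \eqref{eq:commupieta} actually yields; the exponent $-2$ displayed in \eqref{eq:huno} is inconsistent with \eqref{eq:commupieta}, so your version is the internally coherent one and you have in effect flagged a typo in the statement. Finally, where the paper closes the integral-form assertion by citing Proposition \ref{buzaccazero}, your argument via Proposition \ref{muzubuzubo}, Lemma \ref{zdiv} and linear independence of the ordered PBW products is a slightly more detailed but equally valid route, and your use of $\sigma$-stability (Proposition \ref{stbuzmuz}) to upgrade the inclusions to equalities recovers the stronger form that the paper records separately in Proposition \ref{commuzeropiupiu}.
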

    \begin{proof}
        Equations \eqref{eq:commupieta} and \eqref{eq:commupietagrande} follow from
\cite{DP},Proposition 2.14 respectively with $m_1=1$, $m_2=3$ and $m_d=0$ if $d>2$ and $m_2=1$ and $m_d=0$ if $d>2$. Equations \eqref{eq:kuno} and \eqref{eq:kunoo} follow respectively by Equation \eqref{eq:commupieta} and \eqref{eq:commupietagrande}. 
From the $T^{\pm}$ stability of $\buz^+$ and the fact that $T\vert_{\tu^{0,+}}=\text{id}\vert_{\tu^{0,+}}$ we deduce that  for all $k\ge 0$ $(x_{r}^{+})^{(k)}\check h^+(u)\subseteq \check h^+(u) \buz^+[[u]]$ and $(\frac{1}{2}X_{2r+1}^{+})^{(k)}\check h^+(u)\subseteq \check h^+(u) \buz^+[[u]]$. Recalling that the $\check h_r$ generate $\muz^{0,+}$ and the $(x^+_{r})^{(k)}$ and the $(\frac{1}{2}X^+_{r})^{(k)}$ generate $\buz^{+}$ follows that $\buz^+\muz^{0,+}\subseteq \muz^{0,+} \buz^+$, \eqref{eq: buzpiumuzzeropiudue} that is Relation \eqref{eq: buzpiumuzzeropiu} follows applying $\Omega$. Equations \eqref{eq:huno} and \eqref{eq:hunoo} follows from Equations \eqref{eq:kuno} and \eqref{eq:kunoo} remembering that $\hat h^+(u)=\check h^+(u)^{2}$,
Equations \eqref{eq:buno} and \eqref{eq:bunoo} follows from Equations \eqref{eq:kuno} and \eqref{eq:kunoo} remembering that $\bar h^+(u)=\check h^+(u)^{2}\check h^+(-u)^{2}$
.
Relation \eqref{eq: altrauno} follows from Equations \eqref{eq:huno}, \eqref{eq:hunoo}, \eqref{eq:kuno}and  \eqref{eq:kunoo}.
Relation \eqref{eq: altradue}
and applying $\Omega$. The last equality follows from Proposition \ref{buzaccazero}.
    \end{proof} 

\begin{lemma} \label{nuoveadd}
      The following identities hold in $\tu[[u,v]]$: 
 
\begin{align}\label{eq:identuno}
&\exp(x_{0}^+u)\exp(\frac{1}{2}X_{1}^-v)=\\
&\exp\left(
\frac{2}{1-4T^2u^4v^2}
x_{0}^-uv\right)\exp\left(
\frac{-4T^2}{1-4T^2u^4v^2}
x_{1}^-u^3v^2\right)\cdot\\
&\cdot\exp\left({1-3\cdot 4Tu^4v^2\over (1-4T^{1}u^4v^2)^2}\frac{1}{2}X_{1}^-v\right)
\hat h^+(2u^2v)^{\frac{1}{2}}
\exp\left({1+4T^{-1}u^4v^2\over (1-4T^{-1}u^4v^2)^2}\frac{1}{2}X_{1}^+u^4v\right)\cdot\\&\cdot\exp\left({-2\over 1-4T^{-2}u^4v^2}x_1^+u^3v\right)\exp\left(
\frac{1}{1-4T^{-2}u^4v^2}
x_{1}^+u\right);\\
\\
\label{eq:identdue}
&\exp(\frac{1}{2}X_{2r+1}^+u)\exp(\frac{1}{2}X_{2s-1}^-v)=\\&
    \exp\left(
    \frac{1}{1+T^{s+r}uv}
    \frac{1}{2}X_{2s-1}^-v\right)\cdot
\lambda_{2(r+s)}(\hat h^+((u^rv^s)^{2})^{
\frac{1}{2}})
    \cdot\exp\left(
    \frac{1}{1+uvT^{-s-r}}
    \frac{1}{2}X_{2r+1}^+u\right), \text{ if } r+s\neq0;\\
&
    \exp\left(
    \frac{1}{1+T^{s+r}uv}
    \frac{1}{2}X_{2s-1}^-v\right)\cdot
\lambda_{r+s}(\bar h^+(-u^rv^s))
    \cdot\exp\left(
    \frac{1}{1+uvT^{-s-r}}
    \frac{1}{2}X_{2r+1}^+u\right), \text{ if } r+s\neq0;\\
    \label{eq:identtre}
&\exp(\frac{1}{2}X_{2r+1}^+u)\exp(\frac{1}{2}X_{2s-1}^-v)=\\&
    \exp\left(\frac{1}{2}X_{2s-1}^-v\right)\cdot
(1+4uv)^{(
\frac{h_0}{2}+
\frac{(2r+1)c}{4})}
    \cdot\exp\left(\frac{1}{2}X_{2r+1}^+u\right), \text{ if } r+s=0;\\
\end{align}
\end{lemma}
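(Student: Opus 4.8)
All three relations are identities in the formal power series ring $\tu[[u,v]]$, so the plan is to verify each directly from the commutation relations of Definition \ref{a22}, organising the computation around the Lie subalgebra generated by the two exponentiated vectors on the left-hand side. I would treat them in order of increasing difficulty: \eqref{eq:identtre}, then \eqref{eq:identdue}, and finally \eqref{eq:identuno}, the last being the real obstacle. In each case the right-hand side is a Gauss-type (negative)(Cartan)(positive) factorisation of the left-hand side, so the task reduces to identifying the Cartan factor together with the modified coefficients of the outer exponentials.

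For \eqref{eq:identtre} we are in the case $r+s=0$, i.e. $2s-1=-(2r+1)$. Setting $e=\frac{1}{2}X_{2r+1}^+$ and $f=\frac{1}{2}X_{2s-1}^-$, Definition \ref{a22} gives $[X_{2r+1}^+,X_{-(2r+1)}^-]=8h_0+4(2r+1)c$, hence $[e,f]=2h_0+(2r+1)c=:h'$, while $[h',e]=8e$ and $[h',f]=-8f$ (using $[h_0,X^\pm_t]=\pm 4X^\pm_t$ and the centrality of $c$). Thus $e,f,h'$ span a copy of $\mathfrak{sl}_2$, and the classical exponential (Gauss decomposition) identity in $U(\mathfrak{sl}_2)[[u,v]]$ yields the factorisation of \eqref{eq:identtre}, whose Cartan factor is $(1+4uv)^{h'/4}=(1+4uv)^{h_0/2+(2r+1)c/4}$.

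For \eqref{eq:identdue} we have $r+s\neq 0$, so now $[X_{2r+1}^+,X_{2s-1}^-]=8h_{2(r+s)}$ with the central term absent, and the long vectors commute among themselves; hence $e=\frac{1}{2}X_{2r+1}^+$ and $f=\frac{1}{2}X_{2s-1}^-$ generate a Heisenberg-type current. Braiding $\exp(ue)$ past $\exp(vf)$ produces only Cartan contributions, and assembling their generating function gives the middle factor $\lambda_{2(r+s)}(\hat h^+((u^rv^s)^2)^{1/2})$; this is the same generating-function bookkeeping used in Proposition \ref{muzubuzubo} (via \cite{DP}). The equivalence of the two displayed forms of the middle factor is then a routine consequence of Remark \ref{leacca} and Remark \ref{rembarceckbo}, which identify $\hat h^+=(\check h^+)^2$ and $\bar h^+(w^2)=\check h^+(w)\check h^+(-w)=\lambda_2(\check h^+(w^2))$, together with $\lambda_{r+s}\circ\lambda_2=\lambda_{2(r+s)}$.

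The main obstacle is \eqref{eq:identuno}, where the two left-hand vectors are of different types: $x_0^+$ is a short root vector and $\frac{1}{2}X_1^-$ a long one, and Definition \ref{a22} gives $[x_0^+,\frac{1}{2}X_1^-]=2x_1^-$. Consequently the iterated brackets do not close on a single $\mathfrak{sl}_2$ but cascade through the short vectors $x_r^\pm$ and the Cartan currents $h_r$, which is why the right-hand side is a product of five exponentials flanking the central factor $\hat h^+(2u^2v)^{1/2}$. I would prove it by the differential-equation method: writing $F(u)$ for the left-hand side and $G(u)$ for the right-hand side, one has $\frac{d}{du}F=x_0^+F$, and by conjugating $x_0^+$ successively through the factors of $G$, using the relations of Definition \ref{a22} together with the $\mathfrak{sl}_2$ and Heisenberg identities established above, one checks that $G$ satisfies the same first-order linear ODE in $u$ with $F(0)=G(0)=\exp(\frac{1}{2}X_1^-v)$; uniqueness of solutions in $\tu[[u,v]]$ then forces $F=G$. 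Equivalently, \eqref{eq:identuno} can be deduced by adapting the corresponding un-halved straightening relation for $\tuz$ from \cite{DP} under the rescaling $X^\pm\mapsto\frac{1}{2}X^\pm$, tracking the resulting powers of $2$ (the source of the factor $2$ in the argument $2u^2v$). The delicate points are controlling the infinite cascade of correction terms and pinning down the precise rational coefficients $\frac{1}{1-4T^{\pm 2}u^4v^2}$, where I expect essentially all the computational effort to sit. Finally it is worth recording that the central factor equals $\hat h^+(2u^2v)^{1/2}=\check h^+(2u^2v)=\hat h^{\{c\}}(u^2v)$ with $c_r=2^{r-1}$, so that by Theorem \ref{toglicerchietti} it already lies in $\buz^{0,+}[[u,v]]$; this is precisely why $\buz$ is closed under this straightening, and the reason Theorem \ref{toglicerchietti} was proved beforehand.
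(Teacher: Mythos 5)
Your fallback for \eqref{eq:identuno} --- rescale the un-halved identity of \cite{DP} by substituting $\frac{1}{2}v$ for $v$ --- is in fact the paper's \emph{entire} proof: all three identities are obtained there by citing \cite{DP}, Appendix A, VII a), b), c) and substituting $\frac{1}{2}u$ and/or $\frac{1}{2}v$, with no direct computation at all. So the part of your proposal that is guaranteed to work is exactly the citation-plus-rescaling step; and your closing observation that the central factor equals $\hat h^+(2u^2v)^{1/2}=\check h^+(2u^2v)=\hat h^{\{c\}}(u^2v)$ with $c_r=2^{r-1}$, hence lies in $\buz^{0,+}[[u,v]]$ by Theorem \ref{toglicerchietti}, is correct and is precisely how the paper exploits this lemma afterwards (Corollary \ref{ricondurre}).

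The direct, self-contained route you advertise has genuine gaps, however. First, for \eqref{eq:identdue} the pair $e=\frac{1}{2}X^+_{2r+1}$, $f=\frac{1}{2}X^-_{2s-1}$ does \emph{not} generate a Heisenberg-type algebra, and braiding does \emph{not} produce ``only Cartan contributions'': one has $[e,f]=2h_{2(r+s)}$, but then $[h_{2(r+s)},e]=4\cdot\frac{1}{2}X^+_{2r+1+2(r+s)}$ is a new long root vector at shifted degree, so the brackets cascade through the whole tower $X^{\pm}_{2r+1+2k(r+s)}$; this cascade is exactly what the operators $\frac{1}{1+T^{\pm(r+s)}uv}$ in the outer exponentials of the statement record, and the identity needed is the loop-$\mathfrak{sl}_2$ (Garland-type) straightening, not the Cartan--Cartan bookkeeping of Proposition \ref{muzubuzubo}. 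Second, for \eqref{eq:identuno} your ODE argument is a plan, not a proof: verifying that the right-hand side $G$ satisfies $\frac{d}{du}G=x_0^+G$ requires commuting $x_0^+$ through every $u$-dependent factor of $G$ and summing the resulting corrections, and you explicitly defer this --- but that computation \emph{is} the content of the identity, so nothing is actually established along this route. Finally, a smaller point on \eqref{eq:identtre}: your triple $e$, $f$, $h'=2h_0+(2r+1)c$ with $[h',e]=8e$, $[h',f]=-8f$ is right, but the classical Gauss decomposition then yields $\exp\left(\frac{v}{1+4uv}f\right)(1+4uv)^{h'/4}\exp\left(\frac{u}{1+4uv}e\right)$, whose outer arguments carry $\frac{1}{1+4uv}$ corrections absent from the statement as printed; so your claim that the classical identity ``yields the factorisation of \eqref{eq:identtre}'' does not literally match the displayed formula (most likely a typo in the paper, but it should be flagged rather than passed over).
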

\begin{proof}
    Equations \eqref{eq:identuno} 
    follows from \cite{DP},Appendix A, VII,c)   substituting $\frac{1}{2}v$ to $v$.
    Equation \eqref{eq:identdue}  follows from  \cite{DP},Appendix A, VII,b)
     substituting respectively $\frac{1}{2}u$ to $u$ and $\frac{1}{2}v$ to $v$. Equation \eqref{eq:identtre} follows by   \cite{DP},Appendix A,VII,a) substituting $\frac{1}{2}u$ and $\frac{1}{2}v$ respectively to $u$ and $v$ .
\end{proof}

\begin{corollary}\label{ricondurre}
\begin{align}
&\Z^{(div)}[\frac{1}{2}X_{2r+1}^+\mid r>0]\Z^{(div)}[\frac{1}{2}X_{2r+1}^-\mid r>0]\subseteq \buz^{-}\otimes\buz^0 \otimes
\buz^{+},\label{eq:EUA}\\
&\Z^{(div)}[x_{2r}^+\mid r>0]\Z^{(div)}[\frac{1}{2}X_{2r+1}^-\mid r>0]\subseteq\buz^{-}\otimes\buz^0\otimes\buz^{+},\label{eq:EUB}\\&
\bar h_{2r}, \hat h_r^{\{c\}}\in \uz
\label{eq:EUC},\\& \buz^+\buz^-\subseteq \buz^-\otimes\buz^0\otimes\buz^+.\label{eq:EUD}\\&
\muz^+\muz^-\subseteq \muz^-\otimes\muz^0\otimes\muz^+.\label{eq:EUE}
\end{align}
\end{corollary}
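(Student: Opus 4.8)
The plan is to derive all five statements from the exponential straightening identities of Lemma \ref{nuoveadd}, reading each claimed inclusion off as the coefficients of a reordered product of exponentials. Since $(y)^{(k)}$ is the coefficient of $u^k$ in $\exp(yu)$, I would take a product $\exp(\text{positive})\exp(\text{negative})$, apply the relevant identity of Lemma \ref{nuoveadd} to rewrite it as $(\text{negative})\cdot(\text{imaginary/Cartan})\cdot(\text{positive})$, and extract coefficients in $u,v$. The two outer factors are integral because the rational functions weighting each generator on the right-hand side (for instance $\frac{1}{1-4T^{-2}u^4v^2}$ or $\frac{1-3\cdot4Tu^4v^2}{(1-4Tu^4v^2)^2}$) are power series with integer coefficients in $u,v$, with $T$ acting as the mode-shift; hence the exponential of such a generator expands into integral combinations of divided powers of shifted real-root vectors, which lie in $\buz^\pm$ by Lemma \ref{zdiv}. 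So the content of \eqref{eq:EUA}, \eqref{eq:EUB}, \eqref{eq:EUD}, \eqref{eq:EUE} reduces to showing the middle factors land in $\buz^0$ (respectively $\muz^0$).

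For \eqref{eq:EUA} I would use \eqref{eq:identdue} when $r+s\neq0$ and \eqref{eq:identtre} when $r+s=0$. In the first case the middle factor is $\lambda_{2(r+s)}(\check h^+(\,\cdot\,))$ (equivalently $\lambda_{r+s}(\bar h^+(-\,\cdot\,))$), which, $2(r+s)$ being even, is exactly one of the even-index $\lambda$-factors in the basis $B'_\lambda$ of $\buz^{0,+}=\Z^{(mix)}[h_r\mid r>0]$ (Theorems \ref{lambdabaseth} and \ref{polbasis}). In the second case the middle factor is the binomial series $(1+4uv)^{\frac{h_0}{2}+\frac{(2r+1)c}{4}}$, whose coefficients are $4^k\binom{\frac{h_0}{2}+\frac{(2r+1)c}{4}}{k}$; writing $4a=2h_0+(2r+1)c\in\Z[h_0,c]$ one checks these lie in $\buz^\h=\Z^{(bin)}[h_0,c]$. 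For \eqref{eq:EUB} I would use \eqref{eq:identuno}; the decisive point is that its middle factor is $\hat h^+(2u^2v)^{1/2}=\check h^+(2u^2v)$, and the rescaling identity $\check h(2u)=\hat h^{\{c\}}(u)$ (immediate from Definition \ref{barradef} and \eqref{eq:circledef}) together with Theorem \ref{toglicerchietti} shows its coefficients lie in $\Z^{(mix)}[h_r\mid r>0]=\buz^{0,+}$. This same observation yields \eqref{eq:EUC}: $\bar h_{2r}$ is a generator of the imaginary form by Definition \ref{barradef}, while $\hat h_r^{\{c\}}$ is integral by Theorem \ref{toglicerchietti}, consistent with the identification $\Z(\tilde h_r,\hat h_r^{\{c\}},\bar h_{2r}\mid\pm r>0)=\Z^{(mix)}[h_r\mid\pm r>0]$ of Lemma \ref{toglitilde}.

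For \eqref{eq:EUD} I would assemble these pieces. By Lemma \ref{zdiv}, $\buz^+$ (and symmetrically $\buz^-$) is generated by the divided powers of $x_{2r}^\pm$, $x_{2r+1}^\pm$ and $\frac{1}{2}X_{2r+1}^\pm$, so it suffices to move each type of positive generator past each type of negative generator. The real--real pairs are handled by Theorem \ref{trmA22}, the pair $x_{2r}^+$ with $\frac{1}{2}X_{2s-1}^-$ by \eqref{eq:EUB}, the pair $\frac{1}{2}X_{2r+1}^+$ with $\frac{1}{2}X_{2s-1}^-$ by \eqref{eq:EUA}, and the remaining mixed pairs by applying $\sigma$, $\Omega$ and the mode-shift to these same identities. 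Each reordering produces factors in $\buz^-$, $\buz^0$ and $\buz^+$; to collect them into the stated tensor order I would push the positive real-root factors past the imaginary and Cartan factors using Propositions \ref{zeropiubarra} and \ref{buzaccazero}, which keep everything inside $\buz^{0,\pm}\buz^\pm$. Part \eqref{eq:EUE} follows by the identical argument, now landing the middle terms in the larger $\muz^{0,\pm}=\Z[\check h_r\mid\pm r>0]$ and using the $\muz$-versions of the commutation relations (Propositions \ref{muzubuzubo} and \ref{zeropiubarra}); since $\muz^\pm$ and $\buz^\pm$ agree on the real part, the real and real--$X$ straightenings are unchanged and only the imaginary middle terms differ, which is automatic.

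The main obstacle I expect is the integrality of the middle factors, not the bookkeeping. The delicate point is to recognize that the $\check h$-series appearing in Lemma \ref{nuoveadd} become integral elements of $\buz^0$ precisely because of the factor-of-$2$ rescaling carried by the $\frac{1}{2}X$ generators: it is the identity $\check h(2u)=\hat h^{\{c\}}(u)$ together with Theorem \ref{toglicerchietti} that converts a priori denominators into elements of $\Z^{(mix)}[h_r\mid r>0]$, and the parallel binomial-integrality of $(1+4uv)^{\frac{h_0}{2}+\frac{(2r+1)c}{4}}$ in $\Z^{(bin)}[h_0,c]$. Once these two integrality facts are in hand, the remainder is organizing the exponential identities and the already-established commutation relations.
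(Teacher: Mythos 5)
Your overall skeleton is the same as the paper's: the paper likewise derives \eqref{eq:EUA}--\eqref{eq:EUC} from Lemma \ref{nuoveadd} together with Proposition \ref{zeropiubarra}, gets \eqref{eq:EUD} from Theorem \ref{trmA22}, \eqref{eq:EUC} and Lemma \ref{toglitilde}, and reduces \eqref{eq:EUE} to \eqref{eq:EUD} via Remark \ref{remtildecheck} and Proposition \ref{zeropiubarra}; your treatment of \eqref{eq:EUA} and \eqref{eq:EUB} (even-index $\lambda$-factors landing in $\buz^{0,+}$, and the identification $\check h(2u)=\hat h^{\{c\}}(u)$ combined with Theorem \ref{toglicerchietti}) is correct and is exactly the intended use of those results.

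The genuine gap is in the Cartan case, which is precisely the case that \eqref{eq:EUD} cannot avoid. You claim that the coefficients $4^k\binom{a}{k}$ of the middle factor $(1+4uv)^{a}$ of \eqref{eq:identtre}, with $a=\frac{h_0}{2}+\frac{(2r+1)c}{4}$, lie in $\buz^\h=\Z^{(bin)}[h_0,c]$ because $4a=2h_0+(2r+1)c\in\Z[h_0,c]$. This is false for $k\ge 2$: one has $4^2\binom{a}{2}=\frac{4a(4a-4)}{2}$, which at $(h_0,c)=(0,1)$, $r=0$ equals $-\frac{3}{2}$, whereas every element of $\Z^{(bin)}[h_0,c]$ takes integer values on $\Z^2$; equivalently $4^2\binom{a}{2}\equiv\frac{c}{2}\pmod{\Z^{(bin)}[h_0,c]}$, and $\frac{c}{2}\notin\buz^{0}$ (Remark \ref{remtildecheck} together with the factorization of $\buz^0$ in Proposition \ref{muzubuzubo}). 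The parity obstruction disappears only after adjoining $\frac{c}{4}$: writing $(1+4uv)^{a}=\big((1+4uv)^{1/2}\big)^{h_0+2(2r+1)\frac{c}{4}}$ and using $(1+4uv)^{1/2}\in\Z[[uv]]$ shows these coefficients lie in $\muz^\h$, not in $\buz^\h$. This is exactly the $\buz$-versus-$\muz$ distinction that motivates the paper's Theorem \ref{muztheorem}, so it cannot be dismissed as bookkeeping: as written, your handling of the opposite-mode products $(\frac{1}{2}X^+_{2r+1})^{(k)}(\frac{1}{2}X^-_{-2r-1})^{(l)}$ proves \eqref{eq:EUE} but not \eqref{eq:EUD}. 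Note also that \eqref{eq:EUA} as literally stated involves only positive modes on both sides, so \eqref{eq:identtre} never enters there and that part of your argument is unaffected; but for \eqref{eq:EUD} the paper's citation list is different from your route precisely at this point, passing through Theorem \ref{trmA22} --- i.e.\ through the full divided powers $(X^\pm_{2r+1})^{(k)}$, whose straightening coefficients are $16^j\binom{a+m}{j}$ and \emph{are} integer-valued --- together with \eqref{eq:EUC} and Lemma \ref{toglitilde}, rather than through the half-$X$ Cartan identity.

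A second, smaller gap concerns \eqref{eq:EUC}, which asserts $\bar h_{2r},\hat h_r^{\{c\}}\in\uz$, i.e.\ membership in the algebra generated by the $(x^\pm_r)^{(k)}$ alone. Your justification places these elements only in the commutative ring $\Z^{(mix)}[h_r\mid r>0]$, which is a statement inside $\Q[h_r\mid r>0]$ and does not by itself produce them from products of divided powers of the $x^\pm_r$. For that one must exhibit them as middle factors of straightened products of elements of $\uz$, with the powers of $2$ tracked (this is what the paper's joint citation of Lemma \ref{nuoveadd} and Proposition \ref{zeropiubarra} is for); the observation $\check h(2u)=\hat h^{\{c\}}(u)$ is the right starting point, but the left-hand sides of the identities in Lemma \ref{nuoveadd} involve $\frac{1}{2}X^\pm$ and so a priori give membership in $\buz$, not in $\uz$.
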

\begin{proof} Relations \eqref{eq:EUA},\eqref{eq:EUB},\eqref{eq:EUC} follow from Propositions \ref{zeropiubarra} and \ref{nuoveadd}. Relation \eqref{eq:EUD} follows from Theorem \ref{trmA22}, Relation \eqref{eq:EUC} and Lemma \ref{toglitilde}. Relation \eqref{eq:EUE} follows from Relation \eqref{eq:EUD}, Remark \ref{remtildecheck} and Proposition \ref{zeropiubarra}.
\end{proof}
\begin{remark}
    The last two assertions of Corollary \ref{ricondurre} are the Claim of Theorems \ref{buztheorem} and  \ref{muztheorem}.
\end{remark}


\section{ Integral form of $A_{2n}^{(2)}$}\label{chapteraqd}

\subsection{Integral form of $A_4^{(2)}$} \label{glue}
Let us fix in this subsection $n=2$.
In this part we want to study the algebra $\uz^+(A_4^{(2)})$.
The first part is devoted to the study of the positive real roots part, more specifically in the initial part (Lemmas \ref{techuno} and \ref{lemmatauuno} ) 
we will study certain commutation formulas of the Lie algebra, in Lemma \ref{commuplus} we will use the results to study the commutation rules within the enveloping algebra of divided powers, the results obtained will be collected within  Theorem \ref{buzpiudue}.

The finite Lie algebra associate to $A_{4}^{(2)}$ is the Lie algebra of type $B_2$,
let $W_0$ its Weyl group and let $w_0$ its longest element we fix  the following reduced expression for $w_0$: $\sigma_2\sigma_1\sigma_2\sigma_1$.
In particular we the root vectors are the following:
\begin{align}
\{x^+_{2,r},\tau_2(x^+_{1,r})=x^+_{\alpha_1+\alpha_2,r},
\tau_1(x^+_{2,r})=x^+_{2\alpha_1+\alpha_2,r},x^+_{1,r}\}
\end{align}

\begin{lemma}\label{techuno}
    \begin{align}
        &[x^-_{1,0},[x^+_{1,0},x^+_{2,r}]]=2x^+_{2,r}\label{eq:formulaA};\\
        &[x^-_{1,0},[x^+_{1,0},[x^+_{1,0},x^+_{2,r}]]]=2[x_{1,0}^+,x^+_{2,r}]\label{eq:formulaB};\\
        &[x^-_{1,0},[x^-_{1,0},[x^+_{1,0},[x^+_{1,0},x^+_{2,r}]]]]=4x^+_{2,r}\label{eq:formulaC};\\
        &[x^-_{2,0},[x^+_{2,0},x^+_{1,r}]]=x^+_{1,r};\label{eq:formulaD}\\
        &[h_{2,0},X^+_{1,r}]=-2X^+_{1,r}\label{eq:formulaE}\\
        &[x_{2,0}^-,[x_{2,0}^+,X^+_{1,r}]]
        =2X^+_{1,r}\label{eq:formulaF}.\\
        &[x_{2,0}^-,[x_{2,0}^+,[x_{2,0}^+,X^+_{1,r}]]]        =2[x_{2,0}^+,X^+_{1,r}]\label{eq:formulaG}\\
        &[x_{2,0}^-,[x_{2,0}^-,[x_{2,0}^+,[x_{2,0}^+,X^+_{1,r}]]]]=4X^+_{1,r}.     \label{eq:formulaH}
    \end{align}
\end{lemma}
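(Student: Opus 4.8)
The plan is to read all eight identities as statements about the adjoint action of two copies of $\mathfrak{sl}_2$, attached to the nodes $1$ and $2$. For $i\in\{1,2\}$ I would set $e_i=x^+_{i,0}$, $f_i=x^-_{i,0}$, $h_i=h_{i,0}$; by the relations of Definition \ref{aquattrodef} (recall $a_{i,i;0}=2$) these form an $\mathfrak{sl}_2$-triple, $[e_i,f_i]=h_i$, $[h_i,e_i]=2e_i$, $[h_i,f_i]=-2f_i$. The whole lemma then amounts to the observation that each of $x^+_{2,r}$, $x^+_{1,r}$ and $X^+_{1,2r+1}$ is a lowest weight vector for the relevant $\mathfrak{sl}_2$, so that every bracket in the statement can be evaluated by the elementary rule
\begin{align}
[f_i,[e_i,w]]=-[h_i,w]+[e_i,[f_i,w]]
\end{align}
together with the Jacobi identity, applied once, twice or three times.

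For the node-$1$ identities \eqref{eq:formulaA}--\eqref{eq:formulaC} I would first record that $v:=x^+_{2,r}$ satisfies $[f_1,v]=[x^-_{1,0},x^+_{2,r}]=0$ (the relation $[x^+_{i,r},x^-_{j,s}]=\delta_{i,j}(\cdots)$ kills mixed indices) and $[h_1,v]=a_{1,2}v=-2v$. Hence $v$ is lowest of weight $-2$ and generates the three-dimensional module $\langle v,e_1v,e_1^2v\rangle$ (the roots $\alpha_2+r\delta$, $(\alpha_1+\alpha_2)+r\delta$, $(2\alpha_1+\alpha_2)+r\delta$ all lie in $\Phi^{re,+}$, see Notation \ref{rootsystemadueennedue}, while $e_1^3v=0$). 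Feeding $[f_1,v]=0$ into the rule above gives $[f_1,[e_1,v]]=2v$, i.e. \eqref{eq:formulaA}; expanding once more, and using that $e_1v$ has weight $0$ so that $[h_1,e_1v]=0$, yields \eqref{eq:formulaB}, and one final application of $f_1$ yields \eqref{eq:formulaC}. Identity \eqref{eq:formulaD} is the same one-step computation for node $2$, where now $[h_2,x^+_{1,r}]=a_{2,1}x^+_{1,r}=-x^+_{1,r}$ produces the coefficient $1$.

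The three $X$-identities I would organise around \eqref{eq:formulaE}, which follows directly from $X^+_{1,2r+1}=[x^+_{1,2r+1},x^+_{1,0}]$ (Definition \ref{xgranditau}) and $[h_{2,0},x^+_{1,s}]=a_{2,1}x^+_{1,s}=-x^+_{1,s}$ by a single Jacobi expansion, giving weight $-2$. Then, with $v:=X^+_{1,2r+1}$, the key point is that $[f_2,v]=[x^-_{2,0},[x^+_{1,2r+1},x^+_{1,0}]]=0$, which follows from Jacobi and $[x^-_{2,0},x^+_{1,s}]=0$. So $v$ is once again lowest of weight $-2$, and \eqref{eq:formulaF}--\eqref{eq:formulaH} come out of literally the same three computations as \eqref{eq:formulaA}--\eqref{eq:formulaC} with $(e_1,f_1)$ replaced by $(e_2,f_2)$. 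The only genuinely non-formal inputs, and thus where I expect to have to be careful, are the two lowest weight (annihilation) conditions $[x^-_{1,0},x^+_{2,r}]=0$ and $[x^-_{2,0},X^+_{1,2r+1}]=0$, together with the bookkeeping of the Cartan integers $a_{1,2}=-2$, $a_{2,1}=-1$ that fix the coefficients $2,2,4,1$; one should also verify the root-string consistency that the weight-$2$ states are nonzero (e.g. $\alpha_1+\alpha_2$ is a short root, so $2(\alpha_1+\alpha_2)+(2r+1)\delta$ is a genuine long real root and $e_2^2v\neq 0$), which is precisely what guarantees that the module is three-dimensional rather than collapsing.
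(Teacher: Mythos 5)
Your proposal is correct and is essentially the paper's own proof in different clothing: the paper likewise establishes each identity by direct Jacobi-identity expansions from the defining relations (using $[x^-_{i,0},x^+_{j,r}]=0$ for $i\neq j$, $[h_{i,0},x^+_{j,r}]=a_{i,j}x^+_{j,r}$, and $[x^+_{i,0},x^-_{i,0}]=h_{i,0}$), deriving \eqref{eq:formulaB} from \eqref{eq:formulaA} and \eqref{eq:formulaC} from \eqref{eq:formulaB} in the same cascade you describe, and proving \eqref{eq:formulaE}--\eqref{eq:formulaH} by the same computations with $X^+_{1,r}=[x^+_{1,r},x^+_{1,0}]$. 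Your $\mathfrak{sl}_2$-lowest-weight packaging via $[f_i,[e_i,w]]=-[h_i,w]+[e_i,[f_i,w]]$ is just a systematic rewriting of those same steps (the root-string nonvanishing remark is not needed for the identities), so the two arguments coincide in substance.
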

\begin{proof}
Proof of equations  \eqref{eq:formulaA},\eqref{eq:formulaB},\eqref{eq:formulaC} and \eqref{eq:formulaD}.
    \begin{align}
        &[x^-_{1,0},[x^+_{1,0},x^+_{2,r}]]=
        -[x^+_{2,r},[x^-_{1,0},x^+_{1,0}]]=
        [x^+_{2,r},h_{1,0}]=-[h_{1,0},x^+_{2,r}]=2x^+_{2,r}.\\\\
    &[x^-_{1,0},[x^+_{1,0},[x^+_{1,0},x^+_{2,r}]]]=
    -([[x^+_{1,0},x^+_{2,r}],[x^-_{1,0},x^+_{1,0}]]+[x^+_{1,0},[[x^+_{1,0},x^+_{2,r}],x^-_{1,0}]])=
    \\&[[x^+_{1,0},x^+_{2,r}],h_{1,0}]+[x^+_{1,0},[x^-_{1,0},[x^+_{1,0},x^+_{2,r}]]]=-[h_{1,0},[x^+_{1,0},x^+_{2,r}]]+2[x_{1,0}^+,x^+_{2,r}]=\\&[x^+_{2,r},[h_{1,0},x^+_{1,0}]]+[x^+_{1,0},[x^+_{2,r},h_{1,0}]]+2[x_{1,0}^+,x^+_{2,r}]=
    \\&2[x^+_{2,r},x^+_{1,0}]+2[x^+_{1,0},x^+_{2,r}]+2[x_{1,0}^+,x^+_{2,r}]=2[x_{1,0}^+,x^+_{2,r}].\\\\
    &[x^-_{1,0},[x^-_{1,0},[x^+_{1,0},[x^+_{1,0},x^+_{2,r}]]]]=2[x^-_{1,0},[x_{1,0}^+,x^+_{2,r}]]=4x^+_{2,r};\\
\\
    &[x^-_{2,0},[x^+_{2,0},x^+_{1,r}]]=-[x^+_{1,r},[x^-_{2,0},x^+_{2,0}]=
        [x^+_{1,r},h_{2,0}]=x^+_{1,r}.
\end{align}
Proof of equations \eqref{eq:formulaE}, \eqref{eq:formulaF}, \eqref{eq:formulaG} and \eqref{eq:formulaH} 
\begin{align}
    &[h_{2,0},X^+_{1,r}]=[h_{2,0},[x^+_{1,r},x^+_{1,0}]]=\\
&-[x^+_{1,0},[h_{2,0},x^+_{1,r}]]-[x^+_{1,r},[x^+_{1,0},h_{2,0}]]=-[x^+_{1,0},x^+_{1,r}]+[x^+_{1,r},x^+_{1,0}]=-2X^+_{1,r};
\\\\
    &[x_{2,0}^-,[x_{2,0}^+,X^+_{1,r}]]=-[X^+_{1,r},[x_{2,0}^-,x_{2,0}^+]]=[X^+_{1,r},h_{2,0}]=2X^+_{1,r};
\\\\
    &[x_{2,0}^-,[x_{2,0}^+,[x_{2,0}^+,X^+_{1,r}]]]=-[[x_{2,0}^+,X^+_{1,r}],[x_{2,0}^-,x_{2,0}^+]]-[x_{2,0}^+,[[x_{2,0}^+,X^+_{1,r}],x_{2,0}^-]]=\\
    &[[x_{2,0}^+,X^+_{1,r}],h_{2,0}]]+[x_{2,0}^+,[x_{2,0}^-,[x_{2,0}^+,X^+_{1,r}]]]=-[h_{2,0},[x_{2,0}^+,X^+_{1,r}]]
    +2[x_{2,0}^+,X^+_{1,r}]=\\
    &[X^+_{1,r},[h_{2,0},x_{2,0}^+]]+[x_{2,0}^+,[X^+_{1,r},h_{2,0}]]
    +2[x_{2,0}^+,X^+_{1,r}]=\\
    &2[X^+_{1,r},x_{2,0}^+]
    +2[x_{2,0}^+,X^+_{1,r}]
    +2[x_{2,0}^+,X^+_{1,r}]=2[x_{2,0}^+,X^+_{1,r}];\\
\\
     &[x_{2,0}^-,[x_{2,0}^-,[x_{2,0}^+,[x_{2,0}^+,X^+_{1,r}]]]]=2[x_{2,0}^-,[x_{2,0}^+,X^+_{1,r}]]=4X^+_{1,r}.
\end{align}
\end{proof}
\begin{lemma}\label{lemmatauuno}
The following identities hold in $\tu$:
\begin{align}     &\label{eq:tauuxp}\tau_1(x^+_{2,r})=\frac{1}{2}[x_{1,0}^+,[x_{1,0}^+,x_{2,r}^+]]=x_{2\alpha_1+\alpha_2,r}^+;\\
&\tau_2(x^+_{1,r})=[x_{2,0}^+,x_{1,r}^+]=x_{\alpha_1+\alpha_2,r}^+;\label{eq:tauduexpp}\\
        &\tau_2(X^+_{1,r})=\frac{1}{4}[x^+_{2,0},[x^+_{2,0},X^+_{1,r}]]=X^+_{2,r}.\label{eq:tauduexgp}
    \end{align}
\end{lemma}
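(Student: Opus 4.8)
The plan is to compute each $\tau_i(\cdot)$ directly from the definition $\tau_i=\exp(\mathrm{ad}\,x^+_{i,0})\exp(-\mathrm{ad}\,x^-_{i,0})\exp(\mathrm{ad}\,x^+_{i,0})$ (Definition \ref{tauiloop}, using $e_i=x^+_{i,0}$ and $f_i=x^-_{i,0}$ from Remark \ref{isomorphism}). The key observation is that each of $x^+_{2,r}$, $x^+_{1,r}$ and $X^+_{1,r}$ is a lowest--weight vector inside a \emph{finite-dimensional} $\mathfrak{sl}_2$-submodule for the triple $(x^+_{i,0},h_{i,0},x^-_{i,0})$ attached to the relevant node $i$. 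Consequently all three exponentials truncate to finite sums, and the entire computation is controlled by the bracket data already recorded in Lemma \ref{techuno}.

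For \eqref{eq:tauuxp} I would first note that $x^+_{2,r}$ has $h_{1,0}$-weight $a_{1,2}=-2$ and is annihilated by $x^-_{1,0}$, while $(\mathrm{ad}\,x^+_{1,0})^{3}x^+_{2,r}=0$ by the Serre relation; hence the set $\{x^+_{2,r},\,[x^+_{1,0},x^+_{2,r}],\,[x^+_{1,0},[x^+_{1,0},x^+_{2,r}]]\}$ spans a $3$-dimensional irreducible module whose lowering action is exactly \eqref{eq:formulaA}--\eqref{eq:formulaC}. Then I apply the exponentials in turn: $\exp(\mathrm{ad}\,x^+_{1,0})$ sends $x^+_{2,r}$ to $x^+_{2,r}+[x^+_{1,0},x^+_{2,r}]+\tfrac12[x^+_{1,0},[x^+_{1,0},x^+_{2,r}]]$; applying $\exp(-\mathrm{ad}\,x^-_{1,0})$ and using \eqref{eq:formulaA}--\eqref{eq:formulaC}, the weight $-2$ and weight $0$ components cancel, leaving $\tfrac12[x^+_{1,0},[x^+_{1,0},x^+_{2,r}]]$; and the final $\exp(\mathrm{ad}\,x^+_{1,0})$ fixes this top vector. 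This yields the middle equality, and the identification with $x^+_{2\alpha_1+\alpha_2,r}$ is Definition \ref{rootsvectors}. Identity \eqref{eq:tauduexgp} is formally identical with node $1$ replaced by node $2$ and $x^+_{2,r}$ replaced by $X^+_{1,r}$: the weight is $-2$ by \eqref{eq:formulaE} and the lowering action is \eqref{eq:formulaF}--\eqref{eq:formulaH}, so the same three exponentials produce the multiple of $[x^+_{2,0},[x^+_{2,0},X^+_{1,r}]]$ asserted in \eqref{eq:tauduexgp}, which is $X^+_{2,r}$ by Definition \ref{xgranditau}.

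Identity \eqref{eq:tauduexpp} is the easier two-dimensional case: $x^+_{1,r}$ has $h_{2,0}$-weight $a_{2,1}=-1$, is annihilated by $x^-_{2,0}$, and $(\mathrm{ad}\,x^+_{2,0})^{2}x^+_{1,r}=0$ (Serre), so it spans a $2$-dimensional module with lowering action \eqref{eq:formulaD}. Here $\exp(\mathrm{ad}\,x^+_{2,0})$ gives $x^+_{1,r}+[x^+_{2,0},x^+_{1,r}]$, $\exp(-\mathrm{ad}\,x^-_{2,0})$ cancels the weight $-1$ part, and the last exponential fixes the result, giving $\tau_2(x^+_{1,r})=[x^+_{2,0},x^+_{1,r}]=x^+_{\alpha_1+\alpha_2,r}$.

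The one genuinely nonroutine point is setting up the $X^+_{1,r}$-string in \eqref{eq:tauduexgp}, since there is no Serre relation to quote directly. I would verify $[x^-_{2,0},X^+_{1,r}]=0$ by a short Jacobi expansion of $X^+_{1,r}=[x^+_{1,r},x^+_{1,0}]$ together with $[x^-_{2,0},x^+_{1,s}]=0$, and verify $(\mathrm{ad}\,x^+_{2,0})^{3}X^+_{1,r}=0$ by observing that $2\alpha_1+3\alpha_2+(2r+1)\delta$ is not a root of $A_4^{(2)}$, so that $X^+_{1,r}$ indeed generates a $3$-dimensional module. Granting this, the only remaining care is the bookkeeping of the scalar emerging from the middle exponential, which is pinned down by the coefficients in \eqref{eq:formulaF}--\eqref{eq:formulaH}; the ``$=x^+_{\beta_0,r}$'' and ``$=X^+_{2,r}$'' assertions are then definitional.
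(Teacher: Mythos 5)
Your handling of \eqref{eq:tauuxp} and \eqref{eq:tauduexpp} is correct and is essentially the paper's own proof: expand the three exponentials defining $\tau_i$, truncate using the Serre relations and $[x^-_{i,0},x^+_{j,r}]=0$, and simplify with Lemma \ref{techuno}; the $\mathfrak{sl}_2$-string language is just a repackaging of that computation, and the scalars you report there ($\tfrac12$ for \eqref{eq:tauuxp}, $1$ for \eqref{eq:tauduexpp}) are the right ones. Your two auxiliary verifications for the third identity, namely $[x^-_{2,0},X^+_{1,r}]=0$ (Jacobi applied to $X^+_{1,r}$ written as a bracket of node-$1$ vectors) and $(\text{ad}\,x^+_{2,0})^3X^+_{1,r}=0$ (your root-system argument, or Leibniz together with the Serre relation $(\text{ad}\,x^+_{2,0})^2x^+_{1,s}=0$), are details the paper uses only implicitly, so supplying them is an improvement.

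The gap is exactly at the step you declare to be routine bookkeeping and do not perform: the scalar forced by \eqref{eq:formulaE}--\eqref{eq:formulaH} is $\tfrac12$, not the $\tfrac14$ asserted in \eqref{eq:tauduexgp}, so your claim that the computation ``produces the multiple asserted in \eqref{eq:tauduexgp}'' is false as written. Indeed, by your own observation the node-$2$ string through $X^+_{1,r}$ has the same structure constants as the node-$1$ string through $x^+_{2,r}$ (lowest weight $-2$, lowering coefficients $2$ and $2$; compare \eqref{eq:formulaF}--\eqref{eq:formulaG} with \eqref{eq:formulaA}--\eqref{eq:formulaB}), so the three exponentials must return the same coefficient $\tfrac12$ as in \eqref{eq:tauuxp}. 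Explicitly, with $v_0=X^+_{1,r}$, $v_1=[x^+_{2,0},v_0]$, $v_2=[x^+_{2,0},v_1]$, one has $\exp(\text{ad}\,x^+_{2,0})v_0=v_0+v_1+\tfrac12v_2$ and
\begin{align}
\exp(-\text{ad}\,x^-_{2,0})\bigl(v_0+v_1+\tfrac12v_2\bigr)
=v_0+(v_1-2v_0)+\bigl(\tfrac12v_2-v_1+v_0\bigr)=\tfrac12v_2,
\end{align}
after which the final $\exp(\text{ad}\,x^+_{2,0})$ fixes $v_2$; hence $\tau_2(X^+_{1,r})=\tfrac12[x^+_{2,0},[x^+_{2,0},X^+_{1,r}]]$. (The paper's own proof contains the same slip: its penultimate line is exactly the left-hand side above, whose $v_0$- and $v_1$-coefficients cancel as $1-2+1$ and $1-1$, yet it is summed to $\tfrac14v_2$.) So a completed version of your argument does not confirm \eqref{eq:tauduexgp} as printed; it shows that the middle equality must read $\tfrac12$ (equivalently, that the normalization of $X^+_{2,r}$ or of \eqref{eq:formulaF}--\eqref{eq:formulaH} has to be adjusted). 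Deferring the scalar check is precisely what let this inconsistency pass: the one ``nonroutine'' point you identified is the point where the statement fails.
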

\begin{proof} We use relations of Lemma \ref{techuno}.\\
 Proof of Equation \eqref{eq:tauuxp}:  
    \begin{align}     \tau_1(x^+_{2,r})&=\exp(\text{ad}x^+_{1,0})\exp(-\text{ad}x^-_{1,0})\exp(\text{ad}x^+_{1,0})\big(x_{2,r}^+\big)\\   &=\exp(\text{ad}x^+_{1,0})\exp(-\text{ad}x^-_{1,0})\big(x_{2,r}^++[x_{1,0}^+,x_{2,r}^+]+\frac{1}{2}[x_{1,0}^+,[x_{1,0}^+,x_{2,r}^+]]\big)\\   &=\exp(\text{ad}x^+_{1,0})\big(x_{2,r}^++[x_{1,0}^+,x_{2,r}^+]-[x_{1,0}^-,[x_{1,0}^+,x_{2,r}^+]]\\&\quad+\frac{1}{2}[x_{1,0}^+,[x_{1,0}^+,x_{2,r}^+]]-\frac{1}{2}[x_{1,0}^-,[x_{1,0}^+,[x_{1,0}^+,x_{2,r}^+]]]+\frac{1}{4}[x_{1,0}^-,[x_{1,0}^-,[x_{1,0}^+,[x_{1,0}^+,x_{2,r}^+]]]\big)\\   &=\exp(\text{ad}x^+_{1,0})\big(x_{2,r}^++[x_{1,0}^+,x_{2,r}^+]-2x_{2,r}^++\frac{1}{2}[x_{1,0}^+,[x_{1,0}^+,x_{2,r}^+]]-[x_{1,0}^+,x_{2,r}^+]+x_{2,r}^+\big)\\  &=\exp(\text{ad}x^+_{1,0})(\frac{1}{2}[x_{1,0}^+,[x_{1,0}^+,x_{2,r}^+]])=\frac{1}{2}[x_{1,0}^+,[x_{1,0}^+,x_{2,r}^+]].
    \end{align}

Proof of Equation \eqref{eq:tauduexpp}:
    \begin{align}
\tau_2(x^+_{1,r})&=\exp(\text{ad}x^+_{2,0})\exp(-\text{ad}x^-_{2,0})\exp(\text{ad}x^+_{2,0})\big(x_{1,r}^+\big)\\   
&=\exp(\text{ad}x^+_{2,0})\exp(-\text{ad}x^-_{2,0})\big(x_{1,r}^++[x_{2,0}^+,x_{1,r}^+]\big)\\    
&=\exp(\text{ad}x^+_{2,0})\big(x_{1,r}^++[x_{2,0}^+,x_{1,r}^+]-[x_{2,0}^-,[x_{2,0}^+,x_{1,r}^+]]\big)\\
&=\exp(\text{ad}x^+_{2,0})\big(x_{1,r}^++[x_{2,0}^+,x_{1,r}^+]-x^+_{1,r}\big)\\
&=\exp(\text{ad}x^+_{2,0})([x_{2,0}^+,x_{1,r}^+])=[x_{2,0}^+,x_{1,r}^+].
    \end{align}

Proof of Equation \eqref{eq:tauduexgp}:
\begin{align}
\tau_2(X^+_{1,r})&=\exp(\text{ad}x^+_{2,0})\exp(-\text{ad}x^-_{2,0})\exp(\text{ad}x^+_{2,0})\big(X^+_{1,r}\big)\\
&=\exp(\text{ad}x^+_{2,0})\exp(-\text{ad}x^-_{2,0}))\big(X^+_{1,r}+[x^+_{2,0},X^+_{1,r}]+\frac{1}{2}[x^+_{2,0},[x^+_{2,0},X^+_{1,r}]]\big)\\
&=\exp(\text{ad}x^+_{2,0})(X^+_{1,r}+[x^+_{2,0},X^+_{1,r}]-[x^-_{2,0},[x^+_{2,0},X^+_{1,r}]]\\
&+\frac{1}{2}[x^+_{2,0},[x^+_{2,0},X^+_{1,r}]]-\frac{1}{2}[x^-_{2,0},[x^+_{2,0},[x^+_{2,0},X^+_{1,r}]]]+\frac{1}{4}[x^-_{2,0},[x^-_{2,0},[x^+_{2,0},[x^+_{2,0},X^+_{1,r}]]]])\\
&=\exp(\text{ad}x^+_{2,0})(\frac{1}{4}[x^+_{2,0},[x^+_{2,0},X^+_{1,r}]])=\frac{1}{4}[x^+_{2,0},[x^+_{2,0},X^+_{1,r}]].
\end{align}
\end{proof}
We will now use the $\tau_i$s to prove straightening formulas of the positive real root vectors.
\begin{lemma}\label{commuplus}
The following identities hold in $\tu^+[[u,v]]$
\begin{align}
&i)\exp(x^+_{1,r}u)\exp(x^+_{2,s}v)=\exp(x^+_{2,s}v)\exp(x^+_{1,r}u)\exp(x^+_{\alpha_1+\alpha_2,r+s}uv)\exp((-1)^{r+1}x^+_{2\alpha_1+\alpha_2,2r+s}u^2v),\label{eq:commu1}\\
&ii)\exp(x^+_{1,r}u)\exp(x^+_{\alpha_1+\alpha_2,s}v)=\exp(x^+_{\alpha_1+\alpha_2,r}v)\exp(2(-1)^{r}x^+_{2\alpha_1+\alpha_2,r+s}uv)\exp(x^+_{1,r}u),\label{eq:commu2}\\
&iii)\exp(\frac{1}{2}X^+_{1,r}u)\exp(x^+_{2,r}v)=\exp(x^+_{2,s}v)\exp(\frac{1}{2}X^+_{1,r}u)\exp(2x^+_{2\alpha_1+\alpha_2,r+s}uv),\label{eq:commu3}\\
&iv)\exp(x^+_{2,r}u)\exp(x^+_{2\alpha_1+\alpha_2,s}v)=\exp(x^+_{2\alpha_1+\alpha_2,r}v)
    \exp(-\frac{1}{2}X^+_{2,r+s}uv)\exp(x^+_{2,s}u),\label{eq:commu4}\text{ if } r+s \text{ is odd}. \\
\end{align}  
\end{lemma}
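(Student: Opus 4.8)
The plan is to derive each of i)--iv) by a Baker--Campbell--Hausdorff computation inside the finite-dimensional nilpotent subalgebra of $\tu^+$ generated by the two root vectors on the left-hand side, everything taking place in $\tu^+[[u,v]]$. The only two rearrangement rules I need are the conjugation identity $\exp(A)\exp(B)=\exp(B)\exp(e^{-\text{ad}_B}A)$ and, when $[X,Y]$ commutes with both $X$ and $Y$, the central-commutator case of BCH, $\exp(X+Y)=\exp(X)\exp(Y)\exp(-\tfrac12[X,Y])$. What makes both sides finite products is that the finite type is $B_2$: the positive roots are $\alpha_1,\alpha_2,\alpha_1+\alpha_2,2\alpha_1+\alpha_2$ together with the doubled short weights $2\alpha_1$ and $2(\alpha_1+\alpha_2)$ carried respectively by the $X^+_{1,\bullet}$ and the $X^+_{2,\bullet}$. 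Every bracket whose weight falls outside this list vanishes, so by the Serre relations $(\text{ad}\,x^+_2)^2(x^+_1)=0$ and $(\text{ad}\,x^+_1)^3(x^+_2)=0$ the series $e^{-\text{ad}_B}A$ terminates after the second iterated bracket in i) and after the first in ii)--iv).

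First I would record the Lie-algebra structure constants, with their loop degrees, that feed the computation. Extending \eqref{eq:tauduexpp} to arbitrary degree (node $2$ being an ordinary node, so the bracket depends only on the total degree) gives $[x^+_{1,r},x^+_{2,s}]=x^+_{\alpha_1+\alpha_2,r+s}$, and then $[x^+_{1,r},x^+_{\alpha_1+\alpha_2,s}]=2(-1)^r x^+_{2\alpha_1+\alpha_2,r+s}$. The one genuinely twisted ingredient is the double bracket at node $1$: iterating relation \eqref{eq:rrscambiasegno} $r$ times down to second degree $0$ yields $[x^+_{1,r},[x^+_{1,r},x^+_{2,s}]]=(-1)^r[x^+_{1,0},[x^+_{1,0},x^+_{2,s+2r}]]=2(-1)^r x^+_{2\alpha_1+\alpha_2,2r+s}$ by \eqref{eq:tauuxp}, which is precisely the origin of both the sign $(-1)^{r+1}$ and the shifted index $2r+s$ in i). For iii) and iv) I additionally need the brackets that create and move the $X^+$-vectors, coming from Lemma \ref{techuno} (\eqref{eq:formulaE}--\eqref{eq:formulaH}) and Definition \ref{xgranditau}: $[X^+_{1,r},x^+_{2,s}]$ is a multiple of $x^+_{2\alpha_1+\alpha_2,r+s}$ and $[x^+_{2,r},x^+_{2\alpha_1+\alpha_2,s}]$ is a multiple of $X^+_{2,r+s}$, the latter nonzero exactly when $r+s$ is odd, which is the hypothesis in iv).

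With the constants in hand the assembly is routine. For i), with $A=x^+_{1,r}u$ and $B=x^+_{2,s}v$, the relation $(\text{ad}\,x^+_2)^2(x^+_1)=0$ gives $e^{-\text{ad}_B}A=A+[A,B]$, and since $[A,[A,B]]$ is the top vector $x^+_{2\alpha_1+\alpha_2,2r+s}$ and hence central in the subalgebra, the central-commutator rule turns $\exp(A+[A,B])$ into $\exp(A)\exp([A,B])\exp(-\tfrac12[A,[A,B]])$; substituting the two structure constants above reproduces exactly the right-hand side of i). One must also notice here that the seemingly dangerous bracket $[x^+_{2,s},x^+_{2\alpha_1+\alpha_2,2r+s}]$ would land in $X^+_{2,2r+2s}$, which vanishes because $2r+2s$ is even, and this is what keeps the nilpotency class equal to three. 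Identities ii), iii), iv) are the class-two cases: the single surviving bracket $[A,B]$ is a top-weight vector commuting with everything, so $\exp(A)\exp(B)=\exp(B)\exp(A)\exp([A,B])=\exp(B)\exp([A,B])\exp(A)$, and inserting the corresponding structure constant yields the stated formula in each case.

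The genuine difficulty is not the exponential calculus but the coherent bookkeeping of signs, loop degrees and the normalising factors $\tfrac12$ and $2$ across all four identities. One must verify that the twisted iteration of \eqref{eq:rrscambiasegno} really produces $2(-1)^r x^+_{2\alpha_1+\alpha_2,2r+s}$ with the sign claimed, that every off-root bracket (of weight $\alpha_1+2\alpha_2$, $3\alpha_1+\alpha_2$, $3\alpha_1+2\alpha_2$, or an even-index $X^+_2$) is genuinely zero so that the series truncate where asserted, and that the coefficients combine so that the $X^+$-vectors enter only through their integral multiples $\tfrac12 X^+$. This last point is exactly what makes these straightening relations usable for the integral form collected in Theorem \ref{buzpiudue}.
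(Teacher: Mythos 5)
Your overall strategy --- rewrite $\exp(A)\exp(B)$ as $\exp(B)\exp(e^{-\mathrm{ad}_B}A)$, truncate the series via the Serre relations, then split the resulting exponential by the central-commutator case of BCH --- is exactly the paper's proof: the paper delegates these two rearrangement steps to \cite{DP}, Lemma 2.3 (parts iv, vi, viii) and then substitutes the same structure constants coming from \eqref{eq:solorpius}, \eqref{eq:rrscambiasegno} and Lemma \ref{lemmatauuno}. For i), ii) and iv) your computation is sound and matches the paper's, up to one sign you assert without justification: since $x^+_{\alpha_1+\alpha_2,m}=\tau_2(x^+_{1,m})=[x^+_{2,0},x^+_{1,m}]$ by \eqref{eq:tauduexpp}, antisymmetry gives $[x^+_{1,r},x^+_{2,s}]=-x^+_{\alpha_1+\alpha_2,r+s}$, which is what the paper's proof of i) actually derives (in contradiction with the sign printed in the statement of i); that inconsistency is internal to the paper, but you should not paper it over by declaring the constant to be $+1$).

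There is, however, a genuine gap in your treatment of iii), and it is precisely the phenomenon you yourself flag in i) and then fail to re-apply. You call iii) a ``class-two case'', i.e.\ you claim $[A,B]$, with $A=\tfrac12X^+_{1,r}u$ and $B=x^+_{2,s}v$, commutes with everything. But $[A,B]=2x^+_{2\alpha_1+\alpha_2,r+s}uv$, and $[x^+_{2,s},x^+_{2\alpha_1+\alpha_2,r+s}]$ lies in the root space of $2(\alpha_1+\alpha_2)+(r+2s)\delta$: here $r$ is necessarily odd (otherwise $X^+_{1,r}$ does not exist), so $r+2s$ is odd and this bracket is a \emph{nonzero} multiple of $X^+_{2,r+2s}$ --- indeed the paper's own computation in the proof of iv) gives $[x^+_{2\alpha_1+\alpha_2,m},x^+_{2,s}]=-\tfrac12X^+_{2,m+s}$. (Contrast with i), where the analogous bracket has even loop degree $2r+2s$ and vanishes, exactly as you observed.) Hence $e^{-\mathrm{ad}_B}A$ does \emph{not} truncate at $A+[A,B]$: it acquires the further term $\tfrac12[B,[B,A]]uv^2=-\tfrac12X^+_{2,r+2s}uv^2$, after which all terms commute pairwise and the correct identity reads
\begin{align}
\exp(\tfrac12X^+_{1,r}u)\exp(x^+_{2,s}v)=\exp(x^+_{2,s}v)\exp(\tfrac12X^+_{1,r}u)\exp(2x^+_{2\alpha_1+\alpha_2,r+s}uv)\exp(-\tfrac12X^+_{2,r+2s}uv^2).
\end{align}
So iii) as stated is false and your argument for it is invalid at the truncation step. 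To be fair, the paper's own proof of iii) commits the same error, invoking \cite{DP}, Lemma 2.3, vi) outside its hypotheses, so what you have found (or rather, stumbled into) is a needed correction to the paper rather than a defect peculiar to your write-up; note that the missing factor is again a divided-power generator $\tfrac12X^+_{2,\bullet}$ of $\buz^+$, so the downstream application in Theorem \ref{buzpiudue} is unaffected once iii) is amended.
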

\begin{proof}
Proof of Equation \eqref{eq:commu1}:\\
From \cite{DP},Lemma 2.3,vi) follows that
    \begin{align}
   &\exp(x^+_{1,r}u)\exp(x^+_{2,s}v)
   =\exp(x^+_{2,s}v)\exp(x^+_{1,r}u+[x^+_{1,r},x^+_{2,s}]uv+\frac{1}{2}[x^+_{1,r},[x^+_{1,r},x^+_{2,s}]u^2v)
   \\& =\exp(x^+_{2,s}v)\exp(x^+_{1,r}u)\exp([x^+_{1,r},x^+_{2,s}]uv)\exp(-\frac{1}{2}[x^+_{1,r},[x^+_{1,r},x^+_{2,s}]]u^2v))\\
    \end{align} where the last equality follows from \cite{DP},Lemma 2.3,viii).\\
    Using Relations \eqref{eq:solorpius} and \eqref{eq:rrscambiasegno} follows that 
    \begin{align}
        [x^+_{1,r},x^+_{2,s}]=-[x^+_{2,0},x^+_{1,r+s}]=-x^+_{\alpha_1+\alpha_2,s+r}
    \end{align}
    and \begin{align}
        -\frac{1}{2}[x^+_{1,r},[x^+_{1,r},x^+_{2,s}]]=(-1)^{r+1}([x^+_{1,0},[x^+_{1,0},x^+_{2,s+2r}])=(-1)^{r+1}x^+_{2\alpha_1+\alpha_2,s+2r}.
    \end{align}
 Proof of Equation \eqref{eq:commu2}:
 from \cite{DP},Lemma 2.3,iv) follows that
    \begin{align}
        &\exp(x^+_{1,r}u)\exp(x^+_{\alpha_1+\alpha_2,s}v)=
        \exp(x^+_{\alpha_1+\alpha_2,r}v)
        \exp([x^+_{1,r},x^+_{\alpha_1+\alpha_2,s}]uv)
        \exp(x^+_{1,r}u).
    \end{align} 
    Using Relations \eqref{eq:solorpius} and \eqref{eq:rrscambiasegno} we have that
    \begin{align}
        &[x^+_{1,r},x^+_{\alpha_1+\alpha_2,s}]=[[x^+_{1,r},[x^+_{2,0},x^+_{1,s}]]=
       -[[x^+_{1,r},[x^+_{1,s},x^+_{2,0}]] \\
       &=-[x^+_{1,r},[x^+_{1,r},x^+_{2,s-r}]]=(-1)^{r}[x^+_{1,0},[x^+_{1,0},x^+_{2,s+r}]]=2(-1)^{r}x^+_{2\alpha_1+\alpha_2,r+s}.\\&
    \end{align}
Proof of Equation \eqref{eq:commu3}, from \cite{DP},Lemma 2.3,vi) we get:
    \begin{align}
        \exp(X^+_{1,r}u)\exp(x^+_{2,s}v)
        =\exp(x^+_{2,s}v)\exp(X^+_{1,r}u)
        \exp([X^+_{1,r},x^+_{2,s}]uv),
    \end{align}
    the claim follows observing that:
\begin{align}
    [X^+_{1,r},x^+_{2,s}]&=[[x^+_{1,r},x^+_{1,0}],x^+_{2,s}]=-[x^+_{2,s},[x^+_{1,r},x^+_{1,0}]]\\
    &=\big([x^+_{1,0},[x^+_{2,s},x^+_{1,r}]]+[x^+_{1,r},[x^+_{1,0},x^+_{2,s}]]\big)\\
    &=[x^+_{1,0},[x^+_{1,0},x^+_{2,s+r}]]-[x^+_{1,r},[x^+_{1,r},x^+_{2,s-r}]]\\
    &=2x^+_{2\alpha_1+\alpha_2,s+r}+(-1)^{r+1}[x^+_{1,0},[x^+_{1,0},x^+_{2,s+r}]]\\
    &=4x^+_{2\alpha_1+\alpha_2,s+r}.
\end{align}
Proof of Equation \eqref{eq:commu4} from \cite{DP},Lemma 2.3,iv) follows that
\begin{align}
    &\exp(x^+_{2,r}u)\exp(x^+_{2\alpha_1+\alpha_2,s}v)=
    \exp(x^+_{2\alpha_1+\alpha_2,r}v)
    \exp([x^+_{2\alpha_1+\alpha_2,r},x^+_{2,r}]uv)\exp(x^+_{2,s}u)
\end{align}
hence the claim follows observing that:
\begin{align}
[x^+_{2\alpha_1+\alpha_2,r},x^+_{2,s}]&=\frac{1}{2}[[x^+_{1,0},[x^+_{1,0},x^+_{2,r}]],x^+_{2,s}]\\&=-\frac{1}{2}[[x^+_{2,s},x^+_{1,0}],[x^+_{1,0},x^+_{2,r}]]=-\frac{1}{2}X^+_{2,r+s}.
\end{align}
\end{proof}

\begin{corollary}
    $\buz^{\pm}\subseteq \uz$, more precisely :
    \begin{enumerate}
        \item $(x_{\alpha_1+\alpha_2,r}^+)^{(k)}$, $(x_{2\alpha_1+\alpha_2,r}^+)^{(k)}$ and $(\frac{1}{2}X_{2,2r+1}^+)^{(k)}$ belong to the $\Z$-subalgebra of $\u$ generated by the $(x_{i,r}^+)^{(k)}$s, in particular they belong to $\uz\cap \u^+$.
        \item $(\frac{1}{2}X_{1,2r+1}^+)^{(k)}\in \uz\cap \u^+$ even if it does not belong to the $\Z$-subalgebra generated by the $(x_{i,r}^+)^{(k)}$s.
    \end{enumerate}
\end{corollary}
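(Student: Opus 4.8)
The guiding idea is that each straightening identity of Lemma~\ref{commuplus} is an equality in $\tu^+[[u,v]]$ whose left-hand side, together with the outer exponential factors on the right, is built from generators already known to have integral divided powers; after moving those outer factors to the other side (they are invertible, with inverses again of the form $\exp(-x\,u)\in\uz^+[[u]]$), the remaining exponential product lies in $\uz^+[[u,v]]$ and hence so does every one of its coefficients. Applying this to \eqref{eq:commu1}: after transposing $\exp(x^+_{2,s}v)\exp(x^+_{1,r}u)$ we are left with $\exp(x^+_{\alpha_1+\alpha_2,r+s}uv)\exp((-1)^{r+1}x^+_{2\alpha_1+\alpha_2,2r+s}u^2v)\in\uz^+[[u,v]]$. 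Since the second factor carries $u$-degree at least twice its $v$-degree, the coefficient of $u^kv^k$ is exactly $(x^+_{\alpha_1+\alpha_2,r+s})^{(k)}$, while the coefficient of $u^{2k}v^k$ is $\big((-1)^{r+1}x^+_{2\alpha_1+\alpha_2,2r+s}\big)^{(k)}=(-1)^{(r+1)k}(x^+_{2\alpha_1+\alpha_2,2r+s})^{(k)}$. Letting $r,s$ vary over $\Z$, all divided powers of $x^+_{\alpha_1+\alpha_2,m}$ and of $x^+_{2\alpha_1+\alpha_2,m}$ lie in $\uz^+$.

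The same coefficient extraction applied to \eqref{eq:commu4} produces the third element of the first assertion: its outer factors $\exp(x^+_{2\alpha_1+\alpha_2,r}v)$ and $\exp(x^+_{2,s}u)$ are now both integral, so transposing them leaves the middle factor $\exp(-\tfrac12 X^+_{2,r+s}uv)\in\uz^+[[u,v]]$, whose coefficient of $(uv)^k$ is $(-1)^k(\tfrac12 X^+_{2,r+s})^{(k)}$. As $r+s$ ranges over all odd integers this gives $(\tfrac12 X^+_{2,2m+1})^{(k)}\in\uz^+$ for all $m,k$, settling the first assertion.

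For the second assertion the factor $\tfrac12$ forces us out of the positive subalgebra and into the negative generator $x^-_{2,0}$. Combining \eqref{eq:tauduexgp} with \eqref{eq:formulaH} of Lemma~\ref{techuno} yields $\text{ad}(x^-_{2,0})^2(X^+_{2,r})=X^+_{1,r}$, and a weight count gives $\text{ad}(x^-_{2,0})^3(X^+_{2,r})=[x^-_{2,0},X^+_{1,r}]=0$, since $2\alpha_1-\alpha_2$ is not a root of $B_2$. Hence the conjugate
\[
\exp(x^-_{2,0}u)\exp\big(\tfrac12 X^+_{2,2r+1}v\big)\exp(-x^-_{2,0}u)=\exp\big(Y(u)\,v\big),
\]
where $Y(u)=\tfrac12 X^+_{2,2r+1}+u\,[x^-_{2,0},\tfrac12 X^+_{2,2r+1}]+\tfrac{u^2}{4}X^+_{1,2r+1}$ has $u$-degree exactly $2$, is an equality in $\uz[[u,v]]$: each of the three left-hand factors is an exponential of an element with integral divided powers, the middle one by the first assertion. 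Reading off the coefficient of $v^k$ gives $Y(u)^{(k)}=\tfrac{1}{k!}Y(u)^k\in\uz[[u]]$, and because every factor of $Y(u)$ has $u$-degree at most $2$, the coefficient of $u^{2k}$ can only arise by choosing the top term $\tfrac{u^2}{4}X^+_{1,2r+1}$ in each of the $k$ factors; it therefore equals $\big(\tfrac14 X^+_{1,2r+1}\big)^{(k)}=2^{-k}\big(\tfrac12 X^+_{1,2r+1}\big)^{(k)}\in\uz$. Multiplying by the integer $2^k$ clears the denominator and yields $(\tfrac12 X^+_{1,2r+1})^{(k)}\in\uz\cap\u^+$.

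Finally, the generators of $\buz^+$ are the $(x^+_{i,r})^{(k)}$, which lie in $\uz$ by definition, together with the $(\tfrac12 X^+_{1,2r+1})^{(k)}$ just shown to lie in $\uz$; hence $\buz^+\subseteq\uz$, and applying the antiautomorphism $\Omega$ of Definition~\ref{aqautomor} (which interchanges the positive and negative generators and so preserves $\uz$) gives $\buz^-\subseteq\uz$. The \emph{one delicate point} is the second assertion: one must recognise that $\tfrac12 X^+_{1,2r+1}$ is genuinely unreachable inside the positive subalgebra — this is exactly the $A_2^{(2)}$-at-node-$1$ phenomenon, mirroring the strict inclusion $\tuz\subsetneq\buz$ — and one must verify that the adjoint string $\text{ad}(x^-_{2,0})$ terminates at the second step, so that the top $u$-coefficient of $Y(u)^{(k)}$ is the clean divided power $\big(\tfrac14 X^+_{1,2r+1}\big)^{(k)}$ with no contamination from lower brackets.
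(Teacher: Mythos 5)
Your proof of the first assertion is correct and is the paper's own argument: transpose the outer exponentials in \eqref{eq:commu1}, read off the coefficients of $u^kv^k$ and $u^{2k}v^k$ to obtain $(x^+_{\alpha_1+\alpha_2,m})^{(k)}$ and $(x^+_{2\alpha_1+\alpha_2,m})^{(k)}$, then do the same with \eqref{eq:commu4} to obtain $(\tfrac12X^+_{2,2m+1})^{(k)}$. For the second assertion you replace the paper's one-line proof --- $\uz$ is $\tau_2$-stable, so $(\tfrac12X^+_{1,2r+1})^{(k)}=\tau_2\big((\tfrac12X^+_{2,2r+1})^{(k)}\big)\in\uz$, with non-membership in $\Z\langle(x^+_{i,r})^{(k)}\rangle$ being the rank-one statement of Section \ref{theothers} --- by an explicit conjugation by $\exp(x^-_{2,0}u)$. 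That route is legitimate in principle (it is exactly the mechanism by which $\tau_2$-stability of $\uz$ is verified), and your two structural checks --- that the adjoint string $\mathrm{ad}(x^-_{2,0})$ terminates after two steps, and that the $u^{2k}$-coefficient of $Y(u)^{(k)}$ is the clean top power with no contamination from lower brackets --- are both right.

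The genuine gap is the constant you feed into this machine. You take $(\mathrm{ad}\,x^-_{2,0})^2(X^+_{2,r})=X^+_{1,r}$ from \eqref{eq:tauduexgp} combined with \eqref{eq:formulaH}; but the factor $\tfrac14$ in \eqref{eq:tauduexgp} is an arithmetic slip of the paper, inconsistent with Definition \ref{xgranditau} and with the rest of the text. On a three-dimensional $\goths$-string with lowest weight vector $v$ one has $\tau(v)=\tfrac12(\mathrm{ad}\,e)^2(v)$: this is exactly what the paper obtains in the structurally identical computation \eqref{eq:tauuxp}, and substituting \eqref{eq:formulaF}--\eqref{eq:formulaH} into the displayed sum in the paper's proof of \eqref{eq:tauduexgp} also yields $\tfrac12$, not $\tfrac14$. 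Hence $X^+_{2,r}=\tau_2(X^+_{1,r})=\tfrac12[x^+_{2,0},[x^+_{2,0},X^+_{1,r}]]$ and $(\mathrm{ad}\,x^-_{2,0})^2(X^+_{2,r})=2X^+_{1,r}$. The discrepancy is not cosmetic: with your constant the top term of $Y(u)$ is $\tfrac{u^2}{4}X^+_{1,2r+1}$, so your argument proves $(\tfrac14X^+_{1,2r+1})^{(k)}\in\uz$, and this statement is \emph{false}. Indeed $(\tfrac14X^+_{1,2r+1})^{(k)}=2^{-k}(\tfrac12X^+_{1,2r+1})^{(k)}$ lies outside the $\Z$-span of the basis of $\uz\cap\u^+$ given in Theorems \ref{buzpiudue} and \ref{aquattrobuz} for every $k\ge1$; worse, conjugating it around by the $\tau_i$'s and $\Omega$ would place the Kac--Moody generators $(e_0)^{(k)}=(\tfrac14X^-_{\theta,1})^{(k)}$ of Remark \ref{isomorphism} inside $\uz$ and force $\uz=\uz^{K.M.}$, destroying the strict inclusion that is the main point of the paper. (Your final step, multiplying by the integer $2^k$, is logically sound; the premise it starts from is not.) The repair is immediate: with the correct constant, $Y(u)=\tfrac12X^+_{2,2r+1}+u\,[x^-_{2,0},\tfrac12X^+_{2,2r+1}]+\tfrac{u^2}{2}X^+_{1,2r+1}$, the coefficient of $u^{2k}v^k$ is exactly $(\tfrac12X^+_{1,2r+1})^{(k)}$, no rescaling is needed, and your conjugation argument becomes a correct, pleasantly explicit substitute for the paper's appeal to $\tau_2$-stability.
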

\begin{proof}\begin{enumerate}
    \item 
    From Lemma \ref{commuplus},i)  it follows that \begin{align}
        \exp(x_{\alpha_1+\alpha_2,r}^+uv)\exp(x_{2\alpha_1+\alpha_2,r}^+u^2v)\in \Z((x_{i,r}^+)^{(k)}\mid i\in I, r\in \Z, k\in \N)[[u,v]],
    \end{align} then considering the coefficients of $u^kv^k$ and of $u^{2k}v^k$ we get that \begin{align}
    (x_{\alpha_1+\alpha_2,r}^+)^{(k)},
    (x_{2\alpha_1+\alpha_2,r}^+)^{(k)}\in \Z((x_{i,r}^+)^{(k)}\mid i\in I, r\in \Z, k\in \N),\end{align} then Lemma \ref{commuplus},iv) implies that \begin{align}
        (\frac{1}{2}X_{2,2r+1}^+)^{(k)}\in \Z((x_{i,r}^+)^{(k)}\mid i\in I, r\in \Z, k\in \N);
    \end{align}
    \item $\uz$ is $\tau_2$-invariant, hence \begin{align}
        \u^+\ni(\frac{1}{2}X_{1,2r+1}^+)^{(k)}=\tau_2(\frac{1}{2}X_{2,2r+1}^+)^{(k)}\in\uz,
    \end{align} but $(\frac{1}{2}X_{1,2r+1}^+)^{(k)}\not\in \Z ((x_{i,r}^+)^{(k)}\mid i\in I, r\in \Z, k\in \N)$ (see Section \ref{theothers}).
    \end{enumerate}
\end{proof}
 
\begin{theorem}\label{buzpiudue}
$\buz^+\subseteq\uz\cap\u^+$ and $\buz^-\subseteq\uz\cap\u^-$ are integral form of respectively  $\u^+$ and $\u^-$, a $\Z$-basis of $\uz^\pm$ is given by the ordered monomials of the set:
\begin{align}
    \{(x^\pm_{\alpha,r})^{(k)},(\frac{1}{2}X^{\pm}_{i,2r+1})^{(k)}\mid \alpha\in \Phi^+_{0},i\in I, r\in \Z, k\in \N\}.
\end{align}
\end{theorem}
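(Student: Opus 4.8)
The plan is to prove everything for $\buz^+$ and then transfer to $\buz^-$ via the antiautomorphism $\Omega$. The inclusions $\buz^\pm\subseteq\uz\cap\u^\pm$ are essentially already in hand: the preceding Corollary records $\buz^\pm\subseteq\uz$, and $\buz^+\subseteq\u^+$ holds by construction because all defining generators lie in $\u^+$; the same Corollary shows the non-obvious generator $(\tfrac12 X^+_{1,2r+1})^{(k)}$ lies in $\uz\cap\u^+$. So the real content is to show that the ordered monomials in the divided powers of the positive real root vectors $\{(x^+_{\alpha,r})^{(k)},(\tfrac12 X^+_{i,2r+1})^{(k)}\mid\alpha\in\Phi_0^+,\,i\in I,\,r\in\Z,\,k\in\N\}$ form a $\Z$-basis of $\buz^+$. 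The integral-form assertion is then automatic, since such a basis is simultaneously a $\Q$-basis of $\u^+$.

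First I would verify that the proposed basis set is contained in $\buz^+$ and generates it. By the preceding Corollary the divided powers $(x^+_{\alpha_1+\alpha_2,r})^{(k)}$, $(x^+_{2\alpha_1+\alpha_2,r})^{(k)}$ and $(\tfrac12 X^+_{2,2r+1})^{(k)}$ all lie in the $\Z$-subalgebra generated by the $(x^+_{i,r})^{(k)}$, hence in $\buz^+$, whereas $(\tfrac12 X^+_{1,2r+1})^{(k)}$ is a defining generator. Thus every element of the proposed set lies in $\buz^+$; conversely the defining generators of $\buz^+$ are themselves among these root-vector divided powers, so $\buz^+$ coincides with the $\Z$-algebra they generate. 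In particular the $\Z$-span of the ordered monomials is contained in $\buz^+$.

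The heart of the proof is the reverse spanning statement. I would fix a convex total order on the positive real roots of $A_4^{(2)}$ refining the finite convex order attached to the reduced word $\sigma_2\sigma_1\sigma_2\sigma_1$ fixed above (with the loop parameter $r$ as a secondary index), extended to place the long root vectors $\tfrac12 X_i$, and then show by induction on the total $\delta$-degree and on the number of out-of-order adjacent pairs that every product of root-vector divided powers is a $\Z$-linear combination of ordered monomials. Each adjacent swap between generators attached to \emph{distinct} roots is resolved by one of the straightening identities \eqref{eq:commu1}--\eqref{eq:commu4} of Lemma \ref{commuplus}: extracting the coefficients of $u^av^b$ rewrites the out-of-order product as the correctly ordered pair times divided powers of root vectors lying strictly between them in the convex order, with integer coefficients (the normalization $\tfrac12 X_i$ is precisely what absorbs the factors $2$ and $-\tfrac12$ appearing in \eqref{eq:commu2}, \eqref{eq:commu3}, \eqref{eq:commu4}). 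Swaps among vectors attached to the \emph{same} root are governed by the rank-one subalgebras: the node-$1$ subalgebra is a copy of $\buz(A_2^{(2)})$ via the embedding $\varphi$ (Theorem \ref{buztheorem}), its $\tau_2$-translate accounts for the root $\alpha_1+\alpha_2$, the node-$2$ subalgebra is a copy of $\uz(A_1^{(1)})$ via $\phi_2$ (Remark \ref{embeddings}), and the remaining pairs commute. Convexity guarantees that the newly created vectors are intermediate in the order, so the Levendorskii--Soibelman-type straightening terminates and yields spanning by ordered monomials.

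Finally, for linear independence I would pass to $\Q$: the divided-power root vectors differ from a Poincar\'e--Birkhoff--Witt basis of $\u^+$ only by nonzero rational scalars, so the ordered monomials are a $\Q$-basis of $\u^+$ and in particular $\Z$-linearly independent. Together with the previous steps this shows they form a $\Z$-basis of $\buz^+$, so $\buz^+$ is a free $\Z$-module with $\Q\otimes_\Z\buz^+\cong\u^+$, i.e. an integral form of $\u^+$. Applying $\Omega$, which sends $\u^+$ to $\u^-$ and carries each positive root vector to the corresponding negative one with $\Omega(\buz^+)=\buz^-$ (Proposition \ref{stbuzmuz}), transports the entire conclusion to $\buz^-$. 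The main obstacle is the spanning step: one must check both that the chosen order is genuinely convex, so that Lemma \ref{commuplus} always produces intermediate root vectors and the rewriting cannot cycle, and that after the $\tfrac12$-rescaling of the $X_i$ every structure constant that arises is an integer, keeping the result inside the $\Z$-span rather than merely the $\Q$-span.
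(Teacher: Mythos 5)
Your proposal is correct, but it reaches the basis statement by a different route than the paper. Both arguments use Lemma \ref{commuplus} as the straightening engine, but the paper never straightens $\buz^+$ directly: it first deduces from Lemma \ref{commuplus} that the subalgebra generated by the simple-root divided powers $(x^+_{i,r})^{(k)}$ alone has a PBW-type basis of ordered monomials in $\{(x^+_{\alpha,r})^{(k)},(\tfrac12 X^+_{2,2r+1})^{(k)},(X^+_{1,2r+1})^{(k)}\}$ --- note the \emph{unhalved} $X^+_1$, since $(\tfrac12 X^+_{1,2r+1})^{(k)}$ does not lie in that subalgebra --- and then obtains the halved generators by conjugation: $\uz$ is $\tau_i$-stable, the $W_0$-orbit of this set (e.g.\ $\tau_2\bigl((\tfrac12 X^+_{2,2r+1})^{(k)}\bigr)=(\tfrac12 X^+_{1,2r+1})^{(k)}$) produces all the positive and negative halved root vectors, and intersecting with $\u^+$ isolates the claimed positive basis. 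You instead keep $(\tfrac12 X^+_{1,2r+1})^{(k)}$ as a generator from the outset and run a Levendorskii--Soibelman-type convex-order straightening entirely inside $\u^+$, delegating same-root swaps to the embedded rank-one forms ($\buz(A_2^{(2)})$ at node $1$ and its $\tau_2$-translate, $\uz(A_1^{(1)})$ at node $2$) and deducing linear independence from the PBW theorem over $\Q$. What your route buys is self-containedness: you avoid the paper's final intersection step with $\u^+$, which implicitly rests on a triangular decomposition of $\uz$ and is only sketched, and your bookkeeping of how the $\tfrac12$-normalization absorbs the structure constants $2$ and $-\tfrac12$ from \eqref{eq:commu2}--\eqref{eq:commu4} is explicit. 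What the paper's route buys is brevity: Weyl-group conjugation transports the already-straightened basis and yields the halved $X^+_1$ vectors for free, with no need to check any commutation involving $\tfrac12 X^+_1$ beyond Lemma \ref{commuplus} iii). One small correction: for the transfer to the negative part you cite Proposition \ref{stbuzmuz}, which concerns the rank-one algebra $A_2^{(2)}$; in the present setting the equality $\Omega(\buz^+)=\buz^-$ follows directly from the action of $\Omega$ on the generators in Definitions \ref{aqautomor} and \ref{bhuz}.
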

\begin{proof}
 From Lemma \ref{commuplus} follows that the $\Z$-subalgebra of $\uz$ generated by $\{(x^+_{i,r})^{(k)}\mid i\in I, r\in \Z, k\in \N\}$ has basis consisting in the ordered monomials in the set \begin{align}
        \{(x^+_{\alpha,r})^{(k)},(\frac{1}{2}X^+_{2,2r+1})^{(k)},(X^+_{1,2r+1})^{(k)}\mid \alpha\in\Phi^+_{0}, r\in \Z, k\in \N\},
    \end{align} moreover,
    \begin{align}
        W_0\cdot&\{(x^+_{\alpha,r})^{(k)},(\frac{1}{2}X^+_{2,2r+1})^{(k)},(X^+_{1,2r+1})^{(k)}\mid \alpha\in \Phi^+_{0},r\in \Z, k\in \N\}=\\
        &\{(x^\pm_{\alpha,r})^{(k)},(\frac{1}{2}X^{\pm}_{i,2r+1})^{(k)}\mid \alpha\in \Phi^+_{0},i\in I, r\in \Z, k\in \N\},
    \end{align} then the claim follows observing that \begin{align}
        \tu^{+}\cap\{(x^\pm_{\alpha,r})^{(k)},(\frac{1}{2}X^{\pm}_{i,2r+1})^{(k)}\mid \alpha\in \Phi^+_{0},i\in I, r\in \Z, k\in \N\}=\\\{(x^+_{\alpha,r})^{(k)},(\frac{1}{2}X^{+}_{i,2r+1})^{(k)}\mid \alpha\in \Phi^+_{0},i\in I, r\in \Z, k\in \N\}.
    \end{align}   
\end{proof}
 $\uz^{0,\pm}$ is generated by the coefficients of $\hat h_1^\pm(u)$, $\bar h_1^\pm(u)$ and $\hat h_2^\pm(u)$ and $\uz^{\h}=\Z^{(bin)}[h_{i,0},c\mid i\in I]$
\begin{proposition}\label{menocartanpiu}
    The following identities hold in $\u[[u,v]]$:
\begin{align}
&\check h_1^+(u)\hat h_2^-(v)=\hat h_2^-(v)(1-uv)^{c}\check h_1^+(u)\label{eq:etadue},\\
&\hat h_1^+(u)\hat h_2^-(v)=\hat h_2^-(v)(1-uv)^{2c}\hat h_1^+(u)\label{eq:etaduedue},\\&\bar h_1^+(u)\hat h_2^-(v)=\hat h_2^-(v)(1-(uv)^2)^{c}\bar h_1^+(u)\label{eq:etaduetre}.
\end{align}
In particular $\uz^{0}=\uz^{0,-}\uz^{\h}\uz^{0,+}$ and $\uz^0$ is an integral form of $\tu^0$.
$\muz^{0}=\muz^{0,-}\muz^{\h}\muz^{0,+}$ and $\muz^0$ is an integral form of $\tu^0$.
\end{proposition}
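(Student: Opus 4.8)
The plan is to obtain the three displayed identities from a single central bracket between the two nodes via the Baker--Campbell--Hausdorff formula, exactly as Proposition~\ref{muzubuzubo} is proved for one node, and then to read off the triangular factorisations of $\uz^0$ and $\muz^0$ from the central correction factors that appear.

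First I would record the cross-node bracket. Because $(1,2)\neq(1,1)$ we have $a_{1,2;r}=a_{1,2}$, so the Cartan relation of Definition~\ref{aquattrodef} gives $[h_{1,r},h_{2,-s}]=r\,\delta_{r,s}\,a_{1,2}\tfrac{2c}{d_2}$; using the symmetry $d_1a_{1,2}=d_2a_{2,1}$ together with $d_1=1$ and $a_{2,1}=-1$ this equals $-2r\,\delta_{r,s}\,c$, which is central. Setting $X=\log\hat h_1^+(u)=\sum_{r>0}(-1)^{r-1}\tfrac{h_{1,r}}{r}u^r$ and $Y=\log\hat h_2^-(v)=\sum_{s>0}(-1)^{s-1}\tfrac{h_{2,-s}}{s}v^s$, the double sum $[X,Y]$ collapses onto the diagonal $r=s$ and equals $2c\log(1-uv)$. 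Since $[X,Y]$ is central, $e^Xe^Y=e^Ye^Xe^{[X,Y]}$, which is precisely \eqref{eq:etaduedue}; this is the cross-node instance of \cite{DP}, Proposition~2.4, and I would invoke it in the same manner as in Proposition~\ref{muzubuzubo}. Identities \eqref{eq:etadue} and \eqref{eq:etaduetre} then follow from the same computation after rescaling: replacing $h_{1,r}$ by $\tfrac12 h_{1,r}$ halves $[X,Y]$ and produces the exponent $c$ for $\check h_1^+(u)$, while restricting $X$ to its even part---which is the definition of $\bar h_1^+$ (Definition~\ref{barradef} and Remark~\ref{rembarceckbo})---replaces $\log(1-uv)$ by $\tfrac12\log(1-(uv)^2)$ and yields the factor $(1-(uv)^2)^c$.

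For the triangular decompositions I would combine these cross-node relations with the within-node straightening already established: Proposition~\ref{muzubuzubo} governs node $1$, which is a copy of $A_2^{(2)}$ (embedding $\varphi$ of Remark~\ref{embeddings}), and the rank-one $A_1^{(1)}$ relations of \cite{DP} govern node $2$ (embedding $\tilde\psi$ of Definition~\ref{injectivemaps}). Every product of a positive generating function with a negative one can then be rewritten in the order (negative)(central)(positive), where each central factor is a power $(1-uv)^{mc}$, $(1\pm uv)^{mc/2}$ or $(1-(uv)^2)^{mc}$ with $m\in\Z$. The coefficients of such a series are binomials of the form $\binom{mc}{k}$ or $\binom{mc/2}{k}$, which are integer-valued polynomials in $c$ and hence lie in $\uz^\h$; the analogous binomials lie in $\muz^\h$. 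Thus $\uz^{0,+}\uz^{0,-}\subseteq\uz^{0,-}\uz^\h\uz^{0,+}$ (and likewise for $\muz$), so $\uz^{0,-}\uz^\h\uz^{0,+}$ is a subalgebra containing all the generators and equals $\uz^0$.

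Finally, the factorisation is an integral form because $\uz^{0,\pm}$ are integral forms of $\tu^{0,\pm}$ (Theorems~\ref{lambdabaseth} and~\ref{polbasis} for the mixed node $1$, Theorem~\ref{bun} for the symmetric node $2$) and $\uz^\h$ is an integral form of $\tu^\h$, so the multiplication map $\uz^{0,-}\otimes\uz^\h\otimes\uz^{0,+}\to\uz^0$ is an isomorphism of free $\Z$-modules lifting the $\Q$-level decomposition $\tu^0\cong\tu^{0,-}\otimes\tu^\h\otimes\tu^{0,+}$; replacing the $\hat h_1,\bar h_1$ generators by the $\check h_1$ generators and $\uz^\h$ by $\muz^\h$ gives the statement for $\muz^0$. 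The step I expect to be the main obstacle is this last straightening: one must verify that the central correction factors are genuinely integral in $\uz^\h$ (resp. $\muz^\h$) while simultaneously keeping track of the two different node structures---the non-polynomial mixed form $\Z^{(mix)}$ at node $1$ against the ordinary symmetric form at node $2$---after which closure of the product and the existence of a $\Z$-basis are immediate from the preceding results.
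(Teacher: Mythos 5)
Your proof is correct and takes essentially the same route as the paper: the paper obtains \eqref{eq:etadue} by citing \cite{DP}, Proposition 2.11 with $m=1$, $l=0$ (the same central-bracket/BCH computation you carry out explicitly) and then deduces \eqref{eq:etaduedue} and \eqref{eq:etaduetre} from $\hat h_1^+(u)=\check h_1^+(u)^2$ and $\bar h_1^+(u^2)=\check h_1^+(u)\check h_1^+(-u)$, which is your derivation run in the opposite direction (you prove the $\hat h$ identity first and recover the $\check h$ and $\bar h$ ones by halving, respectively taking the even part of, the logarithm). Your closure argument for $\uz^{0}=\uz^{0,-}\uz^{\h}\uz^{0,+}$ and $\muz^{0}=\muz^{0,-}\muz^{\h}\muz^{0,+}$ --- within-node straightening via Proposition \ref{muzubuzubo} and the rank-one relations, plus integrality of the central binomial factors in $\uz^{\h}$ (resp. $\muz^{\h}$) --- is exactly what the paper's ``hence'' leaves implicit.
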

\begin{proof}
    Equation 
    \eqref{eq:etadue} follows from \cite{DP},Propositions 2.11) 
    with $m=1$ and $l=0$, hence $\muz^{0}=\muz^{0,-}\muz^{\h}\muz^{0,+}$.
    Equations \eqref{eq:etaduedue} and \eqref{eq:etaduetre} follow form \eqref{eq:etadue} remembering that $\bar h_1^+(u)=\check h_1^+(u)\check h_1^+(-u)$ and $\hat h_1^+(u)=\check h_1^+(u)^2$,  hence $\uz^{0}=\uz^{0,-}\uz^{\h}\uz^{0,+}$
\end{proof}

\begin{lemma}
\label{cartantutta}
    $\buz^\pm\uz^\h=\uz^\h\buz^\pm$
\end{lemma}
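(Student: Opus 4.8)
The plan is to reduce the identity to the elementary fact that every spanning monomial of $\buz^\pm$ is an eigenvector for the adjoint action of each Cartan element $h_{i,0}$ with an \emph{integer} eigenvalue, and then to move the binomials $\binom{h_{i,0}}{l}$ across divided powers exactly as in Proposition \ref{buzaccazero}. Recall that $\uz^\h=\Z^{(bin)}[h_{i,0},c\mid i\in I]$ is generated by the binomials $\binom{h_{i,0}}{l}$ and $\binom{c}{s}$. Since $c$ is central it commutes with every element of $\buz^\pm$, so it suffices to show that each $\binom{h_{i,0}}{l}$ can be moved across $\buz^\pm$ in both directions while staying inside $\uz^\h\buz^\pm$.

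By Theorem \ref{buzpiudue}, $\buz^\pm$ is spanned over $\Z$ by the ordered monomials in the divided powers $(x^\pm_{\alpha,r})^{(k)}$ ($\alpha\in\Phi^+_0$) and $(\frac{1}{2} X^\pm_{i,2r+1})^{(k)}$ ($i\in I$). Each underlying root vector is an eigenvector for $\text{ad}(h_{i,0})$: from the relations $[h_{i,0},x^\pm_{j,s}]=\pm a_{i,j}x^\pm_{j,s}$ and from Lemma \ref{techuno}, formula \eqref{eq:formulaE}, together with its $\tau$-translates, each such eigenvalue is an integer. Since divided powers and products of weight vectors are again weight vectors (with the weights adding up), every spanning monomial $m$ is itself an $\text{ad}(h_{i,0})$-eigenvector, say $[h_{i,0},m]=w\,m$ with $w\in\Z$, so that $h_{i,0}m=m(h_{i,0}+w)$.

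It remains to pass $\binom{h_{i,0}}{l}$ through such an $m$. Iterating over the factors of $m$ the single divided-power identity underlying Proposition \ref{buzaccazero} (cf. \cite{DP}, Appendix~A, V), and Proposition 5.24) yields
\begin{align}
m\binom{h_{i,0}}{l}=\binom{h_{i,0}-w}{l}m,\qquad \binom{h_{i,0}}{l}m=m\binom{h_{i,0}+w}{l}.
\end{align}
Because $w\in\Z$, Vandermonde's identity $\binom{h_{i,0}\mp w}{l}=\sum_{j}\binom{\mp w}{j}\binom{h_{i,0}}{l-j}$ has integer coefficients, so the two right-hand sides lie respectively in $\uz^\h\,m\subseteq\uz^\h\buz^\pm$ and in $m\,\uz^\h\subseteq\buz^\pm\uz^\h$. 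Running the binomials one node at a time (they commute among themselves and with $\binom{c}{s}$) gives both inclusions $\buz^\pm\uz^\h\subseteq\uz^\h\buz^\pm$ and $\uz^\h\buz^\pm\subseteq\buz^\pm\uz^\h$, hence equality. The only point requiring care is the integrality of the eigenvalues $w$, which is precisely what guarantees that $\binom{h_{i,0}\mp w}{l}$ re-expands with integer coefficients; this follows at once from the defining relations once the adjoint weights of the $X^\pm_{i,2r+1}$ are known, as provided by Lemma \ref{techuno}.
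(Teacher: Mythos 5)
Your proof is correct and rests on essentially the same mechanism as the paper's: moving each binomial $\binom{h_{i,0}}{l}$ across a divided power of an integer-weight vector produces an integrally shifted binomial $\binom{h_{i,0}\mp w}{l}$, which lies in $\uz^\h$ by Vandermonde. The only (organizational) difference is that the paper verifies this just on the algebra generators $(x_{i,r}^{+})^{(k)}$ and $(\frac{1}{2}X_{1,2r+1}^{+})^{(k)}$, citing the explicit commutation formulas of \cite{DP}, Proposition 2.4 together with Equation \eqref{eq:formulaE}, and then deduces the minus case by applying $\Omega$, whereas you run the same argument over all basis monomials of $\buz^\pm$ from Theorem \ref{buzpiudue} and treat both signs symmetrically.
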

\begin{proof}
From \cite{DP},Proposition 2.4)  with $m=a_{i,j}$ and we have that 
    \begin{align}
(x_{i,r}^+)^{(k)}\binom{h_{0,j}}{l}=\binom{h_{0,j}-a_{i,j}}{l}(x_{i,r}^+)^{(k)},
    \end{align} from \cite{DP},Proposition 2.4) and Equation \eqref{eq:formulaE}
    with $m=2$ by multiplying both side for $(\frac{1}{2})^k$ we have that: \begin{align}
        (\frac{1}{2}X_{1,2r+1}^+)^{(k)}
        \binom{h_{0,2}}{ l}=\binom{h_{0,2}-2k}{ l}
        (\frac{1}{2}X_{1,2r+1}^+)^{(k)}.
    \end{align} Hence we have that \begin{align}\label{eq:nonsopiucomechiamarle}
        \buz^+\uz^\h=\uz^\h\buz^+,
    \end{align} remarking that the $(\frac{1}{2}X_{1,2r+1}^+)^{(k)}$'s  and $(x_{i,r}^+)^{(k)}$s generate $\buz$, then by applying $\Omega$ to Relation \eqref{eq:nonsopiucomechiamarle} we get 
    \begin{align}
        \buz^-\uz^\h=\buz^\h\uz^-.
    \end{align}
\end{proof}

\begin{proposition}\label{commuzeropiupiu}
    The following identities hold  in $\bu[[u]]$
    \begin{align}
    &x_{1,0}^+\hat h_2^+(u)=\hat h_2^+(u)(1+uT^{-1})(x^+_{1,0}),\label{eq: commupiuuno}\\
    &x_{2,0}^+  \check h_1^+(u)=\check h_1^+(u)(1+uT^{-1})(x_{2,0}^+)\label{eq: commupiunuova}\\
    &\frac{1}{2}X_{1,1}^+\hat h_2^+(u)=\hat h_2^+(u)(1+Tu^2)(\frac{1}{2}X_{1,1}^+).\label{eq: commupiuultima}
    \end{align}
    hence for all $k\in \N$
    \begin{align}
    &(x_{1,0}^+)^{(k)}\hat h_2^+(u)=\hat h_2^+(u)\big((1+uT^{-1})(x^+_{1,0})\big)^{(k)}\in \buz^{0,+}\buz^{+},\label{eq:kdue}\\
    &(x_{2,0}^+)^{(k)}  \check h_1^+(u)=\check h_1^+(u)((1+uT^{-1})(x_{2,0}^+))^{(k)}\in \muz^{0,+}\buz^{+}\label{eq:knuova}\\
    &(\frac{1}{2}X_{1,1}^+)^{(k)}\hat h_2^+(u)=\hat h_2^+(u)((1+Tu^2)(\frac{1}{2}X_{1,1}^+))^{(k)}\in \buz^{0,+}\buz^{+}.\label{eq:knuovanuova}
    \end{align}
    In particular $\buz^{0,+}\buz^\pm=\buz^\pm\buz^{0,+}$, $\buz^{0,-}\buz^\pm=\buz^\pm\buz^{0,-}$, $\muz^{0,+}\buz^\pm=\buz^\pm\muz^{0,+}$ and $\muz^{0,-}\buz^\pm=\buz^\pm\muz^{0,-}$, moreover are integral form of respectively $\tu^\pm\bu^{0,+}$ and  $\tu^\pm\bu^{0,-}$.
    $\muz^{0}\buz^\pm=\buz^\pm\muz^{0}$ and  $\buz^{0}\buz^\pm=\buz^\pm\buz^{0}$
    are integral form of $\tu^\pm\bu^{0}$.
\end{proposition}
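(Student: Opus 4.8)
The plan is to reduce everything to the general commutation machinery of \cite{DP} (the same formula invoked in Propositions \ref{zeropiubarra} and \ref{menocartanpiu}), feeding in the Cartan data of $A_4^{(2)}$: namely $a_{1,2}=-2$, $a_{2,1}=-1$, together with the off-diagonal bracket $[h_{2,0},X_{1,r}^+]=-2X_{1,r}^+$ from \eqref{eq:formulaE}. First I would establish the three generating-function identities \eqref{eq: commupiuuno}, \eqref{eq: commupiunuova} and \eqref{eq: commupiuultima}. Each is an instance of \cite{DP}, Proposition 2.14 for a root vector commuted past an imaginary generating function: the exponents of the factors $(1\pm uT^{\mp1})$ are dictated by the relevant structure constant ($a_{2,1}=-1$ for $x_{1,0}^+$ against $\hat h_2^+$, the half-normalized $\tfrac12 a_{1,2}=-1$ for $x_{2,0}^+$ against $\check h_1^+$, and $-2$ for $X_{1,1}^+$ against $\hat h_2^+$, which also explains the appearance of $u^2$ since $X_{1,1}^+$ carries odd degree). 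The only care needed is the bookkeeping of signs and of the shift direction of $T$, exactly as in Proposition \ref{zeropiubarra}.

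Next I would pass to divided powers. Since each identity has the shape $x\,H(u)=H(u)\,Q(u)$ with $H(u)\in\{\hat h_2^+(u),\check h_1^+(u)\}$ and $Q(u)=(1+u T^{\mp1})(x)$ a power series with coefficients in $\buz^+$, associativity gives $x^{n}H(u)=H(u)Q(u)^{n}$ and hence $\exp(tx)H(u)=H(u)\exp(tQ(u))$ for a formal parameter $t$; extracting the coefficient of $t^k$ yields $x^{(k)}H(u)=H(u)\,Q(u)^{(k)}$, which is \eqref{eq:kdue}, \eqref{eq:knuova} and \eqref{eq:knuovanuova}. The membership $Q(u)^{(k)}\in\buz^+[[u]]$ is where the structure theorem for $\buz^+$ is used: the coefficients of $Q(u)$ are $T$-translates of $x$, so they lie in the $T^{\pm1}$-stable algebra $\buz^+$ (Proposition \ref{stbuzmuz}), and $\tfrac1{k!}Q(u)^k$ reorganizes into ordered monomials in the divided powers of the $x_{i,r}^+$ and the $\tfrac12X_{1,2r+1}^+$ by the basis of Theorem \ref{buzpiudue}. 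The distinction between landing in $\buz^{0,+}\buz^+$ and in $\muz^{0,+}\buz^+$ is exactly the distinction between using $\hat h_2^+,\bar h_1^+$ (which generate $\buz^{0,+}$) and using $\check h_1^+$ (which generates $\muz^{0,+}$).

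I would then assemble the "in particular" factorizations. The three relations, together with the diagonal commutations already available — those internal to the node-$1$ copy of $A_2^{(2)}$ from Proposition \ref{zeropiubarra} and the untwisted node-$2$ copy of $A_1^{(1)}$ — and closed under $T$-translation, cover every pair consisting of a generator of $\buz^{0,+}$ and a generator of $\buz^\pm$: the cross relations for $\hat h_1^+$ and $\bar h_1^+$ against $x_{2,r}^+$ follow from the $\check h_1^+$ relation via $\hat h_1^+(u)=\check h_1^+(u)^2$ and $\bar h_1^+(u^2)=\check h_1^+(u)\check h_1^+(-u)$ (Remark \ref{leacca}), squaring the corresponding factor. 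This proves $\buz^+\buz^{0,+}\subseteq\buz^{0,+}\buz^+$, and applying the antiautomorphisms $\sigma$ and $\Omega$ (whose stabilities are recorded in Proposition \ref{stbuzmuz}) yields the reverse inclusion and all four sign/node variants, and likewise for $\muz^{0,\pm}$. Combining these with the triangular decompositions $\muz^0\cong\muz^{0,-}\otimes\muz^\h\otimes\muz^{0,+}$ and $\buz^0\cong\buz^{0,-}\otimes\buz^\h\otimes\buz^{0,+}$ of Propositions \ref{muzubuzubo} and \ref{menocartanpiu} then gives the asserted integral-form statements for $\tu^\pm\bu^0$.

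The main obstacle I anticipate is the last base identity \eqref{eq: commupiuultima} and its divided-power consequence: the "big" root vector $X_{1,1}^+$ lives in the distinguished $A_2^{(2)}$ node, so one must carry the factor $\tfrac12$ through the whole computation (keeping $\tfrac12X_{1,1}^+$ rather than $X_{1,1}^+$) and verify that $\tfrac1{k!}\big((1+Tu^2)(\tfrac12X_{1,1}^+)\big)^k$ genuinely has coefficients in $\buz^+$ and not merely in $\tfrac12\buz^+$; this is precisely the phenomenon that forces the enlargement from $\uz$ to $\buz$ and is controlled by the basis in Theorem \ref{buzpiudue}. The secondary, purely organizational difficulty is matching the $\check h_1^+$-relation, which is the natural one to prove via \cite{DP}, against the $\hat h_1^+,\bar h_1^+$-relations actually needed for the $\buz^{0,+}$ factorization, without double-counting factors of $2$.
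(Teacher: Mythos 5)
Your proposal is correct and takes essentially the same route as the paper's proof: base identities from \cite{DP}, Proposition 2.14 (the paper takes $m_1=-1$, $m_d=0$ for $d>1$), passage to divided powers via the $T^{\pm1}$-stability of $\buz^+$ and $T\vert_{\buz^{0,+}}=\mathrm{id}$, the sign/node variants by symmetry automorphisms (the paper applies $\Omega\circ\sigma$ and $\lambda_{-1}$ where you use $\sigma$ and $\Omega$), and the final integral-form assertion by combining with the Cartan commutation, which the paper cites explicitly as Lemma \ref{cartantutta}. The only cosmetic difference is that the paper deduces \eqref{eq: commupiuultima} from \eqref{eq: commupiuuno} and \eqref{eq: commupiunuova} rather than invoking \cite{DP} directly with structure constant $-2$.
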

\begin{proof}
Equations \eqref{eq: commupiuuno} and \eqref{eq: commupiunuova} and follow from \cite{DP},Proposition 2.14
with $m_1=-1$ and $m_d=0$ if $d>1$, Equation \eqref{eq: commupiuultima} follows \eqref{eq: commupiuuno} and \eqref{eq: commupiunuova}. 
Equations \eqref{eq:kdue}, \eqref{eq:knuova}  and \eqref{eq:knuovanuova} follow from \eqref{eq: commupiuuno} and \eqref{eq: commupiunuova}
since $\buz^+$ is $T$-stable and $T\vert_{\buz^{0,+}}=\text{id}$.
$\buz^+\buz^{0,+}=\buz^{0,+}\buz^+$ and $\buz^+\muz^{0,+}=\muz^{0,+}\buz^+$ follow directly, the others follow by applying $\Omega\circ \sigma$ and $\lambda_{-1}$. The last Relation follows from previous Relation and Lemma \ref{cartantutta}.
\end{proof}
We can now recollect the result of this subsection in the following Theorems:
\begin{theorem}\label{aquattrobuz}
$\uz$ it is an integral form of $\u$. More precisely:
    \begin{itemize}
        \item $\uz\cong\buz^-\otimes\buz^0\otimes\buz^+$,
        \item  $\buz^\pm=\buz\cap\u^\pm\neq\uz^\pm$;
        \item $\buz^0=\uz\cap\u^0$;
        \item $\buz^\h=\uz\cap\u^\h$;
        \item $\buz^{0,\pm}=\uz\cap\u^{0,\pm}$;
    \end{itemize}
\end{theorem}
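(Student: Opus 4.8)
The plan is to realize $\uz(A_4^{(2)})$ as the image of the multiplication map $\buz^-\otimes_\Z\buz^0\otimes_\Z\buz^+\to\u$ and then to read off all five assertions from the resulting triangular decomposition. First I would record the inclusions already in hand. By the Corollary preceding Theorem \ref{buzpiudue} together with Theorem \ref{buzpiudue} itself we have $\buz^\pm\subseteq\uz$, and the strict inequality $\buz^\pm\neq\uz^\pm$ comes from the divided powers $(\frac12 X^\pm_{1,2r+1})^{(k)}$, which by Lemma \ref{zdiv} lie in $\buz^\pm$ but not in $\uz^\pm$. For the middle factor I would invoke the rank-one embeddings of Remark \ref{embeddings}: node $1$ gives a copy of $A_2^{(2)}$ via $\varphi$ and node $2$ a copy of $A_1^{(1)}$ via $\phi_2$, and since both send divided powers of Drinfeld generators to divided powers of Drinfeld generators they carry the rank-one integral forms into $\uz$. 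Because $\buz^{0,\pm}(A_2^{(2)})\subseteq\uz(A_2^{(2)})$ (Corollary \ref{ricondurre}, \eqref{eq:EUC}, and Lemma \ref{toglitilde}) and the analogous $A_1^{(1)}$ statement holds, the generators $\hat h_{i,r}$ and $\bar h_{1,r}$ of $\buz^0$ (Definition \ref{bhuz}) all lie in $\uz$, so $\buz^0\subseteq\uz$; consequently $\buz^-\buz^0\buz^+\subseteq\uz$.

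Next I would prove the reverse inclusion by showing that $S:=\buz^-\buz^0\buz^+$ is closed under multiplication, hence a subalgebra; since $\uz$ is generated by $\buz^+\cup\buz^-$ this yields $\uz\subseteq S$. The commutations $\buz^\pm\buz^0=\buz^0\buz^\pm$ of Proposition \ref{commuzeropiupiu}, together with Lemma \ref{cartantutta}, let one push every Cartan factor into the middle, so the only nontrivial closure property left to verify is the cross-straightening $\buz^+\buz^-\subseteq\buz^-\buz^0\buz^+$.

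Establishing this straightening is the step I expect to be the main obstacle, as it is the only place where the full rank-two root combinatorics of $A_4^{(2)}$ enters (everything preceding it concerns a single Borel half). Using the explicit bases of $\buz^\pm$ from Theorem \ref{buzpiudue}, which consist of ordered monomials in the divided powers of the real root vectors $x^\pm_{\alpha,r}$ and $\frac12 X^\pm_{i,2r+1}$, I would reduce the problem to moving one positive real-root divided power past one negative one. When both root vectors attach to the same node the reduction is a rank-one computation, settled by the $A_2^{(2)}$ case (Corollary \ref{ricondurre}, \eqref{eq:EUD}) at node $1$ and by Garland's $A_1^{(1)}$ case at node $2$; when they attach to distinct nodes of the $B_2$ diagram I would use the rank-two commutation formulas of Lemma \ref{commuplus} and their negative and mixed analogues (built as in Lemma \ref{nuoveadd}) to interchange the factors, the Cartan contributions that appear being controlled by the imaginary straightenings of Propositions \ref{zeropiubarra} and \ref{menocartanpiu}. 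The $\tau_i$-invariance of $\uz$ cuts down the number of root pairs that must actually be treated.

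Finally I would show $S$ is free over $\Z$ on the product basis and deduce the intersections. The map $\buz^-\otimes_\Z\buz^0\otimes_\Z\buz^+\to\u$ factors through the classical triangular isomorphism $\u^-\otimes_\Q\u^0\otimes_\Q\u^+\xrightarrow{\sim}\u$; since each factor is, by Theorem \ref{buzpiudue} and Proposition \ref{menocartanpiu}, a free $\Z$-module that is an integral form of the corresponding $\Q$-factor, the map is injective, whence $\uz=S\cong\buz^-\otimes\buz^0\otimes\buz^+$ is an integral form of $\u$. For the intersection statements, if $z\in\uz\cap\u^+$ then expanding $z$ in the product basis and comparing with its unique PBW expression in $\u^+$ forces the $\buz^-$- and $\buz^0$-factors of every surviving monomial to be trivial, so $z\in\buz^+$; the reverse inclusion being clear, $\buz^+=\uz\cap\u^+$, and symmetrically $\buz^-=\uz\cap\u^-$. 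The same uniqueness argument applied to $\u^0$, $\u^\h$ and $\u^{0,\pm}$ gives $\buz^0=\uz\cap\u^0$, $\buz^\h=\uz\cap\u^\h$ and $\buz^{0,\pm}=\uz\cap\u^{0,\pm}$.
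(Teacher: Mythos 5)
Your proposal is correct and takes essentially the same route as the paper: $\buz^\pm\subseteq\uz$ via Theorem \ref{buzpiudue}, $\buz^{0,\pm},\buz^\h\subseteq\uz$ via the rank-one embeddings of Remark \ref{embeddings}, closure of the triangular product $\buz^-\buz^0\buz^+$ via the commutation results (Lemma \ref{cartantutta}, Propositions \ref{menocartanpiu} and \ref{commuzeropiupiu}), and finally the five itemized claims read off from the freeness of the product basis. If anything, your write-up is more careful than the paper's own four-line proof, which never explicitly verifies the cross-straightening $\buz^+\buz^-\subseteq\buz^-\buz^0\buz^+$ that you correctly single out as the main obstacle (the paper implicitly delegates it to the rank-one straightening of Corollary \ref{ricondurre} through the embeddings, to the vanishing of mixed commutators $[x^+_{i,r},x^-_{j,s}]$ for $i\neq j$, and to $\tau_i$-invariance, exactly as you propose to do).
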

\begin{proof}
From Theorem \ref{buzpiudue} it follows that  $\uz\supseteq\buz^+,\buz^-$, the other inclusion follows by the very Definitions of $\uz,\buz^+,\buz^-$. From Remark \ref{embeddings} and Corollary \ref{menocartanpiu} it follows that $\uz\supseteq\buz^{0,-},\buz^{0,\pm},\buz^{\h}$ and $\buz^{0}=\buz^{-}\otimes\buz^{0}\otimes\buz^{+}$. From Proposition \ref{commuzeropiupiu} it follows that $\uz\supseteq\buz^{-}\otimes\buz^{0}\otimes\buz^{\h}$. Since the generators of $\uz$ belongs to $\buz^{-}\otimes\buz^{0}\otimes\buz^{+}$ all the claims follow.
\end{proof}
\begin{theorem}
    $\muz\cong \buz^+\otimes\muz^0\otimes\buz^-\supsetneq\uz$ is an integral form of $\u$.
\end{theorem}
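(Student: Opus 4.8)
The plan is to repeat, almost verbatim, the proof of Theorem \ref{aquattrobuz} for $\uz$, enlarging only the imaginary factor from $\buz^0$ to $\muz^0$; all the relations needed have already been recorded, so the argument is essentially bookkeeping. The starting point is that $\muz$ is generated by the very same real-root divided powers that generate $\uz$, together with the additional Cartan elements $\check h_r$. Hence the real parts are unchanged, $\muz\cap\u^\pm=\buz^\pm$ by Theorem \ref{buzpiudue}, and the only genuine enlargement occurs in the imaginary component.

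First I would establish the factorisation $\muz=\buz^-\,\muz^0\,\buz^+$ of $\muz$ as a $\Z$-module, by checking that the $\Z$-span of the ordered products is closed under multiplication by the generators. Three straightening moves are involved, and each is already available: commuting the real parts past the imaginary part, $\buz^\pm\,\muz^0=\muz^0\,\buz^\pm$, is Proposition \ref{commuzeropiupiu}; the identity $\muz^0=\muz^{0,-}\,\muz^\h\,\muz^{0,+}$, which makes $\muz^0$ a subalgebra and an integral form of $\u^0$, is Proposition \ref{menocartanpiu}; and the straightening of $\buz^+\buz^-$ is inherited from Theorem \ref{aquattrobuz}, which already gives $\buz^+\buz^-\subseteq\buz^-\,\buz^0\,\buz^+\subseteq\buz^-\,\muz^0\,\buz^+$ because $\buz^0\subseteq\muz^0$.

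Next I would promote this set-theoretic factorisation to a tensor-product decomposition. By Poincar\'e--Birkhoff--Witt one has $\u=\u^-\otimes\u^0\otimes\u^+$, and the multiplication map $\buz^-\otimes_\Z\muz^0\otimes_\Z\buz^+\to\u$ is the restriction of this isomorphism; since $\buz^\pm$ and $\muz^0$ are integral forms of $\u^\pm$ and $\u^0$ respectively (Theorem \ref{buzpiudue} and Proposition \ref{menocartanpiu}), they are free $\Z$-modules whose $\Z$-bases remain $\Q$-bases of the ambient factors, so the map is injective. Combined with the closure established above, this yields $\muz\cong\buz^-\otimes\muz^0\otimes\buz^+$, and applying $\Omega$ gives equally the order $\buz^+\otimes\muz^0\otimes\buz^-$ asserted in the statement. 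Tensoring with $\Q$ then gives $\muz\otimes_\Z\Q=\u^-\u^0\u^+=\u$, so $\muz$ is an integral form of $\u$. Strictness $\muz\supsetneq\uz$ follows at once from $\buz^{0,\pm}=\uz\cap\u^{0,\pm}$ (Theorem \ref{aquattrobuz}) together with $\buz^{0,\pm}\subsetneq\muz^{0,\pm}$ (Remark \ref{remtildecheck}).

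The only step demanding real care is the closure claim: one must ensure that pushing a real generator through the enlarged imaginary part never leaves the span. This is exactly why Proposition \ref{commuzeropiupiu} was phrased with coefficients in $\muz^{0,\pm}$ rather than merely in $\buz^{0,\pm}$; granted that input, no new obstruction arises and the whole decomposition follows by the same reasoning already carried out for $\uz$ in Theorem \ref{aquattrobuz}.
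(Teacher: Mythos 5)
Your proposal is correct and takes essentially the same route as the paper: the paper's own proof is a one-line appeal to Theorem \ref{aquattrobuz} (the decomposition $\uz\cong\buz^-\otimes\buz^0\otimes\buz^+$ and the identifications $\buz^{0,\pm}=\uz\cap\u^{0,\pm}$) together with Proposition \ref{commuzeropiupiu} (the commutation $\muz^{0}\buz^\pm=\buz^\pm\muz^{0}$), which are exactly the two ingredients your argument organizes and expands. The additional details you supply --- the closure bookkeeping, the PBW injectivity of the multiplication map, and the strictness $\muz\supsetneq\uz$ via $\buz^{0,\pm}\subsetneq\muz^{0,\pm}$ --- are precisely what the paper leaves implicit, so no genuinely different idea is involved.
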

\begin{proof} The claim follows from Theorem \ref{aquattrobuz} and Proposition \ref{commuzeropiupiu}. 
\end{proof}

\subsection{$A_{2n}^{(2)}$, $n\ge2$}\label{ultimaspero}
We want now prove that $\uz(A_{2n}^{(2)})$ for all $n\ge2$ is an integral form of $\u$, more precisely we want to prove the following Theorem:
\begin{theorem}\label{main}
The $\Z$-subalgebra $\uz$ of $\u(A_{2n}^{(2)})$ generated by \begin{align}
    \{(x_{i,r}^+)^{(k)},(x_{i,r}^-)^{(k)}
    \mid i \in I, r \in \Z, k\in \N\}
\end{align} is an integral form of $\tu$.\\
More precisely
\begin{align}
\uz&\cong \buz^-\otimes \buz^0\otimes\buz^+\\
\buz^{0}&\cong \buz^{0,-}\otimes \buz^\h\otimes\buz^{0,+}\\
\end{align}
$\buz^\pm=\uz^\pm\cap\u$  is the $\Z$ linear span of the ordered monomials in \begin{align}
\{(x_{\alpha,r}^\pm)^{(k)}, (\frac{1}{2}X_{i,2r+1}^{\pm})^{(k)}\mid r\in \Z, \alpha\in \Phi_0^+,i\in I \},
    \end{align}
$\uz^{0,\pm}=
\uz^{0,\pm}\cap\u$,  
$\buz^{\h}
=\uz^{\h}\cap\u$.\\
\end{theorem}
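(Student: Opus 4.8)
The plan is to argue by induction on $n$, the base case $n=2$ being Theorem \ref{aquattrobuz} (together with its $\muz$ counterpart), where the only genuinely twisted junction---the double edge joining nodes $1$ and $2$---is handled by the explicit straightening of Lemma \ref{commuplus} and Theorem \ref{buzpiudue}. For the inductive step I would use the two embeddings of Definition \ref{injectivemaps}: $\bar\psi$ places a copy of $A_{2(n-1)}^{(2)}$ on the nodes $\{1,\dots,n-1\}$, retaining the distinguished twisted node $1$, while $\tilde\psi$ places a copy of the \emph{untwisted} algebra $A_{n-1}^{(1)}$ on $\{2,\dots,n\}$. By the inductive hypothesis $\bar\psi$ carries the whole structure of $\buz(A_{2(n-1)}^{(2)})$ into $\uz$, and since $A_{n-1}^{(1)}$ is untwisted Theorems \ref{KMintegral} and \ref{comparison} give $\uz=\uz^{K.M.}$ there, so $\tilde\psi$ carries Garland's integral form into $\uz$ as well. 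Reading off the Cartan matrix of Definition \ref{aquattrodef}, within the Drinfeld nodes $\{1,\dots,n\}$ the sole multiple edge joins $1$ and $2$; hence every junction created by adjoining the new node $n$ (for $n\ge3$) is an ordinary single ($A_2$-type) edge, and all twisted phenomena stay confined near node $1$, inherited through $\bar\psi$.

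First I would establish $\buz^+=\uz\cap\u^+$, the negative part following by the antiautomorphism $\Omega$ of Definition \ref{aqautomor}. Decomposing $\Phi^{re,+}$ as in Notation \ref{rootsystemadueennedue}, every positive real root is either supported on $\{1,\dots,n-1\}$ (covered by the $\bar\psi$-block), supported on $\{2,\dots,n\}$ (covered by the $\tilde\psi$-block), or spans both ends of the diagram. The spanning roots are obtained from vectors already in $\uz$ by a single reflection: $\tau_n(x^\pm_{\alpha_1+\dots+\alpha_{n-1},r})=\pm x^\pm_{\alpha_1+\dots+\alpha_n,r}$, the new medium roots arise likewise, and $X^\pm_{n,2r+1}=\tau_n(X^\pm_{n-1,2r+1})$ by Definition \ref{xgranditau}; since $\uz$ is $\tau_n$-stable and $\tau_n$ is the untwisted reflection at the new node, these vectors---and, crucially, the halved divided powers $(\tfrac12 X^\pm_{n,2r+1})^{(k)}=\tau_n((\tfrac12 X^\pm_{n-1,2r+1})^{(k)})$---lie in $\uz$. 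The straightening of any two real root vectors reduces to a rank-two subsystem, which for a junction involving node $n$ is of type $A_2$ (or $A_1\times A_1$) and so is governed by the untwisted formulas of \cite{DP}, Lemma 2.3; the only $B_2$-type relations are those at the $(1,2)$ junction, imported via $\bar\psi$. Running this straightening over all pairs of simple directions, as in Theorem \ref{buzpiudue}, shows that the ordered monomials in $\{(x^\pm_{\alpha,r})^{(k)},(\tfrac12 X^\pm_{i,2r+1})^{(k)}\mid \alpha\in\Phi_0^+,\,i\in I\}$ span $\buz^\pm$; their $\Z$-linear independence is automatic, since up to scaling they form a PBW $\Q$-basis of $\u^\pm$.

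Next I would treat the Cartan--imaginary part and its factorisation $\buz^0\cong\buz^{0,-}\otimes\buz^\h\otimes\buz^{0,+}$. Node by node $\buz^{0,+}$ restricts to the mixed form $\Z^{(mix)}[h_{1,r}\mid r>0]$ at the twisted node $1$ (Theorems \ref{lambdabaseth} and \ref{polbasis}) and to Garland's symmetric form $\Z_\lambda[h_{i,r}\mid r>0]$ at the nodes $i\ge2$ (Theorem \ref{bun}); the generators $\hat h_{i,r}$ lie in $\uz$ by Garland, while $\bar h_{1,r}$ and $\hat h_{1,r}^{\{c\}}$ do by Corollary \ref{ricondurre}, Equation \eqref{eq:EUC} and Theorem \ref{toglicerchietti}. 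That these exhaust $\uz\cap\u^{0,\pm}$ follows from the same embedding argument. The commutation of the positive against the negative imaginary generators across all nodes is then obtained from the series identities of \cite{DP} exactly as in Proposition \ref{menocartanpiu}, which simultaneously yields the factorisation of $\buz^0$ and that it is an integral form of $\u^0$; the diagonal part $\buz^\h=\uz\cap\u^\h=\Z^{(bin)}[h_{i,0},c\mid i\in I]$ is handled by the binomial computations of Lemma \ref{cartantutta} and Proposition \ref{buzaccazero}.

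Finally, assembling the global triangular decomposition $\uz\cong\buz^-\otimes\buz^0\otimes\buz^+$ requires the cross-commutation of the three factors, namely $\buz^\pm\buz^0\subseteq\buz^0\buz^\pm$ and $\buz^+\buz^-\subseteq\buz^-\buz^0\buz^+$, generalising Propositions \ref{zeropiubarra}, \ref{commuzeropiupiu} and Corollary \ref{ricondurre}; each of these follows from the corresponding series identity applied one node at a time. I expect the principal obstacle to be the bookkeeping at the junction of the two embeddings: one must verify that the rank-two reduction remains valid in the loop (affine) grading, that no straightening relation beyond the $A_2$- and $B_2$-types is needed for the spanning medium and long roots, and above all that the halved vectors $\tfrac12 X^\pm_{i,2r+1}$ stay integral all the way along the diagram, so that the $\bar\psi$- and $\tilde\psi$-images glue into a single free $\Z$-module with the asserted basis. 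Once this compatibility is checked, $\Q\otimes_\Z\uz=\u$ is immediate from the generators, which would complete the proof.
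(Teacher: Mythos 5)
Your overall architecture coincides with the paper's: induction on $n$ from the $A_4^{(2)}$ base case (Theorem \ref{aquattrobuz}), the two embeddings $\bar\psi,\tilde\psi$ of Definition \ref{injectivemaps}, $\tau_n$-transport to reach the spanning roots and the vectors $(\frac{1}{2}X^\pm_{n,2r+1})^{(k)}=\tau_n((\frac{1}{2}X^\pm_{n-1,2r+1})^{(k)})$, and the treatment of the Cartan--imaginary part and of the cross-commutations along the lines of Lemma \ref{cartantutta} and Propositions \ref{menocartanpiu} and \ref{commuzeropiupiu}. However, your straightening step rests on a false premise, and that premise is exactly where the substance of the proof lies. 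You assert that every junction created by adjoining node $n$ is of type $A_2$ (or $A_1\times A_1$), hence governed by untwisted formulas, and that the only $B_2$-type relations are those at the $(1,2)$ junction, imported via $\bar\psi$. That is true of the Dynkin diagram but false of the root system. Take $\alpha=2\alpha_1+\alpha_2+\dots+\alpha_n\in\Phi_{0,m}^{+}$ and $\beta=\alpha_2+\dots+\alpha_n\in\Phi_{0,s}^{+}$: both involve node $n$, so the pair is not covered by $\bar\psi$, and $\alpha$ has coefficient $2$ on $\alpha_1$, so it is not covered by $\tilde\psi$; yet $\alpha+\beta=2(\alpha_1+\dots+\alpha_n)$ is twice a short root, so $\alpha+\beta+(2r+1)\delta\in\Phi_l^{re,+}$, and straightening $(x^+_{\alpha,r})^{(k)}(x^+_{\beta,s})^{(l)}$ produces the halved long-root vectors $\frac{1}{2}X^+_{n,2r+1}$ --- the twisted, $B_2$-type relation of Lemma \ref{commuplus}, iv), now located at node $n$, the far end of the diagram from node $1$. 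So the issue you defer at the end as ``bookkeeping'' is in fact the main gap, and as stated your local rank-two reduction fails on such pairs.

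The paper closes this gap by a different mechanism: simultaneous Weyl-group transport of pairs of roots. For each pair $\alpha,\beta\in\Phi_0^+$ whose sum can contribute a root (after adding a multiple of $\delta$), the paper exhibits, by a case analysis on the coefficients $a_1,b_1,a_n,b_n$, an element $w\in W_0$ (a product of $\sigma_n,\sigma_{n-1},\dots$) such that $w(\alpha)$ and $w(\beta)$ are \emph{both} supported on $\{1,\dots,n-1\}$; the required straightening then holds inside the $\bar\psi$-embedded copy of $A_{2(n-1)}^{(2)}$ by the induction hypothesis, and is carried back by the corresponding $\tau$'s, under which $\uz$ and $\buz^\pm$ are stable. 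For the pair above (when $j+1<n-1$) a single $\sigma_n$ suffices, sending $\alpha\mapsto2\alpha_1+\alpha_2+\dots+\alpha_{n-1}$ and $\beta\mapsto\alpha_2+\dots+\alpha_{n-1}$; in the boundary case one needs $\sigma_n\sigma_{n-1}$. If you insist on your local approach instead, you would have to prove the twisted straightening identities at node $n$ directly (the analogues of Lemma \ref{commuplus}, iii)--iv) with $\frac{1}{2}X^+_{n,2r+1}$), which amounts to the same transport computation; either way, this argument must be supplied, and it is the heart of the paper's proof.
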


We will prove Theorem \ref{main} by induction on $n$.
The claim for $n=2$ is the Subsection \ref{glue}. 
Since we should simultaneously work in this Section with different sets of indices, to emphasize the dependence on $n$ we will denote $I$ by $I_n$, namely $I_n=\{1,\dots, n\}$.
\begin{lemma}\label{mettiuno}
\begin{align}
\buz^{0}=\buz^{0,+}\otimes\buz^{\h}\otimes\buz^{0,-}\\
\muz^{0}=\muz^{0,+}\otimes\muz^{\h}\otimes\muz^{0,-}.\end{align} 
$\buz^{0}$ and $\muz^{0}$
are integral form of $\u^0$,
$\buz^{0,\pm}$ and $\muz^{0,\pm}$
are respectively an integral form of $\u^0$,
$\buz^{\h}$ and $\muz^{\h}$
are an integral form of $\u^\h$
and $\buz^{}\subseteq\uz$.
\end{lemma}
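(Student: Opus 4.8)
The statement concerns only the imaginary (Cartan) part $\u^0$, and the plan is to build $\buz^0$ and $\muz^0$ node by node, reducing to the rank-one pieces of Theorem \ref{bun} and the mixed form of Section \ref{mixedsection}. First I would record that for a fixed sign the generators attached to different nodes commute: since $[h_{i,r},h_{j,s}]=r\delta_{r+s,0}a_{i,j;r}\tfrac{2c}{d_j}$ vanishes whenever $r+s\neq 0$, the positive generators pairwise commute, so that
\begin{align}
\buz^{0,+}\cong\Z^{(mix)}[h_{1,r}\mid r>0]\otimes\bigotimes_{i=2}^{n}\Z^{(sym)}[h_{i,r}\mid r>0],
\end{align}
and symmetrically for $\buz^{0,-}$, while for $\muz^{0,\pm}$ the first factor is replaced by the polynomial form $\Z[\check h_{1,r}\mid \pm r>0]$. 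The special node $1$ (the node with $d_1=1$, carrying the $A_2^{(2)}$ copy) contributes the mixed form, whose integral-form property and the two explicit bases are exactly Theorems \ref{lambdabaseth} and \ref{polbasis}; each remaining node is an $A_1^{(1)}$ node and contributes Garland's symmetric form of Theorem \ref{bun}. This already exhibits $\buz^{0,\pm}$ and $\muz^{0,\pm}$ as free $\Z$-modules whose $\Q$-spans fill out $\u^{0,\pm}$.

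Next I would prove the triangular factorization $\buz^0\cong\buz^{0,-}\otimes\buz^\h\otimes\buz^{0,+}$, and likewise for $\muz^0$, by straightening positive generators past negative ones. The within-node relations are those of Proposition \ref{muzubuzubo}, and the cross-node relations are the rank-$n$ analogues of Proposition \ref{menocartanpiu}, produced from \cite{DP} in the same manner; schematically
\begin{align}
\hat h_i^+(u)\,\hat h_j^-(v)=\hat h_j^-(v)\,(1-uv)^{m_{ij}c}\,\hat h_i^+(u),
\end{align}
with integers $m_{ij}$ read off from the Cartan data, and corresponding formulas for $\check h$ and $\bar h$. The decisive point is that each correction factor $(1-uv)^{m_{ij}c}$ expands into coefficients lying in $\Z^{(bin)}[h_{i,0},c]=\buz^\h$ (respectively $\Z^{(bin)}[h_{i,0},\tfrac c2]=\muz^\h$); hence every product of generators straightens into $\buz^{0,-}\buz^\h\buz^{0,+}$. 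Since a tensor product of integral forms is an integral form, this yields that $\buz^0$ and $\muz^0$ are integral forms of $\u^0$, with $\buz^\h,\muz^\h$ integral forms of $\u^\h$ by construction.

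For arbitrary $n$ I would run the announced induction, the base $n=2$ being Subsection \ref{glue}. Through the embeddings of Definition \ref{injectivemaps}, the subalgebra on nodes $\{1,\dots,n-1\}$ is identified via $\bar\psi$ with $A_{2(n-1)}^{(2)}$ (covered by the inductive hypothesis) and the subalgebra on nodes $\{2,\dots,n\}$ is identified via $\tilde\psi$ with the untwisted $A_{n-1}^{(1)}$ (covered by Garland); the node-wise factorization of the first paragraph glues these two overlapping descriptions, the only genuinely new interaction being that between nodes $1$ and $2$, which is precisely the $A_4^{(2)}$ computation of Subsection \ref{glue}. Finally the inclusion $\buz\subseteq\uz$ follows exactly as in Subsection \ref{glue}: the elements $(\tfrac12 X_{1,2r+1}^\pm)^{(k)}$ already lie in $\uz$ because $\uz$ is $\tau_i$-stable and the braiding identities of Lemma \ref{commuplus}, in their rank-$n$ form, express the relevant divided powers through those of the $x_{i,r}^\pm$.

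I expect the main obstacle to be the integrality of the cross-node straightening coefficients: one must verify that, after expanding $(1-uv)^{m_{ij}c}$ and collecting terms, the resulting operators are genuine integral binomials in the $h_{i,0}$ and $c$. This is where the normalization $\tfrac{2c}{d_j}$ in the bracket and the distinction between $\buz^\h=\Z^{(bin)}[h_0,c]$ and $\muz^\h=\Z^{(bin)}[h_0,\tfrac c2]$ become essential, since an error of a single factor of two at the special node $1$ would already destroy the $\Z$-structure.
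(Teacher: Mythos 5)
Your proposal is correct and takes essentially the same route as the paper: the paper's own (one-sentence) proof invokes precisely the embeddings $\bar\psi$ and $\tilde\psi$ of Definition \ref{injectivemaps} together with the observation that $h_{i,r}$ and $h_{j,s}$ commute whenever $\vert i-j\vert\neq 1$, which is exactly the skeleton of your third paragraph (inductive hypothesis on nodes $\{1,\dots,n-1\}$, Garland on nodes $\{2,\dots,n\}$, with the node $1$--node $2$ interaction already settled in Subsection \ref{glue}). Your node-wise tensor decomposition and the explicit cross-node straightening formulas merely spell out the details that the paper's terse proof leaves implicit.
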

\begin{proof}
The claim follows from the embedding maps \eqref{eq:psi} and \eqref{eq:tildepsi} observing that $h_{i,r}$ and $h_{j,s}$ commute if $\vert i-j\vert\neq1$. 
\end{proof}

\begin{proposition}\label{mettidue}
 \begin{align}
     \buz^\pm \uz^{0}=\uz^{0} \buz^\pm
 \end{align}
\end{proposition}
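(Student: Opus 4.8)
The plan is to prove the one-sided inclusion $\buz^+\uz^0\subseteq\uz^0\buz^+$ and then recover the full statement by symmetry, reducing everything to the nearest-neighbour commutations already available in the rank one and rank two blocks. I would write $\uz^0=\buz^{0,-}\otimes\buz^\h\otimes\buz^{0,+}$ as in Lemma \ref{mettiuno}, so that $\uz^0$ is generated as a $\Z$-algebra by $\buz^\h=\Z^{(bin)}[h_{i,0},c\mid i\in I]$ together with the coefficients of the series $\hat h_i^\pm(u)$ ($i\in I$) and $\bar h_1^\pm(u)$, while $\buz^+$ is generated (after Lemma \ref{commuplus}, Theorem \ref{buzpiudue} and their higher-rank analogues) by the divided powers $(x_{i,r}^+)^{(k)}$ and $(\frac{1}{2}X_{1,2r+1}^+)^{(k)}$, all remaining root vectors $(x_{\alpha,r}^+)^{(k)}$ and $(\frac{1}{2}X_{i,2r+1}^+)^{(k)}$ lying in the subalgebra these generate. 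It then suffices to move one generator of $\buz^+$ across one generator of $\uz^0$. Once $\buz^+\uz^0\subseteq\uz^0\buz^+$ is established, the $\sigma$-stability of both $\buz^+$ and $\uz^0$ (Proposition \ref{stbuzmuz}) yields the reverse inclusion, and applying the antiautomorphism $\Omega$ (which exchanges $\buz^+\leftrightarrow\buz^-$ and fixes $\uz^0$) transports the equality to the sign $-$.

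The Cartan factor is immediate: exactly as in Lemma \ref{cartantutta}, the identities
\begin{align}
(x_{i,r}^+)^{(k)}\binom{h_{0,j}}{l}&=\binom{h_{0,j}-a_{i,j}k}{l}(x_{i,r}^+)^{(k)},\\
(\tfrac{1}{2}X_{1,2r+1}^+)^{(k)}\binom{h_{0,2}}{l}&=\binom{h_{0,2}-2k}{l}(\tfrac{1}{2}X_{1,2r+1}^+)^{(k)}
\end{align}
(the second via \eqref{eq:formulaE}) give $\buz^+\buz^\h=\buz^\h\buz^+$.

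The core of the proof is to commute a real root generator past the imaginary series $\hat h_i^\pm(u)$ and $\bar h_1^\pm(u)$, and here I would combine locality with the embeddings of Definition \ref{injectivemaps}. Because $[h_{i,s},x_{j,r}^\pm]=\pm a_{i,j;s}x_{j,r+s}^\pm$ and the Cartan matrix of Definition \ref{aquattrodef} is tridiagonal, a generator attached to node $j$ commutes outright with the imaginary series attached to node $i$ whenever $|i-j|>1$, so only adjacent pairs $|i-j|\le 1$ require work. For each such pair I would note that the node-$j$ generator together with the imaginary generators at node $i$ lie inside the image of one embedding: the nodes $\{1,\dots,n-1\}$ form a copy of $A_{2(n-1)}^{(2)}$ under $\bar\psi$, and the nodes $\{2,\dots,n\}$ form a copy of $A_{n-1}^{(1)}$ under $\tilde\psi$. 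These ranges overlap in $\{2,\dots,n-1\}$ and jointly contain every pair with $|i-j|\le1$, so each individual commutation takes place inside one of them: in the $A_{2(n-1)}^{(2)}$ block it is the induction hypothesis, while in the $A_{n-1}^{(1)}$ block it is the untwisted case, where $\uz=\uz^{K.M.}$ by Theorem \ref{comparison} and the straightening is Garland's (Theorem \ref{KMintegral}). The resulting integral power-series dressings are precisely those recorded in Propositions \ref{zeropiubarra} and \ref{commuzeropiupiu} for the relevant rank one or rank two block; in particular $(\frac{1}{2}X_{1,2r+1}^+)^{(k)}$, which interacts only with nodes $1$ and $2$, is always handled inside the $A_{2(n-1)}^{(2)}$ block.

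Finally I would assemble the local moves. Commuting a single generator of $\buz^+$ past the product of imaginary series leaves those series unchanged on the left and produces on the right an integral power series whose coefficients are again generators of $\buz^+$ (each move only shifts the loop index and multiplies by integers, and the node-$j$ generator stays attached to node $j$), the imaginary series at distinct nodes commuting among themselves. Iterating over the finitely many factors of a given element and combining with the Cartan step gives $\buz^+\uz^0\subseteq\uz^0\buz^+$, after which the symmetry argument of the first paragraph completes the proof. The main obstacle is the junction between the two embeddings: one must verify that no relation couples the special node $1$ — the one carrying the extra generators $\frac{1}{2}X_{1,2r+1}^\pm$ — directly to node $n$. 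This is exactly what the tridiagonal Cartan matrix forbids, confining every interaction of node $1$ to nodes $1$ and $2$, both of which sit inside the $A_{2(n-1)}^{(2)}$ block treated by the induction hypothesis.
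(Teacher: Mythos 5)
Your proposal is correct, but it is organized differently from the paper's (very terse) proof. The paper disposes of Proposition \ref{mettidue} in one line — ``the proof is the same as Proposition \ref{cartantutta}'' — meaning: commute each generator $(x_{i,r}^{\pm})^{(k)}$, $(\tfrac{1}{2}X_{1,2r+1}^{\pm})^{(k)}$ of $\buz^{\pm}$ past each generator of $\uz^{0}$ using the explicit dressing formulas imported from \cite{DP} (Propositions 2.4 and 2.14), exactly as in Lemma \ref{cartantutta} and Propositions \ref{zeropiubarra} and \ref{commuzeropiupiu}; these formulas are uniform in $n$ because they depend only on the Cartan integers $a_{i,j;r}$, which take the same values as in ranks $1$ and $2$, so no induction is needed for this step. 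You instead justify the adjacent-pair commutations by locality of the tridiagonal Cartan matrix together with the embeddings $\bar\psi$, $\tilde\psi$ of Definition \ref{injectivemaps} and the induction hypothesis (for the twisted block) or the untwisted theory (for the $A_{n-1}^{(1)}$ block) — which is precisely the device the paper itself uses to prove Lemma \ref{mettiuno} — and your symmetry step ($\sigma$ for the reverse inclusion, $\Omega$ for the minus sign) is the paper's standard trick. Both routes rest on the same local data; what yours buys is that no commutation formula has to be re-derived at rank $n$, and what it costs is the block-coverage check (every pair with $\vert i-j\vert\le 1$ must lie in $\{1,\dots,n-1\}$ or in $\{2,\dots,n\}$), which holds only for $n\ge 3$, so the rank-$2$ case of Subsection \ref{glue} must be taken as the base, as you implicitly do. One small caution: citing Theorem \ref{KMintegral} alone for the untwisted straightening gives the PBW ordering with real factors to the \emph{left} of imaginary ones, i.e.\ the opposite of the inclusion $\buz^{+}\uz^{0}\subseteq\uz^{0}\buz^{+}$ you are proving; this is harmless, since your $\sigma$-argument (or the explicit formulas of Proposition \ref{commuzeropiupiu}, which you also cite) converts one ordering into the other, but it deserves to be said explicitly.
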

\begin{proof} The proof is the same as Proposition \ref{cartantutta}.
\end{proof}

\begin{remark}
It follows from induction hypothesis that 
\begin{align}
\buz^\pm\subseteq\uz.  
\end{align} From Proposition \ref{mettidue} it follows that the claim of Theorem \ref{main} is equivalent to prove that  $\buz$ is an integral form of $\bu$ and a $\Z$ basis is given by the ordered monomials in the set 
\begin{align}
 \{(x_{\alpha,r}^\pm)^{(k)}, (\frac{1}{2}X_{i,2r+1}^{\pm})^{(k)}\mid r\in \Z, \alpha\in \Phi_0^+,i\in I \}.   
\end{align}Let us remark that we can restrict our consideration to only the $+$ case, the other will follow by applying $\Omega$.
\end{remark}
\begin{proposition}
\begin{align}
(x^+_{\alpha, r})^{(k)},(\frac{X^+_{i, 2r+1}}{2})^{(k)}\in\uz.   
\end{align} for any $\alpha\in \Phi_0^+ $, $i\in I$, $r\in\Z$ and $k\in \N$.
\end{proposition}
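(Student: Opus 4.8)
The plan is to argue by induction on $n$, the base case $n=2$ being exactly the content of Subsection \ref{glue} (in particular the Corollary following Lemma \ref{commuplus}). Two structural facts will drive the induction step. First, each $\tau_i$ ($i\in I_0$) is an algebra automorphism of $\u$ and hence commutes with divided powers, $\tau_i(y^{(k)})=(\tau_i(y))^{(k)}$; second, $\uz$ is $\tau_i$-stable for every $i\in I_0$. Granting these, the two families of elements in the statement are handled separately, and in both cases membership reduces to transporting Drinfeld-generator divided powers by the $\tau_i$'s and by the embedding $\bar\psi$ of Definition \ref{injectivemaps}.

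For the real root vectors $x^+_{\alpha,r}$ with $\alpha\in\Phi_0^+$ I would invoke Definition \ref{rootsvectors} directly: fixing the reduced expression of the longest element $w_0$, every $\alpha\in\Phi_0^+$ is written as $\alpha=\sigma_{j_1}\cdots\sigma_{j_{l-1}}\alpha_{j_l}$, and $x^+_{\alpha,r}=\tau_{j_1}\cdots\tau_{j_{l-1}}(x^+_{j_l,r})$. Since $(x^+_{j_l,r})^{(k)}$ is a defining generator of $\uz$ and the $\tau$'s both preserve $\uz$ and commute with divided powers, $(x^+_{\alpha,r})^{(k)}=\tau_{j_1}\cdots\tau_{j_{l-1}}\big((x^+_{j_l,r})^{(k)}\big)\in\uz$. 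This disposes of all short and medium positive roots simultaneously, with no case distinction.

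For the halved long-root vectors $(\frac{1}{2}X^+_{i,2r+1})^{(k)}$ the key reduction is to the node $i=1$. By Definition \ref{xgranditau} one has $X^+_{i,2r+1}=\tau_i\cdots\tau_2(X^+_{1,2r+1})$, so once $(\frac{1}{2}X^+_{1,2r+1})^{(k)}\in\uz$ is known, $\tau$-stability immediately gives $(\frac{1}{2}X^+_{i,2r+1})^{(k)}=\tau_i\cdots\tau_2\big((\frac{1}{2}X^+_{1,2r+1})^{(k)}\big)\in\uz$ for all $i$. To secure the case $i=1$ when $n\ge3$ I would pass through $\bar\psi:A_{2(n-1)}^{(2)}\hookrightarrow A_{2n}^{(2)}$: it carries Drinfeld generators to Drinfeld generators, hence $\bar\psi(\uz(A_{2(n-1)}^{(2)}))\subseteq\uz$, and the induction hypothesis applied with $n-1\ge2$ already places $(\frac{1}{2}X^+_{1,2r+1})^{(k)}$ in $\uz(A_{2(n-1)}^{(2)})$, whence it transports into $\uz$.

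The genuine obstacle lies not in the induction step, which is purely formal, but in the half-integral coefficient at the base case: in $A_2^{(2)}$ alone $(\frac{1}{2}X^+_{1,2r+1})^{(k)}$ lies in $\buz\setminus\uz$, so the factor $\frac{1}{2}$ only becomes integral through the interaction with node $2$. This is precisely what Subsection \ref{glue} establishes — the straightening identity of Lemma \ref{commuplus}(iv) produces the term $-\frac{1}{2}X^+_{2,r+s}$ from a commutator of $x^+_{2}$ and $x^+_{2\alpha_1+\alpha_2}$, placing $(\frac{1}{2}X^+_{2,2r+1})^{(k)}$ in $\uz$, after which $\tau_2$-invariance yields $(\frac{1}{2}X^+_{1,2r+1})^{(k)}\in\uz$. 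For $n\ge3$ this subtlety has been absorbed into the induction hypothesis, so the argument there requires only the two formal tools above.
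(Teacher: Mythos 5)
Your proof is correct, and it splits into a half that matches the paper and a half that does not. For the halved long root vectors your route is essentially the paper's: both run the induction on $n$ with base case Subsection \ref{glue}, transport $(\tfrac{1}{2}X^+_{\cdot,2r+1})^{(k)}$ through the embedding $\bar\psi$ of Definition \ref{injectivemaps} using the induction hypothesis, and finish with $\tau$-stability (the paper pulls all $i<n$ through $\bar\psi$ and reaches $i=n$ by a single $\tau_n$, you pull only $i=1$ through and reach every $i>1$ by $\tau_i\cdots\tau_2$ --- a cosmetic difference). For the real root vectors, however, you take a genuinely different and shorter route: instead of the paper's case analysis --- roots with $a_1=0$ or $a_n=0$ handled by the embeddings \eqref{eq:psi}, \eqref{eq:tildepsi} together with the induction hypothesis, and the remaining roots $\alpha_1+\dots+\alpha_n$ and $2\alpha_1+\dots+2\alpha_j+\alpha_{j+1}+\dots+\alpha_n$ reached by the explicit reflections $\sigma_n$ and $\sigma_{n-1}\sigma_n$ --- you observe that by Definition \ref{rootsvectors} every $x^+_{\alpha,r}$ is by construction $\tau_{j_1}\cdots\tau_{j_{l-1}}(x^+_{j_l,r})$ for some sequence of indices in $I$, so its divided powers lie in $\uz$ by $\tau_i$-stability alone, with no induction and no case distinction; this is legitimate because the reduced expression of $w_0$ sweeps out all of $\Phi_0^+$, and membership in $\uz$ is insensitive to the sign ambiguity in the choice of reduced expression. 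Both arguments rest on the same facts, asserted rather than proved in the paper, that $\uz$ is $\tau_i$-stable for $i\in I$ and that algebra automorphisms commute with divided powers. What your shortcut forgoes is only contextual: the paper's bookkeeping of which roots live in the embedded copies of $A_{2(n-1)}^{(2)}$ and $A_{n-1}^{(1)}$, and how the missing ones are reached by one or two simple reflections, is precisely the apparatus re-used in the theorem that follows (showing products $(x^+_{\alpha,r})^{(k)}(x^+_{\beta,s})^{(l)}$ stay in $\uz^+$), so your cleaner proof of the Proposition does not set up that later step, while the paper's does.
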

\begin{proof}
From the induction hypothesis, the  embedding map and \eqref{eq:tildepsi} an the $\tau_i$ stability of $\buz^+$ it follows that
\begin{align}
(\frac{X^+_{i, 2r+1}}{2})^{(k)}\in\uz.   
\end{align} for any $i\in I$, $r\in\Z$ and $k\in \N$, observing that \begin{align}
    (\frac{X^+_{n, 2r+1}}{2})^{(k)}=\tau_n((\frac{X^+_{n-1, 2r+1}}{2})^{(k)}).
\end{align}
Given $\alpha\in\Phi^+_0$, $\alpha=\sum_i a_i \alpha_i$, if $a_1=0$ or $a_n=0$, from the induction hypothesis and the embedding maps \eqref{eq:psi} and  \eqref{eq:tildepsi} it follows that 
\begin{align}
(x^+_{\alpha, r})^{(k)}\in\uz,   
\end{align} $r\in\Z$ and $k\in \N$.
Thus we can restrict to prove that:  
\begin{align}
(x^+_{\alpha, r})^{(k)}\in\uz   
\end{align} 
then $\forall k\in \N$, $r\in \Z$  and
\begin{align}
 \alpha\in\{\alpha_1+\dots+\alpha_n,2\alpha_1+\dots+2\alpha_j+2\alpha_{j+1}+\dots+\alpha_n\mid 1\le j<n\}.
\end{align}
Let us notice that 
\begin{align}
&\sigma_n(\alpha_1+\dots+\alpha_{n-1})=\alpha_1+\dots+\alpha_{n};\\
&\sigma_n(2\alpha_1+\dots+2\alpha_j+2\alpha_{j+1}+\dots+\alpha_{n-1})=2\alpha_1+\dots+2\alpha_j+2\alpha_{j+1}+\dots+\alpha_{n} \text{ if } j\neq n-1;\\
&\sigma_{n-1}\sigma_n(2\alpha_1+\dots+2\alpha_{n-2}+\alpha_{n-1})=\sigma_{n-1}(2\alpha_1+\dots+2\alpha_{n-2}+\alpha_{n-1}+\alpha_{n})=
2\alpha_1+\dots+2\alpha_{n-1}+\alpha_{n}.
\end{align}
thus 
the claim follows.
\end{proof}
\begin{theorem}
 \begin{align}
 (x^+_{\alpha, r})^{(k)}(x^+_{\beta, s})^{(l)}\in \uz^+,\label{eq:equaA}\\ 
 (x^+_{\alpha, r})^{(k)}(\frac{X^+_{i, 2s+1}}{2})^{(l)}\in \uz^+,\label{eq:equaB}\\  
 \end{align} 
 $\forall\alpha, \beta\in \Phi_0^+ $, $i\in I$, $r\in\Z$ and $k,l\in \N$.
\end{theorem}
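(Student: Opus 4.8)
The plan is to prove that products of the generators of $\buz^+=\uz\cap\u^+$ straighten into $\Z$-combinations of ordered monomials, i.e. that $\buz^+$ is closed under multiplication, by localising each product inside a rank $\le 2$ sub-configuration where the relations are already known. I would use two structural facts. First, by the preceding Proposition every $(x^+_{\alpha,r})^{(k)}$ and every $(\tfrac12 X^+_{i,2r+1})^{(k)}$ already lies in $\uz$. Second, $\uz$ is stable under all the $\tau_i$ (hence under $\tau_w$ for $w\in W_0$) and contains the images of the embeddings $\bar\psi$ and $\tilde\psi$ of Definition~\ref{injectivemaps}. The decisive observation is that when one straightens $(x^+_{\alpha,r})^{(k)}(x^+_{\beta,s})^{(l)}$, every commutator that appears is a divided power of a root vector attached to a root $m\alpha+m'\beta\in\Phi_0^+$; all such roots lie in the closed rank $\le 2$ subsystem $\Phi_{\alpha,\beta}:=\Phi_0\cap(\N\alpha+\N\beta)$, so the entire computation takes place inside the subalgebra generated by the root vectors of $\Phi_{\alpha,\beta}$ together with all their $\delta$-shifts.

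I would then split according to the type of $\Phi_{\alpha,\beta}$. If $\alpha+\beta\notin\Phi_0$ the subsystem is of type $A_1\times A_1$, the relevant commutators vanish, the two factors commute, and the product is already an ordered monomial. Otherwise $\Phi_{\alpha,\beta}$ is of type $A_2$ or $B_2$, and I would carry it by a suitable $w\in W_0$ onto a standard rank-$2$ parabolic $\Phi_J$, using the $\tau_w$-stability of $\uz$ to transport the straightening identity. Since reflections in $W_0$ preserve root lengths, an $A_2$ subsystem consists only of long roots and can be moved into some $\Phi_{\{i,i+1\}}$ with $2\le i\le n-1$, i.e. into the image of $\tilde\psi$; there the claim is the integral form of the untwisted $A_{n-1}^{(1)}$, which is Garland's theorem (Theorem~\ref{comparison} together with Theorem~\ref{KMintegral}), while the overlapping range $i+1\le n-1$ lies in the image of $\bar\psi$ and is covered by the induction hypothesis for $A_{2(n-1)}^{(2)}$. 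A $B_2$ subsystem involves the short simple root and can be moved into $\Phi_{\{1,2\}}$, i.e. into the $A_4^{(2)}$ configuration of Subsection~\ref{glue}, where the straightening relations \eqref{eq:commu1}--\eqref{eq:commu4} and the basis statement Theorem~\ref{buzpiudue} give the claim directly.

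For equation \eqref{eq:equaB} I would observe that the long real-root vectors $X^+_{i,2s+1}$ are, by Definition~\ref{xgranditau} and Lemma~\ref{lemmatauuno}, $\tau$-images of $[x^+_{1,r},x^+_{1,s'}]$ and interact nontrivially only through the $B_2$-type pairs, so the same $\tau_w$-reduction to $\Phi_{\{1,2\}}$ applies and Lemma~\ref{commuplus}(iii)--(iv) with Theorem~\ref{buzpiudue} close the argument. I expect the main obstacle to be twofold. First, the bookkeeping of the reduction: one must choose $w$ so that $w\alpha$ and $w\beta$ remain positive and land \emph{simultaneously} in a standard parabolic of the asserted type, i.e. $w(\Phi_{\alpha,\beta}^+)=\Phi_J^+$, and one must check that $\tau_w$ sends the divided-power generators to signed divided-power generators of the embedded subalgebra without disturbing integrality or the $\delta$-grading. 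Second, and more essentially, all the genuine arithmetic content is concentrated in the $B_2$ base case: it is precisely the $\tfrac12$-normalisation of $X^\pm_{1,2r+1}$ in Lemma~\ref{commuplus} that forces the straightening coefficients to be integral, which is also exactly what compels the passage from $\uz^\pm$ to the larger $\buz^\pm$.
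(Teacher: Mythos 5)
Your reduction rests on the claim that every commutator arising in straightening $(x^+_{\alpha,r})^{(k)}(x^+_{\beta,s})^{(l)}$ is attached to a root $m\alpha+m'\beta\in\Phi_0^+$, so that $\alpha+\beta\notin\Phi_0$ forces the subsystem $\Phi_{\alpha,\beta}=\Phi_0\cap(\N\alpha+\N\beta)$ to be of type $A_1\times A_1$ and the two factors to commute. In $A_{2n}^{(2)}$ this is false, and it fails exactly where the theorem has its content: by \eqref{eq:affinerootsystem} and Notation \ref{rootsystemadueennedue} the real affine roots include $2\gamma+(2m+1)\delta$ for every $\gamma\in\Phi_{0,s}^+$, and these are invisible to any subsystem computed inside $\Phi_0$. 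Concretely, in $A_4^{(2)}$ take $\alpha=\alpha_2$ and $\beta=2\alpha_1+\alpha_2$: then $\alpha+\beta=2(\alpha_1+\alpha_2)\notin\Phi_0$, so your trichotomy routes this pair to the ``trivially commute'' branch, yet $[x^+_{2,r},x^+_{2\alpha_1+\alpha_2,s}]=\pm\tfrac{1}{2}X^+_{2,r+s}\neq 0$ for $r+s$ odd, by \eqref{eq:commu4} of Lemma \ref{commuplus}. The same failure occurs for every pair $\alpha=\alpha_{j+1}+\cdots+\alpha_i$, $\beta=2\alpha_1+\cdots+2\alpha_j+\alpha_{j+1}+\cdots+\alpha_i$ (whose sum is twice the short root $\alpha_1+\cdots+\alpha_i$), and for $\alpha=\beta\in\Phi_{0,s}^+$, which is the rank-one $A_2^{(2)}$ configuration of Section \ref{theothers} with $[x^+_r,x^+_s]=\pm(-1)^sX^+_{r+s}$, not a commuting $A_1\times A_1$. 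These pairs are unavoidable (straightening $(x^+_{1,r})^{(k)}(x^+_{2,s})^{(l)}$ via \eqref{eq:commu1} already creates the vectors $x^+_{2\alpha_1+\alpha_2,\cdot}$ sitting next to $x^+_{2,\cdot}$), and they are precisely the source of the divided powers of $\tfrac{1}{2}X^+_{i,\cdot}$, i.e.\ the reason $\buz^+$ exceeds the algebra generated by the $(x^+_{\alpha,r})^{(k)}$ alone; declaring them trivial deletes the substance of \eqref{eq:equaA} and makes \eqref{eq:equaB} pointless.

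The correct interaction criterion is whether $\alpha+\beta+k\delta\in\Phi$ for some $k\in\Z$, which is strictly weaker than $\alpha+\beta\in\Phi_0$, and this is how the paper argues. Its proof is an induction on $n$ whose case analysis is driven by the coefficients of $\alpha_1$ and $\alpha_n$ in $\alpha$ and $\beta$: pairs with $a_1+b_1>2$ or $a_n+b_n>2$ are discarded because $\alpha+\beta+k\delta\notin\Phi$ for all $k$, while each remaining interacting pair --- including the twice-a-short-root pairs, which appear as $a_1+b_1=2$, and the diagonal pair $\alpha=\beta=\alpha_1+\cdots+\alpha_n$ --- is moved by an explicit product of simple reflections $w$, with $w\alpha$ and $w\beta$ simultaneously positive and supported on $\alpha_1,\dots,\alpha_{n-1}$, into the $\bar\psi$-image of $A_{2(n-1)}^{(2)}$ of Definition \ref{injectivemaps}, where the induction hypothesis applies (base case $A_4^{(2)}$, Subsection \ref{glue}); for \eqref{eq:equaB} it additionally uses that $X^+_{n,2s+1}$ is central in $\u^+$. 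Your transport mechanism ($\tau_w$-stability of $\uz$) and your base cases (untwisted $A_2$ via $\tilde\psi$, and $B_2$ via Lemma \ref{commuplus}) are essentially the paper's; what must be repaired is the classification step: replace the finite trichotomy by the affine one, adding as a separate branch the case $\alpha+\beta=2\gamma$ with $\gamma\in\Phi_{0,s}^+$ (allowing $\alpha=\beta$), and reduce that branch by a Weyl move either into the smaller copy or onto the $A_4^{(2)}$ pair $(\alpha_2,2\alpha_1+\alpha_2)$.
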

\begin{proof}
Let $\alpha=\sum a_i \alpha_i$  and $\beta=\sum b_i \beta$.
be the decomposition of $\alpha$ and $\beta$ into simple roots.\\
Proof of Relation \eqref{eq:equaB}.
We will show that there exist $w\in W_0$, such that $w(\alpha)=\sum_{j=1}^{n-1}a_j\alpha_j$ and $w(2\sum_{j=1}^{i}\alpha_j)=2\sum_{j=1}^{l}\alpha_j$ with $l<n$, from which the claim follows from induction hypothesis.
\begin{itemize}
\item If $a_n=0$ and $i<n$ there is nothing to prove.\\
\item 
If $i=n$ Relation \eqref{eq:equaB} follows observing that $X^+_{i, 2s+1}$ is central in $\uz^+$.
\item 
If $a_n=1$ and $i<n$, let us notice that $\alpha+2\sum_{j=1}^i\alpha_j \in \Phi_0^{+}$
if  only if $\alpha=\alpha_{i+1}+\dots+\alpha_n$,thus
the claim follows observing that 
\begin{align}
&\sigma_n(2\sum_{j=1}^i\alpha_j)=2\sum_{j=1}^i\alpha_j  &\text{ if } i\neq n-1; \\
&\sigma_n(\alpha_{i+1}+\dots+\alpha_n) =\alpha_{i+1}+\dots+\alpha_{n-1} &\text{ if } i\neq n-1;\\
&\sigma_{n}\sigma_{n-1}\sigma_{n-2}(2\sum_{j=1}^{n-1}\alpha_j)=\sigma_{n}\sigma_{n-1}(2\sum_{j=1}^{n-2}\alpha_j)=\sigma_{n}(2\sum_{j=1}^{n-1}\alpha_j)=2\sum_{j=1}^{n}\alpha_j
; \\
&\sigma_{n}\sigma_{n-1}\sigma_{n-2}(\alpha_n)=\sigma_{n}(
\alpha_{n-1}+\alpha_n)=\alpha_{n-1}.
\end{align}
\end{itemize}
Proof of Relation \eqref{eq:equaA}.\\
We will show that there exist $w\in W_0$, such that $w(\alpha)=\sum_{j=1}^{l}a_j'\alpha_j$ and $w(\beta)=\sum_{j=1}^{k}b_j'\alpha_j$ with $l,k<n$, from which the claim follows from induction hypothesis.
If $a_1+b_1>2$ or $a_n+b_n>2$ then $\alpha+\beta+k\delta\not\in\Phi$ for any $k\in\Z$, the cases to consider are therefore the following.
\begin{itemize}
\item If $a_1=b_1=0$ or $a_n=b_n=0$ there is nothing to prove.
\item If 
$a_1=1$, $b_1=1$
$a_n=1$, $b_n=1$,  
that is $\alpha=\beta=\alpha_1+\dots+\alpha_n$ 
then $\sigma_n(\alpha)=\sigma_n(\beta)=\alpha_1+\dots+\alpha_{n-1}$.
\item If $a_1=2$, $b_1=0$
$a_n=1$, $b_n=1$, that is 
$\alpha=2\alpha_1+\dots+2\alpha_j+\alpha_{j+1}+\dots+\alpha_n$ and $\beta=\alpha_r+\dots+\alpha_n$
thus 
$\alpha+\beta\in \Phi^+_0$ if and only 
$r=j+1$, that is 
$\alpha=2\alpha_1+\dots+2\alpha_j+\alpha_{j+1}+\dots+\alpha_n$ and $\beta=\alpha_{j+1}+\dots+\alpha_n$

\begin{itemize}
    \item 
If $j+1=n-1$
that is 
$\alpha=2\alpha_1+\dots+2\alpha_{n-1}+\alpha_n$ 
then $\sigma_{n}
\sigma_{n-1}(\alpha)=\sigma_n(2\alpha_1+\dots+2\alpha_{n-2}+\alpha_{n-1}+\alpha_n)=2\alpha_1+\dots+2\alpha_{n-2}+\alpha_{n-1}$
and 
$\sigma_{n}
\sigma_{n-1}(\beta)=\sigma_n(\alpha_{n-1}+\alpha_n)=\alpha_{n-1}$.
\item 
if $j+1<n-1$
then 
$\sigma_{n}(\alpha)=\sigma_n(2\alpha_1+\dots+2\alpha_{j+1}+\alpha_{j}+\dots+\alpha_n)=2\alpha_1+\dots+2\alpha_{j+1}+\alpha_{j}+\dots+\alpha_{n-1}$ and
$\sigma_{n}
(\beta)=\sigma_n(\alpha_{j+1}+\dots+\alpha_n)=\alpha_{j+1}+\dots+\alpha_{n-1}$.
\end{itemize}

\item If 
$a_1=1$, $b_1=0$
$a_n=1$, $b_n=1$, that is 
$\alpha=\alpha_1+\dots+\alpha_n$ and
$\beta=\alpha_r+\dots+\alpha_n$
then
$\alpha+\beta+k\delta\not \in \Phi_0$  $\forall k\in \Z$.
\item If 
$a_1=2$, $b_1=0$
$a_n=1$, $b_n=0$ , that is $\alpha=2\alpha_1+\dots+2\alpha_j+\alpha_{j+1}+\dots+\alpha_n$ and $\beta=\alpha_r+\dots+\alpha_s$, then 
$\alpha+\beta\in \Phi^+_0$ if and only 
$r=j+1$.
Let us assume then $r=j+1$, that is 
$\alpha=2\alpha_1+\dots+2\alpha_j+\alpha_{j+1}+\dots+\alpha_n$ and
$\beta=\alpha_{j+1}+\dots+\alpha_s$.

\item If 
$a_1=2$, $b_1=0$
$a_n=0$, $b_n=1$ , that is $\alpha=2\alpha_1+\dots+2\alpha_j+\alpha_{j+1}+\dots+\alpha_k$ and $\beta=\alpha_r+\dots+\alpha_n$, then $\alpha+\beta\in \Phi^+_0$ if and only $r=j+1$, that is 
$\alpha=2\alpha_1+\dots+2\alpha_j+\alpha_{j+1}+\dots+\alpha_k$ and $\beta=\alpha_{j+1}+\dots+\alpha_n$

\item If 
$a_1=1$, $b_1=0$
$a_n=0$, $b_n=1$, that is
$\alpha=\alpha_1+\dots+\alpha_j$ and
$\beta=\alpha_r+\dots+\alpha_n$, then
$\alpha+\beta\in \Phi^+_0$ if and only
$r=j+1$, that is
$\alpha=\alpha_1+\dots+\alpha_j$ and
$\beta=\alpha_{j+1}+\dots+\alpha_n$.
\begin{itemize}
    \item If $j+1=n$ then
    $\sigma_{n}\sigma_{n-1}(\alpha)=\alpha_1+\dots+\alpha_{n-2}$ and
$\sigma_{n}\sigma_{n-1}(\beta)=\alpha_{n-1}$.
\item  If $j+1\neq n$ then
    $\sigma_{n}(\alpha)=\alpha$ and
$\sigma_{n}=\alpha_{j+1}+\dots+\alpha_{n-1}$.
\end{itemize}
\end{itemize}
\end{proof}

\begin{theorem}\label{theoremfinalepiu}
    $\buz^+$ and $\buz^-$ are integral form of $\tu^+$ and $\tu^-$, more precisely 
      a basis of $\buz^\pm$ is given by the divided powers of the elements of the set 
     a $\Z$-basis $B^{\pm}$
     $\{x^{\pm}_{\alpha
        ,r}, \frac{1}{2}X^{\pm}_{i
        ,2r+1}, \alpha\in \Phi_{0}^+, i \in I, r\in \Z\}$.
\end{theorem}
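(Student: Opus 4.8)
The plan is to prove everything for $\buz^+$ and then deduce the statement for $\buz^-$ by applying the antiautomorphism $\Omega$ of Definition \ref{aqautomor}. Since $\Omega$ interchanges $x^+_{i,r}\leftrightarrow x^-_{i,-r}$ and $X^+_{1,r}\leftrightarrow X^-_{1,-r}$ and commutes with divided powers (being an antiautomorphism, $\Omega(v^{(k)})=\Omega(v)^{(k)}$), it sends $\buz^+$ to $\buz^-$, $\tu^+$ to $\tu^-$, and, after reversing the monomial order, the proposed spanning set $B^+$ to $B^-$. Throughout I fix once and for all a convex total order on $\Phi^{re,+}=\Phi^{re,+}_s\cup\Phi^{re,+}_m\cup\Phi^{re,+}_l$ (Notation \ref{rootsystemadueennedue}), so that ``ordered monomial'' in the divided powers $(x^+_{\alpha,r})^{(k)}$ and $(\tfrac{1}{2}X^+_{i,2r+1})^{(k)}$ is unambiguous; I write $B^+$ for the set of these ordered monomials.

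First I would settle the rational picture. With the paper's convention, the positive real part $\mathfrak{n}^+$ of $A_{2n}^{(2)}$ is $\bigoplus_{\beta\in\Phi^{re,+}}\g_\beta$, each real root space $\g_\beta$ being one–dimensional, and $\tu^+=U(\mathfrak{n}^+)$. The vectors $x^+_{\alpha,r}$ ($\alpha\in\Phi^+_0$, $r\in\Z$) span the short and medium root spaces, while the $X^+_{i,2r+1}$, attached to the long positive real roots as in Notation \ref{rootsystemadueennedue}, span the long root spaces; together they form a basis of $\mathfrak{n}^+$. By the Poincar\'e--Birkhoff--Witt theorem the ordered monomials in these root vectors are a $\Q$-basis of $\tu^+$, and passing from a root vector to its divided power merely rescales by a nonzero rational, so $B^+$ is itself a $\Q$-basis of $\tu^+$. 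This yields at once the $\Z$-linear independence of $B^+$ and the equality $\Q\otimes_\Z\Z\text{-span}(B^+)=\tu^+$; it then remains only to identify $\Z\text{-span}(B^+)$ with $\buz^+$ to conclude that $\buz^+$ is free over $\Z$ with basis $B^+$, hence an integral form of $\tu^+$.

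The core of the argument is the equality $\buz^+=\Z\text{-span}(B^+)$, proved by two inclusions. For $B^+\subseteq\buz^+$ it suffices, $\buz^+$ being a ring, to show that each generator $(x^+_{\alpha,r})^{(k)}$ and $(\tfrac{1}{2}X^+_{i,2r+1})^{(k)}$ of $B^+$ lies in $\buz^+$: this is the content of the preceding Proposition, whose proof runs the induction on $n$ through the embeddings $\bar\psi$ of \eqref{eq:psi} and $\tilde\psi$ of \eqref{eq:tildepsi} and the $\tau_i$-stability, with base case the $A_4^{(2)}$ computation of Subsection \ref{glue}. For the reverse inclusion, $\buz^+$ is generated as a $\Z$-algebra by the $(x^+_{i,r})^{(k)}$ and the $(\tfrac{1}{2}X^+_{1,2r+1})^{(k)}$, all of which lie in $B^+$; so it is enough to prove that $\Z\text{-span}(B^+)$ is closed under multiplication. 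Closure is exactly the straightening supplied by the preceding Theorem: relations \eqref{eq:equaA} and \eqref{eq:equaB} assert that every product $(x^+_{\alpha,r})^{(k)}(x^+_{\beta,s})^{(l)}$ and $(x^+_{\alpha,r})^{(k)}(\tfrac{1}{2}X^+_{i,2s+1})^{(l)}$ straightens into a $\Z$-combination of ordered monomials. An induction on the number of disordered adjacent pairs (and on total degree) then reduces an arbitrary product of generators into $\Z\text{-span}(B^+)$; the residual reorderings involving two long-root divided powers, or a long-root past a short/medium one, are governed by the commutation rules among the $X$'s, which via the embeddings reduce to the rank-one relations where the long-root vectors are central in $\tu^+$.

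The step I expect to be the real obstacle is the integrality inside this straightening. When two short or medium root vectors are commuted and their roots sum to a long root $2\gamma+(2m+1)\delta$, the commutator produces an $X$-vector, and one must verify that after the normalization to $\tfrac{1}{2}X$ the structure constants remain in $\Z$ rather than merely in $\Q$; this is the precise phenomenon forcing $\buz^+$ to be strictly larger than the algebra generated by the $x$'s alone. For $n\ge 2$ this integrality is not re-proved but reduced, by the Weyl conjugations $\sigma_n,\sigma_{n-1}$ and the two embeddings of Definition \ref{injectivemaps}, to the $A_4^{(2)}$ straightening formulas of Lemma \ref{commuplus} and Theorem \ref{buzpiudue}. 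Once closure under multiplication is in hand, $\Z\text{-span}(B^+)=\buz^+$ is free with basis $B^+$ and has $\Q$-span $\tu^+$, i.e. is an integral form of $\tu^+$; applying $\Omega$ gives the corresponding statement for $\buz^-$ and completes the proof.
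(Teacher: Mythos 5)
Your proposal is correct and follows essentially the same route the paper intends: the paper states Theorem \ref{theoremfinalepiu} without a separate proof, leaving it as the assembly of the preceding results of Subsection \ref{ultimaspero} --- membership of all $(x^+_{\alpha,r})^{(k)}$ and $(\tfrac{1}{2}X^+_{i,2r+1})^{(k)}$ in the integral form (the preceding Proposition, via the embeddings \eqref{eq:psi}, \eqref{eq:tildepsi}, the $\tau_i$'s and induction on $n$ with base case $A_4^{(2)}$), the straightening relations \eqref{eq:equaA}--\eqref{eq:equaB} for closure of the $\Z$-span of ordered monomials, PBW over $\Q$ for linear independence, and the remark that the minus case follows by applying $\Omega$. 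Your write-up makes exactly this assembly explicit (your only loose point, the commutation of two long-root divided powers, is in fact easier than you suggest: $[X^+_{i,2r+1},X^+_{j,2s+1}]=0$ since the sum of the corresponding roots is never a root, so these factors reorder trivially), so there is no substantive deviation from the paper's argument.
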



\end{document}